\newtheorem{theorem}[equation]{Theorem}
\newtheorem{lemma}[equation]{Lemma}
\newtheorem{proposition}[equation]{Proposition}
\newtheorem{definition}[equation]{Definition}
\newtheorem{example}[equation]{Example}
\newtheorem{remark}[equation]{Remark}
\numberwithin{equation}{subsection}
\newcommand{\LL}{\mathbb{L}}
\newcommand{\TT}{\mathbb{T}}
\newcommand{\CC}{\mathbb{C}}
\newcommand{\bm}{\mathbf{m}}
\newcommand{\bu}{\mathbf{u}}
\newcommand{\bv}{\mathbf{v}}
\DeclareMathAlphabet{\matheur}{U}{eur}{m}{n}
\DeclareMathOperator{\Ker}{Ker} \DeclareMathOperator{\GL}{GL}
\DeclareMathOperator{\Mat}{Mat} 
\DeclareMathOperator{\End}{End} 
\DeclareMathOperator{\Spec}{Spec} 
 \DeclareMathOperator{\Res}{Res}
\DeclareMathOperator{\trdeg}{tr.deg} \DeclareMathOperator{\wt}{wt}
\DeclareMathOperator{\rank}{rank}
\DeclareMathOperator{\Span}{Span}
\newcommand{\oK}{\overline{K}}
\newcommand{\tomega}{\widetilde{\boldsymbol{\omega}}}
\newcommand{\bomega}{\boldsymbol{\omega}}
\newcommand{\tr}{\mathrm{tr}}
\newcommand{\ww}{\boldsymbol{w}}
\begin{document}

\title[]{{\large{O\MakeLowercase{n special values of meromorphic }D\MakeLowercase{rinfeld modular forms of arbitrary rank at }CM\MakeLowercase{ points}}}}

\author{Yen-Tsung Chen}
\address{Department of Mathematics, Pennsylvania State University, University Park, PA 16802, U.S.A.}

\email{ytchen.math@gmail.com}

\author{O\u{g}uz Gezm\.{i}\c{s}}
\address{Department of Mathematics, National Tsing Hua University, Hsinchu City 30042, Taiwan R.O.C.}
\email{gezmis@math.nthu.edu.tw}

\thanks{  }
\thanks{  }


\date{\today}

\begin{abstract} In the present paper, we introduce meromorphic Drinfeld modular forms of arbitrary rank equipped with a particular arithmeticity property. We also study their special values at CM points and show the algebraic independence of these values under some conditions. Our results may be seen as a generalization of Chang's results on the special values of arithmetic Drinfeld modular forms in the rank two setting.
    
\end{abstract}

\keywords{}

\maketitle

\section{Introduction}
\subsection{Elliptic modular forms at CM points}
    Let $f$ be an elliptic modular form of weight $\ell\in\mathbb{Z}_{\geq 0}$ for a congruence subgroup of $\mathrm{SL}_2(\mathbb{Z})$. Following the terminology of Shimura, we call $f$ \emph{an arithmetic modular form} if the Fourier expansion of $f$ has algebraic coefficients. An element $\alpha$ in the complex upper half plane $\mathcal{H}$ is called \emph{a CM point} if $\mathbb{Q}(\alpha)$ is an imaginary quadratic extension over $\mathbb{Q}$. Using the result of Shimura \cite[\S6.8]{Shi71}, if $f(\alpha)\neq 0$, then there is a CM elliptic curve $E_\alpha$ defined over the algebraic closure $\overline{\mathbb{Q}}$ of $\mathbb{Q}$ with $\End(E_\alpha)\otimes_{\mathbb{Z}}\mathbb{Q}\cong\mathbb{Q}(\alpha)$ and a non-zero period $\lambda_\alpha$ of $E_\alpha$ such that
    \begin{equation}\label{Eq:Shimura_Period}
        \frac{f(\alpha)}{\lambda_\alpha^\ell}\in\overline{\mathbb{Q}}^\times.
    \end{equation}
    In fact, the ratio $f(\alpha)/\lambda_\alpha^\ell$ lies in the finite extension of $\mathbb{Q}$, which is closely related to the class field theory for $\mathbb{Q}(\alpha)$. We refer the reader to \cite[\S6]{Shi71} for further details.

    As Shimura's result builds up a bridge between special CM values of arithmetic modular forms and periods of CM elliptic curves defined over $\overline{\mathbb{Q}}$, one can apply tools from the transcendence theory to investigate algebraic relations among values of arithmetic modular forms at CM points. More precisely, by the transcendence result of Schneider \cite{Sch37} together with \eqref{Eq:Shimura_Period}, it can be shown that $f(\alpha)$ is either zero or transcendental (cf.  \cite{GMR11}) provided that $j(\alpha)\in\overline{\mathbb{Q}}^\times$ where $j(\cdot):\mathcal{H}\to\mathbb{C}$ is the classical modular $j$-function. A generalization of this result to meromorphic modular forms has been established recently in \cite{BhPa25}. In addition, based on the result of Wolfart and W\"ustholz in \cite{WW85} and \eqref{Eq:Shimura_Period}, if the given arithmetic modular form $f$ is of weight $1$, then whenever $\alpha_1,\dots,\alpha_n\in\mathcal{H}$ are distinct CM points with $f(\alpha_i)\neq 0$, we have
    \[
        \dim_{\overline{\mathbb{Q}}}\mathrm{Span}_{\overline{\mathbb{Q}}}\{f(\alpha_1),\dots,f(\alpha_n)\}=n.
    \]

    Motivated by these classical results, the main theme of this article is two-fold. We first investigate an analogue of Shimura's result \eqref{Eq:Shimura_Period} to arithmetic Drinfeld modular forms of arbitrary rank (Theorem \ref{Intro:Special_Values_to_Periods}). Then we will determine algebraic relations among periods and quasi-periods of CM Drinfeld modules whose endomorphism algebras are Galois and linearly disjoint over the base field which, via Theorem \ref{Intro:Special_Values_to_Periods}, will lead to an algebraic independence result for special values of arithmetic Drinfeld modular forms at CM points (Theorem \ref{Thm:Intro}).
    
\subsection{Drinfeld modular forms at CM points}
    Let $A=\mathbb{F}_q[\theta]$ be the polynomial ring in variable $\theta$ over the finite field $\mathbb{F}_q$ of characteristic $p>0$. We set $K:=\mathbb{F}_q(\theta)$ to be the fraction field of $A$ and define the non-Archimedean absolute value $|\cdot|_\infty$ on $K$ by setting $|f/g|_\infty:=q^{\deg_\theta f-\deg_\theta g}$. Then we identify the completion of $K$ with respect to $|\cdot|_\infty$ with $K_\infty=\mathbb{F}_q(\!(\theta^{-1})\!)$ and set $\mathbb{C}_\infty$ to be the completion of an algebraic closure of $K_\infty$. We further let $\oK$ be the fixed algebraic closure of $K$ in $\mathbb{C}_{\infty}$.

    The theory of Drinfeld modular forms dates back to the Ph.D. thesis of D. Goss (see also \cite{Gos80}), where the analytical and algebraic theory of  Drinfeld modular forms of rank $2$ were introduced. Later, Gekeler made significant contributions to the theory in his series of works \cite{Gek84,Gek86,Gek88,Gek89} in the rank $2$ case. Beyond the rank $2$ setting, Basson, Breuer and Pink initiated a project \cite{BBP21} aiming to build up the foundation of the theory of Drinfeld modular forms of arbitrary rank $r\geq 2$ based on the algebraic Satake compactification of Drinfeld moduli spaces introduced by Pink \cite{Pin13} as well as the analytic Satake compactification established by H\"{a}berli \cite{Hab21}. At the same time, Gekeler established a series of works \cite{Gek17,Gek18,Gek19, Gek25} that also enlighten the development of the theory of Drinfeld modular forms of higher rank.

    For $r\geq 2$, we consider the Drinfeld upper half plane
    \[
        \Omega^r:=\mathbb{P}^{r-1}(\mathbb{C}_\infty)\setminus\{K_\infty\text{-rational }\mathrm{hyperplanes}\}.
    \]
    We may identify any element $\bomega\in\Omega^r$ with the column vector $(w_1,\dots,w_r)^\tr\in\Mat_{r\times 1}(\mathbb{C}_\infty)$ whose entries are $K_{\infty}$-linearly independent and normalized so that $w_r=1$. Note that each $\bomega=(w_1,\dots,w_r)^\tr\in\Omega^r$ induces \emph{an $A$-lattice}, that is a free $A$-module $\Lambda_{\bomega}=Aw_1+\cdots+Aw_r\subset\mathbb{C}_\infty$ of rank $r$ such that the intersection of $\Lambda_{\bomega}$ with any ball of finite radius in $\mathbb{C}_{\infty}$ is finite (see \S2.1 for more details). The endomorphism ring $\End(\Lambda_{\bomega}) $ of $\Lambda_{\bomega}$ is given by
    \[
        \End(\Lambda_{\bomega}):=\{c\in\mathbb{C}_\infty\mid c\Lambda_{\bomega}\subset\Lambda_{\bomega}\}\subset\mathbb{C}_\infty,
    \]
    which is a free $A$-module of rank $s\geq 1$ with $s\mid r$. We call $\bomega$ \emph{a CM point} if $\rank_A\End(\Lambda_{\bomega})=r$. It is known that $\Omega^r$ has a rigid analytic space structure (see \cite{BBP21,FvdP04} for more details). 
    
    For a congruence subgroup $\Gamma\subset\GL_r(A)$, \emph{a weak Drinfeld modular form of weight $k\in\mathbb{Z}$ and type $m\in\mathbb{Z}/(q-1)\mathbb{Z}$ for $\Gamma$} is a $\mathbb{C}_\infty$-valued rigid analytic function $F:\Omega^r\to\mathbb{C}_\infty$ satisfying the automorphy condition for elements in $\Gamma$. It is called \emph{a Drinfeld modular form of weight $k\in\mathbb{Z}$ and type $m\in\mathbb{Z}/(q-1)\mathbb{Z}$ for $\Gamma$} if it further satisfies a certain growth condition around the cups of $\Gamma$ (see \S2.2 for their precise definition). In fact, there is a parameter at infinity $u_\Gamma:\Omega^r\to\mathbb{C}_\infty$ such that any Drinfeld modular form $F$ has a unique \emph{$u_\Gamma$-expansion}, an analogue of the Fourier expansion. More precisely, we write
    \[
        F(\boldsymbol{\omega})=\sum_{n\geq 0}f_n(\widetilde{\boldsymbol{\omega}})u_\Gamma(\boldsymbol{\omega})^n,~\bomega=(w_1,\widetilde{\bomega}^\tr)^\tr\in\Omega^r
    \]
    where $f_n:\Omega^{r-1}\to\mathbb{C}_\infty$ is a uniquely determined rigid analytic function on $\Omega^{r-1}$ which can be regarded as an analogue of the Fourier coefficients for $F$ and $\tomega\in \Omega^r$ lies in some neighborhood of infinity. When $r=2$, we note that each $f_n$ is a constant in $\mathbb{C}_{\infty}$. 

    In the rank $2$ setting, one may naturally define arithmetic Drinfeld modular forms to be any Drinfeld modular form $F$ satisfying $f_n\in\oK$. Using the result of Gekeler \cite{Gek84,Gek86}, Chang \cite[Thm.~2.2.1]{Cha12} established an analogue of \eqref{Eq:Shimura_Period} for values of arithmetic Drinfeld modular forms at CM points. The first obstruction to investigating \eqref{Eq:Shimura_Period} in the case of $r\geq 3$ is that the $u_\Gamma$-coefficients $f_n$ are no longer elements in $\mathbb{C}_\infty$ but a $\mathbb{C}_\infty$-valued rigid analytic functions on $\Omega^{r-1}$. Thus, one may not have an immediate definition for an arithmetic Drinfeld modular form in the higher rank setting. To overcome this problem, we instead adopt the recursive definition initiated by Basson in his PhD thesis \cite{Bas14}. First, we note by \cite[Prop. 3.2.7]{Bas14} that,  the $u_\Gamma$-coefficients $f_n$ of a given Drinfeld modular form $F$ for $\Gamma\subset \GL_r(A)$ are in fact weak Drinfeld modular forms for some congruence subgroup $\widetilde{\Gamma}\subset\GL_{r-1}(A)$. Then we call $F$ \emph{arithmetic} if each $f_n:\Omega^{r-1}\to \mathbb{C}_{\infty}$ is an arithmetic weak Drinfeld modular form. We remark that, previously, this definition was used by Sugiyama \cite{Sug18} in his study of the integrality and congruence relations among Drinfeld modular forms for $\GL_r(A)$ (see Example \ref{Ex:2}). Furthermore, we call $F$ \emph{a meromorphic arithmetic Drinfeld modular form of weight $k$ for $\Gamma$} if it is a ratio of two arithmetic Drinfeld modular forms $f$ and $g$ for $\Gamma$ of weight $k+\ell\in\mathbb{Z}_{\geq 1}$ and $\ell\in\mathbb{Z}_{\geq 1}$ respectively.
    
    The first purpose in this article is to study meromorphic arithmetic Drinfeld modular forms of arbitrary rank for any congruence subgroup and their special values at CM points. Before stating our theorem, fixing a $(q-1)$-st root of $-\theta$, we define \emph{the Carlitz period} $\widetilde{\pi}$ by the infinite product
    \[
        \widetilde{\pi}:=-(-\theta)^{q/(q-1)}\prod_{i=1}^\infty\left(1-\theta^{1-q^i}\right)^{-1}\in\mathbb{C}_\infty^\times.
    \]

    Let $N\in A\setminus\mathbb{F}_q$ and $\mathcal{T}_N:=(N^{-1}A/A)^r\setminus\{0\}$. Let $F$ be an arithmetic Drinfeld modular form of weight $k$ for $\Gamma\supset\Gamma(N)$. We prove in Proposition~\ref{P:coefficients} that $F$ is integral over $\oK[E_{\bu}^{\mathrm{ari}}\mid\bu\in\mathcal{T}_\theta]$,
    where each $E_\bu^{\mathrm{ari}}$ is a particular arithmetic Drinfeld modular form of weight $1$ for $\Gamma(\theta)$ (see \S3 for more details). Lastly, given $\alpha,\beta\in\mathbb{C}_\infty$, we further denote by $\alpha\sim\beta$ if $\alpha/\beta\in\oK$. Our first result, which will be restated in Theorem~\ref{Thm:Special_Values_to_Periods} later, is given as follows.
    \begin{theorem}\label{Intro:Special_Values_to_Periods}
        Let $F$ be a meromorphic arithmetic Drinfeld modular form of non-zero weight $k$ for a congruence subgroup $\Gamma$ of $\GL_r(A)$. For any CM point $\boldsymbol{\omega}\in\Omega^r$, if $F$ is defined at $\boldsymbol{\omega}$, then we have
        \[
            F(\boldsymbol{\omega})\sim\left(\lambda_{\boldsymbol{\omega}}/\widetilde{\pi}\right)^k
        \]
        where $\lambda_{\boldsymbol{\omega}}\in\mathbb{C}_\infty^\times$ is a non-zero period of a CM Drinfeld module $\psi^{\boldsymbol{\omega}}$ defined over $\oK$ whose period lattice is homothetic to $\Lambda_{\bomega}$.
    \end{theorem}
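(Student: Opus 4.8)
The plan is to reduce the statement to the behaviour of the weight-one generators $E_{\bu}^{\mathrm{ari}}$ at CM points and then to propagate the resulting relation through the integral dependence supplied by Proposition~\ref{P:coefficients}. I would first dispose of the passage from the meromorphic to the holomorphic case. Writing $F=f/g$ with $f,g$ arithmetic Drinfeld modular forms for $\Gamma$ of weights $k+\ell$ and $\ell$, the hypothesis that $F$ is defined at $\bomega$ means precisely that $g(\bomega)\neq0$. It therefore suffices to prove, for a holomorphic arithmetic form $h$ of positive weight $w$, that $h(\bomega)/(\lambda_{\bomega}/\tpi)^{w}\in\oK$; applying this to $f$ and to $g$ and dividing then yields $F(\bomega)\sim(\lambda_{\bomega}/\tpi)^{k}$, the division being legitimate since $g(\bomega)\neq0$ and $(\lambda_{\bomega}/\tpi)^{\ell}\neq0$.

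The heart of the matter is the base case for the weight-one forms: for every $\bu\in\cT_\theta$ one should have $E_{\bu}^{\mathrm{ari}}(\bomega)\sim\lambda_{\bomega}/\tpi$. To establish it I would pass to the analytic description of $E_{\bu}^{\mathrm{ari}}$ through the Drinfeld exponential $e_{\Lambda_{\bomega}}$: up to an algebraic scalar and the normalizing power $\tpi^{-1}$ built into the arithmetic normalization of \S3, the value $E_{\bu}^{\mathrm{ari}}(\bomega)$ equals $\tpi^{-1}e_{\Lambda_{\bomega}}(\bu\cdot\bomega)^{-1}$, where $\bu\cdot\bomega$ is a torsion representative. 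Because $\bomega$ is a CM point, the Drinfeld module $\phi^{\bomega}$ with lattice $\Lambda_{\bomega}$ has complex multiplication, and by the theory of CM Drinfeld modules it is isomorphic over $\mathbb{C}_\infty$ to a Drinfeld module $\psi^{\bomega}$ defined over $\oK$; let $\xi\in\mathbb{C}_\infty^\times$ denote the homothety with $\Lambda_{\psi^{\bomega}}=\xi\Lambda_{\bomega}$. By the homogeneity of the exponential one has $e_{\Lambda_{\bomega}}(\bu\cdot\bomega)=\xi^{-1}e_{\Lambda_{\psi^{\bomega}}}(\xi(\bu\cdot\bomega))$, and since $\xi(\bu\cdot\bomega)$ represents a nonzero $\theta$-torsion class of $\psi^{\bomega}$, its exponential value is a nonzero torsion point of a Drinfeld module defined over $\oK$, hence an element of $\oK^\times$. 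Consequently $E_{\bu}^{\mathrm{ari}}(\bomega)\sim\tpi^{-1}\xi$. Finally, $w_r=1\in\Lambda_{\bomega}$ shows that $\xi$ is itself a nonzero period of $\psi^{\bomega}$; as $\Lambda_{\psi^{\bomega}}$ has rank one over the order $\End(\psi^{\bomega})$ --- whose scalars lie in $\oK$, being the linear coefficients at $0$ of endomorphisms defined over $\oK$ --- any two nonzero periods of $\psi^{\bomega}$ have ratio in $\oK$, so $\xi\sim\lambda_{\bomega}$ and the base case follows.

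To finish, I would feed the base case into Proposition~\ref{P:coefficients}. A holomorphic arithmetic form $h$ of weight $w$ satisfies a monic integral equation over the ring $\oK[E_{\bu}^{\mathrm{ari}}\mid\bu\in\cT_\theta]$; since this ring is graded with the $E_{\bu}^{\mathrm{ari}}$ placed in weight one and $h$ is homogeneous of weight $w$, I may take the equation homogeneous of total weight $nw$, so the coefficient of $h^{n-j}$ is homogeneous of weight $jw$, i.e.\ a $\oK$-linear combination of degree-$jw$ monomials in the $E_{\bu}^{\mathrm{ari}}$. Evaluating at $\bomega$ and invoking the base case, each such coefficient is a $\oK$-multiple of $(\lambda_{\bomega}/\tpi)^{jw}$. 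Writing $Y:=h(\bomega)/(\lambda_{\bomega}/\tpi)^{w}$ and dividing the evaluated relation by $(\lambda_{\bomega}/\tpi)^{nw}$ produces a monic polynomial equation for $Y$ with coefficients in $\oK$, whence $Y\in\oK$ and $h(\bomega)\sim(\lambda_{\bomega}/\tpi)^{w}$, completing the reduction.

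I expect the principal obstacle to lie in the base case rather than in the formal propagation. Two points require care there: first, matching the arithmetic normalization of $E_{\bu}^{\mathrm{ari}}$ from \S3 against the analytic exponential formula precisely enough to pin down the single factor of $\tpi$; and second, supplying the input from the theory of complex multiplication guaranteeing that both $\psi^{\bomega}$ and its torsion points descend to $\oK$, together with the rank-one structure of the period lattice over the CM order that renders the choice of nonzero period $\lambda_{\bomega}$ immaterial up to $\sim$. Once these are in place, the graded-integrality step is purely formal.
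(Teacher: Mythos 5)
Your proposal is correct and takes essentially the same route as the paper: the base case $E_{\bu}^{\mathrm{ari}}(\bomega)\sim\lambda_{\bomega}/\widetilde{\pi}$ is exactly Lemma~\ref{Lem:CM_values_E} (isomorphism of $\phi^{\bomega}$ to a model over $\oK$ via CM/class field theory, plus algebraicity of the torsion value of the exponential), and the propagation through a weight-homogeneous monic integral equation over $\oK[E_{\bu}^{\mathrm{ari}}\mid\bu\in\mathcal{T}_\theta]$ is precisely Proposition~\ref{P:coefficients} combined with the division step in the paper's proof of Theorem~\ref{Thm:Special_Values_to_Periods}. The only cosmetic difference is that the paper takes $\lambda_{\bomega}$ to be the homothety factor itself (using $1\in\Lambda_{\bomega}$), so your extra comparison of the homothety $\xi$ with a chosen period is absorbed into the choice of $\lambda_{\bomega}$.
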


    \begin{remark}
        \textnormal{We note that Ayotte proved in his thesis \cite{Ayo23} another analogue of \eqref{Eq:Shimura_Period} for arithmetic Drinfeld modular forms at a certain family of CM points by following the strategy of Urban \cite{Urb14}. However, those forms are defined through the algebraic description of Drinfeld modular forms (see \cite[Def.~3.4.13]{Ayo23}) and hence their definition is different than ours. Also, Ayotte only considered CM points $\bomega$ such that the infinite place is inert in the fraction field of $\End(\Lambda_{\bomega})$. It would be interesting to compare our result Theorem~\ref{Intro:Special_Values_to_Periods} and \cite[Thm.~4.6.1]{Ayo23}.}
    \end{remark}

\subsection{Periods and quasi-periods of non-isogenous CM Drinfeld modules}
    Let $\phi$ be a Drinfeld module of rank $r$ defined over $\oK$ with the period lattice $\Lambda_\phi\subset\mathbb{C}_\infty$. It has the period matrix $P_\phi$, given as in \eqref{Eq:Period_Matrix}, that consists of periods and quasi-periods of $\phi$. If we denote by $\oK(P_\phi)$ the field generated by the entries of $P_\phi$ over $\oK$, then its transcendence degree was determined completely in \cite[Thm.~1.2.2]{CP12}. More precisely, if we define $s:=\End(\Lambda_\phi)$, then we have 
    \[
    \trdeg_{\oK}\oK(P_\phi)=r^2/s.
    \]
    
    In what follows, we describe Chang's result \cite[Thm.~2.2.2]{Cha12} on the transcendence theory for periods and quasi-periods of several CM Drinfeld modules. Consider $\phi_1,\dots,\phi_n$ to be CM Drinfeld modules of rank $2$ defined over $\oK$ which are pairwise non-isogenous (see \S2.1 for the definition of isogeny between Drinfeld modules). By the Legendre relation for Drinfeld modules (see \eqref{Eq:Legendre_Relation}), we have $\det(P_{\phi_i})/\det(P_{\phi_j})\in\oK^\times$. The result of Chang asserts that all the  algebraic relations among periods and quasi-periods of $\phi_1,\dots,\phi_n$ are those coming from the Legendre relation. In particular, he proved that
    \[
        \trdeg_{\oK}\oK(\cup_{i=1}^nP_{\phi_i})=n+1.
    \]

    The second main result in this paper, which will be restated in Theorem~\ref{Thm:Algebraic_Independence} later, is a generalization of Chang's result to CM Drinfeld modules defined over $\oK$ of arbitrary rank whose endomorphism algebras are Galois over $\mathbb{F}_q(t)$.
    \begin{theorem}\label{Intro:Algebraic_Independence}
        Let $\phi_1,\dots,\phi_n$ be CM Drinfeld modules of rank $r_1,\dots,r_n\geq 2$ defined over $\oK$. Suppose that for any $1\leq i \leq n$, the endomorphism algebra $\mathcal{K}_i:=\mathbb{F}_q(t)\otimes_{\mathbb{F}_q[t]}\End(\Lambda_{\phi_i})$ is Galois over $\mathbb{F}_q(t)$ and $\mathcal{K}_i\cap\mathcal{K}_j=\mathbb{F}_q(t)$ for any $i\neq j$. Let $\oK\big(\cup_{i=1}^nP_{\phi_i}\big)$ be the field generated by the entries of $P_{\phi_i}$ for each $1\leq i\leq n$ over $\oK$. Then we have
        \[
            \trdeg_{\oK}\oK(\cup_{i=1}^nP_{\phi_i})=(r_1+\cdots+r_n)-(n-1).
        \]
    \end{theorem}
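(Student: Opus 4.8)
The plan is to translate the statement into a computation of the dimension of a motivic (difference) Galois group and to carry out that computation via the character lattices of tori. To each CM Drinfeld module $\phi_i$ I would attach its associated dual $t$-motive $M_i$ together with a rigid analytic trivialization whose specialization recovers the period matrix $P_{\phi_i}$; by the transcendence machinery underlying \cite{CP12} relating periods to difference Galois groups, the quantity $\trdeg_{\oK}\oK(\cup_{i=1}^n P_{\phi_i})$ equals the dimension of the Galois group $\Gamma$ attached to the direct sum $M:=M_1\oplus\cdots\oplus M_n$ in the Tannakian category generated by these motives together with the Carlitz motive. Thus the whole problem becomes the determination of $\dim\Gamma$.

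First I would identify the Galois group $\Gamma_i$ of each individual $M_i$. Since $\phi_i$ is CM, the field $\cK_i$ acts on $M_i$, which is therefore free of rank one over $\cK_i$, and $\Gamma_i$ must commute with this action; hence $\Gamma_i$ lies in the centralizer of $\cK_i$, namely the torus $\Res_{\cK_i/\FF_q(t)}\Gm$ of dimension $r_i$. On the other hand the single-module transcendence formula of \cite{CP12} gives $\trdeg_{\oK}\oK(P_{\phi_i})=r_i^2/s=r_i$ with $s=\rank_A\End(\Lambda_{\phi_i})=r_i$, which equals $\dim\Gamma_i$. As $\Gamma_i$ is connected, being a $t$-motivic Galois group, and is contained in a torus of the same dimension, I conclude $\Gamma_i=\Res_{\cK_i/\FF_q(t)}\Gm$. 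Next, the higher-rank analogue of the Legendre relation — that $\wedge^{r_i}M_i$ is isomorphic to a fixed power of the Carlitz motive, equivalently that $\det P_{\phi_i}$ is an $\oK$-multiple of a power of $\tpi$ — shows that the determinant character $N_i\colon\Gamma_i\to\Gm$, which is the norm map of $\cK_i/\FF_q(t)$, factors through the common one-dimensional Galois group of the Carlitz motive. Consequently $\Gamma$ is contained in the fiber product
\[
P:=\Big\{(g_1,\dots,g_n)\in\prod_{i=1}^n\Res_{\cK_i/\FF_q(t)}\Gm : N_1(g_1)=\cdots=N_n(g_n)\Big\},
\]
whose dimension, since each $N_i$ is surjective, is $(r_1+\cdots+r_n)-(n-1)$.

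It remains to prove the reverse inclusion $\Gamma=P$, and this is the crux. The projection $\Gamma\to\Gamma_i$ is surjective for each $i$ because $\Gamma_i$ is already the Galois group of the subquotient $M_i$, so by a Goursat-type argument the gap between $\Gamma$ and the fiber product $P$ is measured by the common sub-quotients of the tori $\Res_{\cK_i/\FF_q(t)}\Gm$ other than the one through which the norm maps factor. Passing to character lattices, $X^*(\Res_{\cK_i/\FF_q(t)}\Gm)=\ZZ[\Gal(\cK_i/\FF_q(t))]$ as a module over $\Gal(\oFqt/\FF_q(t))$. The hypotheses that each $\cK_i/\FF_q(t)$ is Galois and that $\cK_i\cap\cK_j=\FF_q(t)$ for $i\neq j$ guarantee that $\cK_1,\dots,\cK_n$ are mutually linearly disjoint, so $\Gal(\oFqt/\FF_q(t))$ surjects onto $\Gal(\cK_1\cdots\cK_n/\FF_q(t))=\prod_i\Gal(\cK_i/\FF_q(t))$. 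Under this identification the nontrivial constituents of $\ZZ[\Gal(\cK_i/\FF_q(t))]$ and $\ZZ[\Gal(\cK_j/\FF_q(t))]$ factor through different factors of the product and hence share no common irreducible piece; the only common constituent is the trivial one, corresponding precisely to the augmentation, that is, to the norm character $N_i$. Therefore the tori have no common quotient beyond the $\Gm$ already built into $P$, the character lattice of $\Gamma$ coincides with that of $P$, and $\Gamma=P$.

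The main obstacle is exactly this last step: establishing rigorously, within the Tannakian formalism, that linear disjointness of the endomorphism fields forces the absence of any algebraic relation among the periods of distinct $\phi_i$ beyond those produced by the Legendre-type determinant relation. Everything else — the reduction to Galois groups, the identification of each $\Gamma_i$ as a restriction-of-scalars torus, and the dimension count for $P$ — is then routine, and yields $\trdeg_{\oK}\oK(\cup_{i=1}^n P_{\phi_i})=\dim\Gamma=\dim P=(r_1+\cdots+r_n)-(n-1)$.
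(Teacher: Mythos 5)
Your overall strategy is sound and reaches the right conclusion, but the decisive step is handled by a genuinely different mechanism than the paper's. Both arguments share the same skeleton: reduce to computing $\dim\Gamma$ for the direct sum motive via Papanikolas's theorem, identify each $\Gamma_i$ with $\Res_{\cK_i/\FF_q(t)}\Gm$, and use the determinant/Legendre relation to get $\Gamma\subseteq P$ (the paper's $T_n$). Where you diverge is in proving $\Gamma=P$: you dualize to character lattices and argue that the $\Gal$-modules $\QQ[\Gal(\cK_i/\FF_q(t))]$ share only the trivial constituent, so the kernel of $X^*(\prod_i G_i)\otimes\QQ\to X^*(\Gamma)\otimes\QQ$ is forced into the span of the norm characters and hence has rank at most $n-1$. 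The paper instead runs an induction on $n$ entirely on the group side: it splits off a complementary subtorus $\mathcal{G}_{[n]}'$ of $\Ker\pi_{[n]}\cap\mathcal{G}$, inverts $\pi_{[n]}$ up to isogeny, and shows the induced map $H_1\times\cdots\times H_{n-1}\to H_n$ between norm-one tori is constant because $(H_n)_{\cK_1\cdots}$ is anisotropic while the source splits (Lemma~\ref{Lem:Morphism_between_Torus}, via \cite[Prop.~13.2.2]{Spr98}); this bounds $\dim\Ker\pi_n$ from below. Your character-lattice route is arguably more transparent and avoids the isogeny-splitting bookkeeping, at the cost of having to make the isotypic-decomposition argument precise for $n>2$ factors; you flag this yourself as the unfinished crux, and it does go through.

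One step as written is false and must be repaired. You assert that pairwise $\cK_i\cap\cK_j=\FF_q(t)$ for Galois extensions yields $\Gal(\cK_1\cdots\cK_n/\FF_q(t))=\prod_i\Gal(\cK_i/\FF_q(t))$. Pairwise trivial intersections do \emph{not} imply mutual linear disjointness: the standard counterexample is $\QQ(\sqrt2)$, $\QQ(\sqrt3)$, $\QQ(\sqrt6)$, whose compositum has degree $4$, not $8$. Fortunately the conclusion you actually need --- that for $i\neq j$ the nontrivial irreducible constituents of $\ZZ[\Gal(\cK_i/\FF_q(t))]$ and $\ZZ[\Gal(\cK_j/\FF_q(t))]$ are disjoint --- follows from the pairwise hypothesis alone: a nontrivial irreducible representation of $\Gal(\oFqt/\FF_q(t))$ occurring in both would have kernel containing the subgroups fixing $\cK_i$ and $\cK_j$, hence would factor through $\Gal(\cK_i\cap\cK_j/\FF_q(t))=1$, a contradiction. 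So you should delete the appeal to the product decomposition of the compositum's Galois group and argue constituent by constituent as above; with that repair, and with the rank bound $\dim_{\QQ}L\leq n-1$ for the kernel $L$ spelled out (using $L\cap X^*(G_i)\otimes\QQ=0$ from the surjectivity of $\Gamma\to\Gamma_i$), your proof is complete. Note that the paper, too, only ever uses the hypothesis pairwise (Lemma~\ref{Lem:Morphism_between_Torus} is a two-field statement).
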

    
    \begin{remark} \textnormal{We note that our conditions on the endomorphism algebras $\mathcal{K}_1,\dots,\mathcal{K}_n$ imply that the collection $\{\phi_1,\dots,\phi_n\}$ of CM Drinfeld modules is pairwise non-isogenous (\cite[Prop. 2.1.1]{Cha12}).}    
    \end{remark}

    \begin{remark} \textnormal{At the writing of this article, the authors were informed that Maurischat and Namoijam investigated the transcendence degree of hyperderivatives of periods and quasi-periods of direct sums of Drinfeld modules by studying the image of the Galois representations. It would be interesting to compare this approach with the methods of the present paper.}
    \end{remark}

    The key ingredient of our approach is to apply the theory developed by Papanikolas in \cite{Pap08} and to determine the dimension of the $t$-motivic Galois group attached to the direct sum of non-isogenous CM Drinfeld modules $\phi_1,\dots,\phi_n$. The idea of our proof is inspired by the argument used in \cite[Thm.~3.3.2]{Cha12}. However, our approach still differs from his in a certain aspect. In \cite{Cha12}, his induction hypothesis to obtain a contradiction at the end relies on the following fact, which mainly follows from the assumption  $r_i=2$ for each $1\leq i\leq n$ : If $\trdeg_{\oK}\oK(\cup_{i=1}^{n-1}P_{\phi_i})=n$ but $\trdeg_{\oK}\oK(\cup_{i=1}^nP_{\phi_i})<n+1$, then we must have $\trdeg_{\oK}\oK(\cup_{i=1}^nP_{\phi_i})=n$. On the other hand, in our setting, due to the arbitrary choice of $r_i$, if  we assume that $\trdeg_{\oK}\oK(\cup_{i=1}^{n-1}P_{\phi_i})=(r_1+\cdots+r_{n-1})-(n-2)$ but $\trdeg_{\oK}\oK(\cup_{i=1}^nP_{\phi_i})<(r_1+\cdots+r_{n})-(n-1)$, then we cannot directly determine $\trdeg_{\oK}\oK(\cup_{i=1}^nP_{\phi_i})$ as there are several possibilities for the transcendence degree. Therefore, Chang's method cannot be transported to our situation without any innovation. To overcome this problem, we apply some properties of algebraic tori. More precisely, since the $t$-motivic Galois group $G$ associated to the direct sum of $\phi_1,\dots,\phi_n$ is an algebraic torus (see \eqref{Eq:Galois_1}), as described in the proof of Theorem \ref{Thm:Galois}, we can find a subtori $H$ and $H'$ of $G$ such that $G=H\cdot H'$ and $|H\cap H'|<\infty$. Here, we note that the dimension of $H'$ is determined by $r_1,\dots,r_{n-1}$. Then using the condition that the endomorphism algebras of $\phi_1,\dots,\phi_n$ are pairwise linearly disjoint, it follows that the maximal anisotropic subtorus of $H'$ actually lies in the kernel of the projection of $G$ into the $t$-motivic Galois group corresponding to $\phi_n$. This allows us to determine the dimension of $G$. We refer the reader to \S4 for all these details.
    
    As a consequence of Theorem~\ref{Intro:Special_Values_to_Periods} and Theorem~\ref{Intro:Algebraic_Independence} as well as a detailed analysis on the transcendence basis of $\oK(\cup_{i=1}^nP_{\phi_i})$, we deduce our next result which extends \cite[Thm.~1.2.1]{Cha12} to meromorphic arithmetic Drinfeld modular forms of arbitrary rank.
    \begin{theorem}\label{Thm:Intro}
        Let $\bomega_1,\dots,\bomega_n\in\Omega^r$ be CM points such that
        \begin{enumerate}
            \item $\mathcal{K}_i:=\mathbb{F}_q(t)\otimes_{\mathbb{F}_q[t]}\End(\Lambda_{\bomega_i})$ is Galois over $\mathbb{F}_q(t)$ for each $1\leq i\leq n$.
            \item $\mathcal{K}_i\cap\mathcal{K}_j=\mathbb{F}_q(t)$ for any $i\neq j$.
        \end{enumerate}
        Let $F$ be a meromorphic arithmetic Drinfeld modular form for a congruence subgroup of $\GL_r(A)$. Assume that  $F$ is defined at $\bomega_i$ and $F(\bomega_i)\neq 0$ for every $1\leq i\leq n$. Then we have
        \[
            \trdeg_{\oK}\oK(F(\bomega_1),\dots,F(\bomega_n))=n.
        \]
    \end{theorem}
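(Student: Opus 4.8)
The plan is to reduce the statement to an algebraic independence result for periods via Theorem~\ref{Intro:Special_Values_to_Periods}, to feed the resulting CM Drinfeld modules into Theorem~\ref{Intro:Algebraic_Independence}, and then to carry out a careful bookkeeping of eigen-periods in order to isolate the quantities that actually occur in the values $F(\bomega_i)$. Writing $k$ for the weight of $F$ (which we may take to be non-zero, since otherwise every $F(\bomega_i)\sim 1$ and there is nothing to prove) and setting $\phi_i:=\psi^{\bomega_i}$, $\lambda_i:=\lambda_{\bomega_i}$, Theorem~\ref{Intro:Special_Values_to_Periods} gives
\[
    F(\bomega_i)\sim\bigl(\lambda_i/\tpi\bigr)^{k},\qquad 1\le i\le n,
\]
where each $\phi_i$ is a CM Drinfeld module of rank $r$ over $\oK$ whose period lattice is homothetic to $\Lambda_{\bomega_i}$. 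Since homothetic lattices share the same endomorphism ring, $\End(\Lambda_{\phi_i})=\End(\Lambda_{\bomega_i})$, so hypotheses (1) and (2) transfer verbatim to $\phi_1,\dots,\phi_n$. Thus Theorem~\ref{Intro:Algebraic_Independence} applies with $r_1=\cdots=r_n=r$ and yields $\trdeg_{\oK}\oK\bigl(\bigcup_{i=1}^{n}P_{\phi_i}\bigr)=nr-(n-1)$.

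The heart of the argument is to extract from this global transcendence degree the algebraic independence of the specific elements $\lambda_1,\dots,\lambda_n,\tpi$, and here I would exploit the CM structure of each $\phi_i$. Because $\mathcal{K}_i/\mathbb{F}_q(t)$ is Galois of degree $r$ and $\Lambda_{\phi_i}$ is a rank-one module over an order of $\mathcal{K}_i\subset\oK$, every entry of the period row of $P_{\phi_i}$ is an $\oK^{\times}$-multiple of a single fundamental period; in particular $\lambda_i$, being such an entry, is an $\oK^\times$-multiple of it. After an $\oK$-rational change of basis on the Betti and de Rham sides (available precisely because $\mathcal{K}_i$ is Galois, hence contains all its conjugates in $\oK$), the matrix $P_{\phi_i}$ is equivalent to $\diag(\Pi_1^{(i)},\dots,\Pi_r^{(i)})$ for eigen-periods with $\oK(P_{\phi_i})=\oK(\Pi_1^{(i)},\dots,\Pi_r^{(i)})$, satisfying $\lambda_i\sim\Pi_1^{(i)}$ and, by the generalized Legendre relation, $\Pi_1^{(i)}\cdots\Pi_r^{(i)}\sim\det P_{\phi_i}\sim\tpi$ (up to a fixed power of $\tpi$ depending only on $r$, which does not affect what follows). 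Imposing only these $n-1$ relations on the $nr$ eigen-periods already forces the transcendence degree down to $nr-(n-1)$, and since Theorem~\ref{Intro:Algebraic_Independence} shows this bound is attained there can be no further relations. Hence the collection obtained by deleting $\Pi_r^{(i)}$ for $2\le i\le n$ is a transcendence basis of $\oK\bigl(\bigcup_i P_{\phi_i}\bigr)$ over $\oK$; it contains $\Pi_1^{(1)},\dots,\Pi_1^{(n)}$ as well as $\Pi_2^{(1)},\dots,\Pi_r^{(1)}$, while $\tpi\sim\Pi_1^{(1)}\cdots\Pi_r^{(1)}$ involves the independent basis elements $\Pi_2^{(1)},\dots,\Pi_r^{(1)}$, which are distinct from every $\Pi_1^{(i)}$. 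It follows that $\Pi_1^{(1)},\dots,\Pi_1^{(n)},\tpi$, and therefore $\lambda_1,\dots,\lambda_n,\tpi$, are algebraically independent over $\oK$.

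The descent to $F(\bomega_i)$ is then formal. Writing $y_i:=(\lambda_i/\tpi)^{k}$, a nonzero relation $Q(y_1,\dots,y_n)=0$ with $Q\in\oK[x_1,\dots,x_n]$ would, after substitution and clearing powers of $\tpi$, become a Laurent-monomial relation among the algebraically independent elements $\lambda_1,\dots,\lambda_n,\tpi$; since $k\neq 0$, distinct exponent vectors yield distinct monomials in the $\lambda_i$, forcing every coefficient of $Q$ to vanish. Thus $y_1,\dots,y_n$ are algebraically independent over $\oK$, and because $F(\bomega_i)\sim y_i$ we conclude
\[
    \trdeg_{\oK}\oK\bigl(F(\bomega_1),\dots,F(\bomega_n)\bigr)=\trdeg_{\oK}\oK(y_1,\dots,y_n)=n.
\]

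I expect the main obstacle to be the middle step: producing the eigen-period decomposition of each CM period matrix and verifying that the generalized Legendre relation accounts for \emph{all} relations linking the $\phi_i$, so that a transcendence basis can be chosen simultaneously containing one free period $\Pi_1^{(i)}\sim\lambda_i$ for each $i$ and exhibiting $\tpi$ as a product involving basis elements disjoint from $\{\Pi_1^{(i)}\}$. This is precisely the detailed analysis of the transcendence basis alluded to in the introduction, and it is the place where both the Galois hypothesis and the sharp count of Theorem~\ref{Intro:Algebraic_Independence} are indispensable; once it is in place, the remaining monomial manipulation is routine.
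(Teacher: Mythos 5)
Your top-level reduction is exactly the paper's: combine Theorem~\ref{Intro:Special_Values_to_Periods} with the algebraic independence of periods, then descend from $(\lambda_{\bomega_i}/\tpi)^k$ to $F(\bomega_i)$ (your final Laurent-monomial manipulation is a correct, if roundabout, version of the observation that $\trdeg_{\oK}\oK(x_1^k,\dots,x_n^k)=\trdeg_{\oK}\oK(x_1,\dots,x_n)$ for $k\neq 0$). The difference is in the middle step, and that is where your argument has a genuine gap. The paper's \S4 restatement of the independence result, Theorem~\ref{Thm:Algebraic_Independence}, already contains as its second assertion precisely the fact you need --- $\trdeg_{\oK}\oK(\lambda_1/\tpi,\dots,\lambda_n/\tpi)=n$ --- and proves it by exhibiting an explicit transcendence basis $\mathfrak{T}$ of $\oK(\cup_i P_{\phi_i})$ containing all the ratios $\lambda_i/\tpi$. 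That construction rests on the ``expressible by the Legendre relation'' / ``bad pair'' machinery of \S2.2 together with Lemma~\ref{L:badpair}, which guarantees that at most one of the pairs $(\phi_i,\lambda_i)$ is bad, so for all but one index some quasi-period $F^{\phi_i}_{\tau^{j_i}}(\lambda_i)/\tpi$ can be deleted from the generating set. You instead assert an eigen-period diagonalization: that after $\oK$-rational changes of basis each $P_{\phi_i}$ becomes $\diag(\Pi_1^{(i)},\dots,\Pi_r^{(i)})$ with $\oK(P_{\phi_i})=\oK(\Pi_1^{(i)},\dots,\Pi_r^{(i)})$, $\lambda_i\sim\Pi_1^{(i)}$, and $\Pi_1^{(i)}\cdots\Pi_r^{(i)}\sim\tpi$. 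Nothing in the paper (nor in your proposal) establishes this. The delicate points are precisely the ones you would need: only the \emph{period} column of $P_{\phi_i}$ is visibly a column of $\oK^\times$-multiples of $\lambda_i$; to get the same column-scaling structure for the quasi-period columns one must construct $\oK$-rational eigen-biderivations for the $\mathcal{K}_i$-action on the de Rham module and prove the equivariance of the quasi-period pairing, and one must then check that the resulting diagonal form can be arranged so that one diagonal entry is $\lambda_i$ itself rather than merely an $\oK$-linear combination of the $\Pi_j^{(i)}$. You flag this as ``the main obstacle,'' and indeed it is the entire content of the step; as written the proof is incomplete there. Once the middle step is granted, your bookkeeping (deleting $\Pi_r^{(i)}$ for $i\geq 2$, checking $\tpi$ involves basis elements disjoint from the $\Pi_1^{(i)}$) is sound, though it proves more than is needed: independence of the ratios $\lambda_i/\tpi$ suffices, and that is all the paper establishes.

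Two smaller remarks. First, your parenthetical about weight zero is backwards: if $k=0$ then Theorem~\ref{Thm:Special_Values_to_Periods} is not even applicable (it assumes $k\neq 0$), and in any case $F(\bomega_i)\sim 1$ would make every value algebraic, so the conclusion would \emph{fail}, not hold trivially; the non-vanishing of the weight is a genuine hypothesis that must be in force. Second, when you invoke Theorem~\ref{Intro:Algebraic_Independence} you should verify that the modules $\phi_i=\psi^{\bomega_i}$ supplied by Theorem~\ref{Intro:Special_Values_to_Periods} inherit hypotheses (1) and (2); your observation that homothetic lattices have the same endomorphism ring handles this correctly and matches what the paper implicitly uses.
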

    
    The outline of the paper can be described as follows. In \S2, we introduce preliminaries on Drinfeld modules, quasi-periodic functions, dual $t$-motives as well as Drinfeld modular forms. In \S3, we discuss the notion of (meromorphic) arithmetic Drinfeld modular forms and describe their integrality in Proposition \ref{P:coefficients}. Later on, we provide a proof for Theorem \ref{Thm:Special_Values_to_Periods}. In \S4, we analyze some properties of algebraic tori and briefly review the transcendence theory of Papanikolas established in \cite{Pap08}. Finally, we provide a proof for Theorem \ref{Thm:Intro}. 

    \subsection*{Acknowledgments} The authors are grateful to Gebhard B\"{o}ckle and Sriram Chinthalagiri Venkata for their valuable comments and feedback which improve Theorem~\ref{Thm:Special_Values_to_Periods}. The first author was partially supported by the AMS-Simons Travel Grants. The second author acknowledges support from NSTC Grant 113-2115-M-007-001-MY3.    
    
\section{Preliminaries and Background}
In this section, we collect several fundamental properties of Drinfeld modules, quasi-periods and quasi-periodic functions of Drinfeld modules as well as Drinfeld modular forms which will be necessary to prove our results in later sections. Our exposition is based on \cite{Pap23, BBP21}.
\subsection{Drinfeld modules and quasi-periodic functions}
    Let $K\subseteq L \subseteq \mathbb{C}_{\infty}$ be field. We define $L[\tau]$ to be the twisted polynomial ring over $L$ subject to the relation that $\alpha^q\tau=\tau\alpha$ for $\alpha\in L$. One can define an action of $L[\tau]$ on $L$ given by
    \[
        \left(\sum_{i=0}^nc_i\tau^i\right)\cdot\alpha:=\sum_{i=0}^nc_i\alpha^{q^i}\in L,~\alpha\in L.
    \]
    We consider $t$ to be an indeterminate over $\mathbb{C}_{\infty}$ and for any $a\in \mathbb{F}_q[t]$, we set $a(\theta):=a|_{t=\theta}\in A$. Let $r$ be a positive integer. \emph{A Drinfeld module of rank $r$ defined over $L$} is an $\mathbb{F}_q$-algebra homomorphism 
    \begin{align*}
        \phi:\mathbb{F}_q[t]&\to L[\tau]\\
        a&\mapsto\phi_a=a(\theta)+g_{1,a}\tau+\dots+g_{r\deg(a),a}\tau^{r\deg(a)}
    \end{align*}
    such that $g_{r\deg(a),a}\neq 0$. We say that Drinfeld modules $\phi$ and $\phi'$ are \emph{isogenous} if there exists $u\in \mathbb{C}_{\infty}[\tau]$ such that $\phi_t u=u\phi'_t$. Moreover, we say that $\phi$ and $\phi'$ are \emph{isomorphic}, denoted by $\phi\sim \phi'$, if $u\in \mathbb{C}_{\infty}^{\times}$.  We denote by
 $ 
        \End(\phi)$
    the ring of isogenies of $\phi$. It has a natural $\mathbb{F}_q[t]$-module structure given by $a\cdot u:=\phi_au$ for $a\in\mathbb{F}_q[t]$ and $u\in\End(\phi)$.
    
    For each Drinfeld module $\phi$, there is a unique $\mathbb{F}_q$-linear power series $\exp_\phi(X)=\sum_{i\geq 0}\alpha_iX^{q^i}\in\mathbb{C}_\infty\llbracket X\rrbracket$ so that $\alpha_0=1$ and $\exp_{\phi}$ induces an entire function $\exp_\phi:\mathbb{C}_\infty\to\mathbb{C}_\infty$ satisfying the functional equation
    \begin{equation}\label{E:exp}
        \phi_t\big(\exp_\phi(X)\big)=\exp_\phi\big(\theta X\big).
    \end{equation}
   We set $\Lambda_\phi:=\Ker(\exp_\phi)$ and call each non-zero element of $\Lambda_{\phi}$ \emph{a period of $\phi$}. We also call $\Lambda_{\phi}$  \emph{the period lattice of $\phi$}. 

    A free $A$-module of rank $r$ is called \emph{an $A$-lattice} $\Lambda$ of rank $r$ if its intersection with any ball in $\CC_{\infty}$ of finite radius is finite. We define the \emph{exponential function $e_{\Lambda}:\CC_{\infty}\to \CC_{\infty}$ of $\Lambda$} by the infinite product
    \[
        e_{\Lambda}(\omega):=\omega\prod_{\substack{\lambda\in \Lambda\\\lambda\neq 0}}\Big(1-\frac{\omega}{\lambda}\Big), \ \ \omega\in \CC_{\infty}.
    \]
Let $\Lambda'$ be an $A$-lattice. We say that $\Lambda$ and $\Lambda'$ are \emph{isogenous} if there exists $\alpha\in \mathbb{C}_{\infty}^{\times}$ such that $\alpha\Lambda\subseteq \Lambda'$ and $\Lambda'/\alpha\Lambda$ is a finite $A$-module. Moreover, we say that $\Lambda$ and $\Lambda'$ are \emph{homothetic} if $\alpha\Lambda=\Lambda'$. 

By Drinfeld \cite{Dri74}, there exists an equivalence of categories between the category of 
Drinfeld modules of rank $r$ defined over $\mathbb{C}_{\infty}$ and the category of $A$-lattices of rank $r$. More precisely, for any Drinfeld module $\phi$ of rank $r$, there exists a unique $A$-lattice $\Lambda$ of rank $r$, which is the period lattice of $\phi$. Conversely, each $A$-lattice $\Lambda$ of rank $r$ corresponds to a Drinfeld module $\phi^{\Lambda}$ of rank $r$ defined over $\mathbb{C}_{\infty}$ so that $\exp_{\phi^{\Lambda}}=e_{\Lambda}$. One of the most fundamental examples of Drinfeld modules is \emph{the Carlitz module $C$} defined by 
\[
C_t:=\theta+\tau.
\]
It corresponds to the $A$-lattice of rank one generated by the Carlitz period $\widetilde{\pi}$ over $A$.
    
   We consider the endomorphism ring $\End(\Lambda_\phi)$ of $\Lambda_\phi$ given by
    \[
        \End(\Lambda_\phi):=\{c\in\mathbb{C}_\infty\mid c\Lambda_\phi\subset\Lambda_\phi\}.
    \]
    It is equipped with a natural $\mathbb{F}_q[t]$-module structure given by the left multiplication. In fact, it is a free $\mathbb{F}_q[t]$-module of rank $s$ with $1\leq s\leq r$ with $s|r$ and $\End(\Lambda_\phi)\cong\End(\phi)$ (see, for example, \cite[Thm.~5.2.11]{Pap23}). 

    
    An $\mathbb{F}_q$-linear map $\delta:\mathbb{F}_q[t]\to\mathbb{C}_\infty[\tau]\tau$ is called \emph{a $\phi$-biderivation} if $\delta(ab)=a(\theta)\delta(b)+\delta(a)\phi_b$ for all $a,b\in\mathbb{F}_q[t]$. Given a $\phi$-biderivation $\delta$, there exists a unique $\mathbb{F}_q$-linear power series ${\rm{F}}^{\phi}_\delta(X)\in X^q\mathbb{C}_\infty\llbracket X\rrbracket$ such that for any $a\in\mathbb{F}_q[t]$
    \begin{equation}\label{Eq:Period_Matrix}
        {\rm{F}}^{\phi}_\delta(a(\theta)X)-a(\theta){\rm{F}}^{\phi}_\delta(X)=\delta_a\big(\exp_\phi(X)\big).
    \end{equation}
    The power series ${\rm{F}}^{\phi}_\delta(X)$ induces an entire function ${\rm{F}}^{\phi}_\delta:\mathbb{C}_{\infty}\to \mathbb{C}_{\infty}$ which we call \emph{the quasi-periodic function associated to $\delta$}. The values ${\rm{F}}^{\phi}_\delta(\omega)$ for $\omega\in\Lambda_\phi$ are called \emph{quasi-periods of $\phi$}. Note that each $\phi$-biderivation $\delta$ is uniquely determined by $\delta(t)$. We consider a $\phi$-biderivation defined by the rule $t\to \phi_t-\theta$ and denote by $F_{\tau^0}^{\phi}$ the quasi-periodic function associated to it. Then ${\rm{F}}^{\phi}_{\tau^0}(X)=\exp_\phi(X)-X$ and ${\rm{F}}^{\phi}_{\tau^0}(\omega)=-\omega$ for $\omega\in\Lambda_\phi$. For $1\leq i\leq r-1$, we further set $F_{\tau^i}^{\phi}$ to be the quasi-periodic function associated to the $\phi$-biderivation defined by the rule $t\to \tau^i$. Let $w_1,\dots,w_r$ be a fixed $A$-basis for $\Lambda_\phi$. Then we define
    \[
        P_\phi:=\begin{pmatrix}
            -w_1 & {\rm{F}}^{\phi}_{\tau}(w_1) & \cdots & {\rm{F}}^{\phi}_{\tau^{r-1}}(w_1)\\
            -w_2 & {\rm{F}}^{\phi}_{\tau}(w_2) & \cdots & {\rm{F}}^{\phi}_{\tau^{r-1}}(w_2)\\
            \vdots & \vdots & & \vdots\\
            -w_r & {\rm{F}}^{\phi}_{\tau}(w_r) & \cdots & {\rm{F}}^{\phi}_{\tau^{r-1}}(w_r)
        \end{pmatrix}\in \Mat_r(\mathbb{C}_{\infty})
    \]
    and call $P_{\phi}$ \emph{the period matrix of $\phi$}. Let $\oK(P_\phi)$ be the field generated by entries of $P_\phi$ over $\oK$. When $\phi$ is a Drinfeld module of rank $r$ defined over $\oK$, the transcendence degree of $\oK(P_\phi)$ can be completely determined (\cite[Thm.~1.2.2]{CP12}). More precisely, letting $s:=\rank_{\mathbb{F}_q[t]}\End(\phi)$, we have
    \[
        \trdeg_{\oK}\oK(P_\phi)=\frac{r^2}{s}.
    \]
    


\subsection{Drinfeld modules with complex multiplication}
    Let $\phi$ be a Drinfeld module of rank $r$. We  call $\phi$ \emph{a CM Drinfeld module of rank $r$} if $\End(\Lambda_\phi)$ is free of rank $r$ over $\mathbb{F}_q[t]$. In this subsection, we discuss some properties of CM Drinfeld modules defined over $\oK$ which will be useful in \S4.2. For more details, we refer the reader to \cite[\S7.5]{Pap23}. 

    Let $\phi$ be a CM Drinfeld module defined over $\oK$ with the period lattice $\Lambda_{\phi}$. We choose a period $\lambda\in \Lambda_{\phi}$. By \cite[Lem. 3.1.1]{CG25} (cf. \cite[(3.12), (3.13)]{BrPa02}), 
    if we fix an $A$-basis $\{\eta_1,\dots,\eta_r\}$ for $\Lambda_{\phi}$, then for each $1\leq i\leq r$ and $0\leq j\leq r-1$, there is a homegenous polynomial $\mathcal{L}_{ij}(X_0,\dots,X_{r-1})\in\oK[X_0,\dots,X_{r-1}]$ of degree one such that
    \[
        F_{\tau^j}^{\phi}(\eta_i)=\mathcal{L}_{ij}\big(\lambda,F_{\tau}^{\phi}(\lambda),\dots,F_{\tau^{r-1}}^{\phi}(\lambda)\big).
    \]
    We set
    \[
        \mathbf{D}(X_0,\dots,X_{r-1}):=\det\begin{pmatrix}
            \mathcal{L}_{10}&\dots &\mathcal{L}_{1(r-1)}\\
            \vdots & & \vdots \\
            \mathcal{L}_{r0}&\dots &\mathcal{L}_{r(r-1)}
        \end{pmatrix}\in\overline{K}[X_0,\dots,X_{r-1}].
    \]
    By \cite[(3.8)]{CG22a}, there exists $c_\phi\in\oK^\times$ such that
    \begin{equation}\label{Eq:Legendre_Relation}
        \mathbf{D}(\lambda,F_{\tau}^{\phi}(\lambda),\dots,F_{\tau^{r-1}}^{\phi}(\lambda))=c_\phi\widetilde{\pi}.
    \end{equation}
    We call the identity \eqref{Eq:Legendre_Relation} \emph{the Legendre relation for $\phi$}. For each $0\leq j\leq r-1$, consider the polynomial
    \begin{align*}
        \mathbf{Q}_j(X_j)&:=\bold{D}(\lambda,F_{\tau}^{\phi}(\lambda),\dots,X_j,\dots,F_{\tau^{r-1}}^{\phi}(\lambda))-c_\phi\widetilde{\pi}\\
        &\in\oK(\lambda,F_{\tau}^{\phi}\big(\lambda),\dots,\widehat{F_{\tau^j}(\lambda)},\dots,F_{\tau^{r-1}}^{\phi}(\lambda),\widetilde{\pi}\big)[X_j].
    \end{align*}
    Here, by the notation $\widehat{F_{\tau^j}(\lambda)}$, we mean that $F_{\tau^j}(\lambda)$ is not a generator of the field described. By the Legendre relation for $\phi$, we see that at least one of $Q_j(X_j)$ is not a zero polynomial. For $1\leq \mu \leq r-1$, we say that \emph{$F_{\tau^\mu}(\lambda)$ is expressible by the Legendre relation for $\phi$} if $Q_\mu(X_\mu)$ is not a zero polynomial.     
      In this case, $F_{\tau^\mu}(\lambda)$ must be algebraic over the field $\oK(\lambda,F_{\tau}^{\phi}\big(\lambda),\dots,\widehat{F_{\tau^\mu}(\lambda)},\dots,F_{\tau^{r-1}}^{\phi}(\lambda),\widetilde{\pi}\big)$. Furthermore, we have
    \[
\trdeg_{\oK}\oK(\lambda,F_{\tau}^{\phi}\big(\lambda),\dots,F_{\tau^{r-1}}^{\phi}(\lambda)\big)=\trdeg_{\oK}\oK(\lambda,F_{\tau}^{\phi}\big(\lambda),\dots,\widehat{F_{\tau^\mu}^\phi(\lambda)},\dots,F_{\tau^{r-1}}^{\phi}(\lambda),\widetilde{\pi}\big)=r.
    \]
    We call $(\phi,\lambda)$ \emph{a bad pair} if none of $F^{\phi}_{\tau}(\lambda),\dots,F^{\phi}_{\tau^{r-1}}(\lambda) $ is  expressible by the Legendre relation for $\phi$. We note that, in this case, $Q_0(X_0)$ is not a zero polynomial.
    
    The following lemma shows that if $\phi_1$ and $\phi_2$ are non-isogenous Drinfeld modules, then $(\phi_1,\lambda_1)$ and $(\phi_2,\lambda_2)$ can not be bad pairs simultaneously.

    \begin{lemma}\label{L:badpair}
        Let $\phi$ be a CM Drinfeld module of rank $r$ defined over $\oK$. Assume that $\lambda$ is a period of $\phi$ such that $(\phi,\lambda)$ is a bad pair. Then we have $\lambda^r/\widetilde{\pi}\in\oK^\times$. In particular, if $(\phi_1,\lambda_1)$ and $(\phi_2,\lambda_2)$ are two bad pairs, then $\phi_1$ and $\phi_2$ are isogenous.
    \end{lemma}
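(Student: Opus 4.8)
The plan is to prove the two assertions in turn, the first being the crux. Set $u_0:=\lambda$ and $u_j:=F^{\phi}_{\tau^j}(\lambda)$ for $1\le j\le r-1$. Since $\phi$ is a CM Drinfeld module, $\rank_{\mathbb{F}_q[t]}\End(\phi)=r$, so the formula $\trdeg_{\oK}\oK(P_\phi)=r^2/s$ recalled above gives $\trdeg_{\oK}\oK(P_\phi)=r$. As every entry $F^{\phi}_{\tau^j}(\eta_i)$ of $P_\phi$ is the value at $(u_0,\dots,u_{r-1})$ of the $\oK$-linear form $\mathcal{L}_{ij}$, we have $\oK(P_\phi)=\oK(u_0,\dots,u_{r-1})$, and therefore $u_0,\dots,u_{r-1}$ are algebraically independent over $\oK$. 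This independence is the engine of the whole argument.

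Next I would exploit the bad-pair hypothesis to upgrade pointwise vanishing to a polynomial identity. By definition $(\phi,\lambda)$ being a bad pair means $Q_\mu(X_\mu)=0$ for every $1\le\mu\le r-1$, i.e. $\mathbf{D}(u_0,\dots,u_{\mu-1},X_\mu,u_{\mu+1},\dots,u_{r-1})$ equals the constant $c_\phi\widetilde{\pi}$ as a polynomial in $X_\mu$. Writing $g:=\mathbf{D}(u_0,X_1,\dots,X_{r-1})\in\oK(u_0)[X_1,\dots,X_{r-1}]$ and expanding in powers of $X_\mu$, the vanishing of $Q_\mu$ says each coefficient of a positive power of $X_\mu$ vanishes at $(u_i)_{i\neq\mu}$. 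But $\{u_i : i\neq\mu,\ i\ge 1\}$ is algebraically independent over $\oK(u_0)$, so these coefficients vanish identically; thus $g$ does not involve $X_\mu$. Running over all $\mu$ shows $g\in\oK(u_0)$, i.e. $\mathbf{D}(u_0,X_1,\dots,X_{r-1})$ is independent of $X_1,\dots,X_{r-1}$. Since $\mathbf{D}$ is homogeneous of degree $r$ and $u_0\neq 0$, comparing coefficients forces $\mathbf{D}(X_0,\dots,X_{r-1})=cX_0^r$ for some $c\in\oK$. The Legendre relation then reads $c\lambda^r=c_\phi\widetilde{\pi}$, where $c\neq 0$ (otherwise $c_\phi\widetilde{\pi}=0$, which is absurd). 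Hence $\lambda^r/\widetilde{\pi}=c_\phi/c\in\oK^\times$, proving the first claim; note this is consistent with the definition, since $Q_0(X_0)=cX_0^r-c_\phi\widetilde{\pi}$ is indeed nonzero.

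For the final assertion, applying the first part to each bad pair gives $\lambda_1^{r_1}\sim\widetilde{\pi}\sim\lambda_2^{r_2}$, so both $\lambda_1$ and $\lambda_2$ are algebraic over $\oK(\widetilde{\pi})$; as $\widetilde{\pi}$ is transcendental over $\oK$, this yields $\trdeg_{\oK}\oK(\lambda_1,\lambda_2)=1$. To conclude that $\phi_1$ and $\phi_2$ are isogenous I would combine this with the transcendence theory of periods, invoking the algebraic independence over $\oK$ of the periods of non-isogenous CM Drinfeld modules, so that a transcendence degree of $1$ is impossible unless $\phi_1$ and $\phi_2$ are isogenous. I expect this last step to be the main obstacle: the single relation $\lambda_1^{r_1}\sim\lambda_2^{r_2}$ is by itself consistent with the expected transcendence degree $r_1+r_2-1$ (it only reflects the common determinant), so a naive dimension count does not suffice, and one must isolate precisely the independence of the two \emph{periods} (as opposed to the full period matrices) that the bad-pair relation violates. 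Concretely, I would feed this relation into the $t$-motivic framework of \S4 (or cite the corresponding statement of \cite{CP12, Cha12}) to deduce a nontrivial morphism between the $t$-motives of $\phi_1$ and $\phi_2$, hence an isogeny.
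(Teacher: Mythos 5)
Your proof of the first assertion is correct and follows the same route as the paper's, but is actually more careful: the paper passes directly from ``each $\mathbf{Q}_\mu$ is the zero polynomial'' to ``$\mathbf{D}$ is a polynomial in $X_0$ alone,'' whereas you justify this specialization-to-identity step by first observing that $\lambda, F^\phi_{\tau}(\lambda),\dots,F^\phi_{\tau^{r-1}}(\lambda)$ are algebraically independent over $\oK$ (from $\trdeg_{\oK}\oK(P_\phi)=r^2/s=r$ and $\oK(P_\phi)=\oK(u_0,\dots,u_{r-1})$). That extra input is genuinely needed --- without some transcendence hypothesis on the $u_i$ one can write down homogeneous counterexamples --- so this part of your argument is a strengthening of the written proof rather than a deviation from it.

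The second assertion is where your proposal has a genuine gap, and you have correctly located it but not closed it. The statement you propose to invoke --- algebraic independence over $\oK$ of periods of non-isogenous CM Drinfeld modules --- is not an available input: in arbitrary rank it is precisely the ``in particular'' clause of Theorem \ref{Thm:Algebraic_Independence}, whose proof uses Lemma \ref{L:badpair}, so the argument would be circular ([CP12] treats a single Drinfeld module and [Cha12] only rank $2$). The paper instead extracts a \emph{linear} relation and quotes Yu: from $\lambda_1^{r_1}\sim\widetilde{\pi}\sim\lambda_2^{r_2}$, in the case $r_1=r_2=r$ one gets $(\lambda_1/\lambda_2)^{r}\in\oK^\times$, hence $\lambda_1/\lambda_2\in\oK^\times$ because $\oK$ is algebraically closed in $\mathbb{C}_\infty$; then \cite[Thm.~1.2]{Yu90} (a $\oK$-linear relation between periods of two Drinfeld modules over $\oK$ forces an isogeny) concludes. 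By reducing the relation to the single transcendence-degree statement $\trdeg_{\oK}\oK(\lambda_1,\lambda_2)=1$ you throw away exactly the linear structure that makes Yu's theorem applicable, which is why you then find yourself needing the (unavailable) algebraic-independence statement. You are right that when $r_1\neq r_2$ the ratio $\lambda_1/\lambda_2$ is transcendental and no linear relation emerges --- note that the paper's one-line deduction of $\lambda_1/\lambda_2\in\oK^\times$ also silently assumes equal ranks --- but in that case the conclusion would anyway require the impossible (an isogeny between modules of different ranks), so this case needs a separate argument rather than the $t$-motivic machinery of \S4, which at this point in the paper cannot be used without circularity.
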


    \begin{proof}
        Note that since $\mathcal{L}_{ij}$ is a homogeneous polynomial in $X_0,\dots,X_{r-1}$ for each $1\leq i\leq r$ and $0\leq j\leq r-1$, $\mathbf{D}(X_0,\dots,X_{r-1})$ is also a homogeneous polynomial of degree $r$. Since $(\phi,\lambda)$ is a bad pair, for each $1\leq j\leq r-1$, the polynomial
        \[
            \mathbf{Q}_j(X_j)=\mathbf{D}(\lambda,F_{\tau}^{\phi}(\lambda),\dots,X_j,\dots,F_{\tau^{r-1}}^{\phi}(\lambda))-c_\phi\widetilde{\pi}
        \]
        is a zero polynomial. It implies that the Legendre relation \eqref{Eq:Legendre_Relation} is independent of $F_{\tau^\mu}^\phi(\lambda)$ for each $1\leq \mu\leq r-1$. Hence $\mathbf{D}(X_0,\dots,X_{r-1})$ is simply a homogeneous polynomial of degree $r$ in variable $X_0$, which is an algebraic multiple of the monomial $X_0^r$. In this case, \eqref{Eq:Legendre_Relation} implies $\lambda^r/\widetilde{\pi}\in\oK^\times$ which gives the first assertion. We prove the second assertion. By the first assertion, note that, if $(\phi_1,\lambda_1)$ and $(\phi_2,\lambda_2)$ are two bad pairs, then we have $\lambda_1/\lambda_2\in\oK^\times$. By \cite[Thm.~1.2]{Yu90}, this implies that $\phi_1$ and $\phi_2$ are isogenous as desired.
    \end{proof}

\subsection{Rigid analytic trivialization and dual $t$-motives}
    Our goal in this subsection is to review  \emph{the $t$-motivic objects} associated to a Drinfeld module $\phi$ defined over $\oK$. For a through discussion on these objects, we refer the reader to \cite{BP20, CP12, Pap08}.
    
    For $n\in\mathbb{Z}$, we define the $n$-fold Frobenius twisting on the Laurent series field $\mathbb{C}_\infty(\!(t)\!)$ by setting
    \begin{align}
        \begin{split}
            (\cdot)^{(n)}:\mathbb{C}_\infty(\!(t)\!)&\to\mathbb{C}_\infty(\!(t)\!)\\
        f=\sum c_it^i&\mapsto f^{(n)}:=\sum c_i^{q^n}t^i.
        \end{split}
    \end{align}
Furthermore, we extend the operation $ (\cdot)^{(n)} $ to any matrix $B=(b_{ij})_{ij}\in \Mat_{r}(\mathbb{C}_\infty(\!(t)\!))$ by setting $B^{(n)}:= (b_{ij}^{(n)})_{ij}\in \Mat_{r}(\mathbb{C}_\infty(\!(t)\!))$. Moreover,  for any $c\in \mathbb{C}_{\infty}^\times$, we let $\TT_c$ be \emph{the Tate algebra} on the closed disk of radius $|c|_\infty$ centered at the origin, that is
    \[
        \TT_c:=\{f:=\sum_{n\geq 0}a_it^i\in\mathbb{C}_\infty\llbracket t\rrbracket\mid f~\mathrm{converges~at~each}~\alpha~\mathrm{with}~|\alpha|_\infty\leq |c|_{\infty}\}.
    \]
    For simplicity, we denote by $\mathbb{T}$ the Tate algebra $\mathbb{T}_1$, the ring of $\mathbb{C}_{\infty}$-valued functions converging on the unit disk.
    
    Let $\oK[t,\sigma]:=\oK[t][\sigma]$ be the twisted polynomial ring in variable $\sigma$ with coefficients in $\oK[t]$ subject to the relation $\sigma f=f^{(-1)}\sigma$ for $f\in \oK[t]$. 
    Let $\mathcal{M}$ be a left $\oK[t,\sigma]$-module. We call $\mathcal{M}$ \emph{a dual $t$-motive (or $t$-comotive \cite[Sec.~3.2]{GM25})} if $\mathcal{M}$ is free of finite rank over both $\oK[t]$ and $\oK[\sigma]$, and $(t-\theta)^n\mathcal{M}\subset\sigma\mathcal{M}$ for a sufficiently large integer $n$. Since $\mathcal{M}$ is free of finite rank over $\oK[t]$, if we set $r:=\rank_{\oK[t]}\mathcal{M}$ and fix a choice of a $\oK[t]$-basis $\bm\in\Mat_{r\times 1}(\mathcal{M})$, then there is a matrix $\Phi\in\Mat_{r}(\oK[t])\cap\GL_r(\oK(t))$ with $\det\Phi=c(t-\theta)^d$ for some $c\in\mathbb{C}_\infty^\times$ such that $\sigma\bm=\Phi\bm$. The dual $t$-motive $\mathcal{M}$ is called \emph{rigid analytically trivial} if there exists $\Psi\in\GL_r(\TT)$ such that $\Psi^{(-1)}=\Phi\Psi$. The pair $(\Phi,\Psi)$ is called a rigid analytic trivialization of $\mathcal{M}$. In fact, by \cite[Prop.~3.1.3]{ABP04}, the entries of $\Psi$ may be seen as entire functions of $t$. We denote by $\oK(\Psi(\theta))$ the field generated by entries of $\Psi(\theta)$ over $\oK$.
    
    For any $\sum_{i\geq 0}c_i\tau^i\in \oK[\tau]$, we let $(\sum_{i\geq 0}c_i\tau^i)^\star:=\sum_{i\geq 0}c_i^{(-i)}\sigma^i\in \oK[\sigma]$. Let $\phi$ be a Drinfeld module of rank $r$ defined over $\oK$. We associate to $\phi$ the set $\mathcal{M}_\phi:=\oK[\sigma]$. Note that $\mathcal{M}_{\phi}$ is a $\oK[t,\sigma]$-module. More precisely, it 
    has a natural left $\oK[\sigma]$-action and it is also equipped with the left $\oK[t]$-module structure, uniquely determined by
    \[
        t\cdot m:=m\phi_t^\star,~m\in\mathcal{M}_\phi.
    \]
We call $\mathcal{M}_{\phi}$ \emph{the dual $t$-motive of $\phi$}. Note that it is free of rank $r$ over $\oK[t]$. Moreover, it has a rigid analytic trivialization $(\Phi_{\phi},\Psi_{\phi})$ (see \cite[\S3.4]{CP12} for its explicit description).  By \cite[Prop. 3.4.7(c)]{CP12}, we have
    \begin{equation}\label{E:fieldperiod}
        \oK(P_\phi)=\oK(\Psi_{\phi}(\theta)).
    \end{equation}
Moreover, if we let
\[
\Omega(t):=(-\theta)^{-q/(q-1)}\prod_{i=1}^{\infty}\left(1-\frac{t}{\theta^{q^i}}\right)\in \mathbb{T}^{\times},
\]
then by \cite[Lem. 6.1.2]{GP19}, we see that 
\begin{equation}\label{E:det}
    \det(\Psi_{\phi})=\alpha \Omega(t)
\end{equation}
    for some $\alpha\in \oK^{\times}$.

    Before finishing this subsection, we note that these associated $t$-motivic objects will be fundamental for our later study in \S4 on algebraic relations among periods and quasi-periods of finitely many CM Drinfeld modules defined over $\oK$.

\subsection{Drinfeld modular forms}
 For $r\geq 2$, recall from \S1 the Drinfeld upper half plane $\Omega^r$ given by $\Omega^r:=\mathbb{P}^{r-1}(\mathbb{C}_\infty)\setminus\{K_\infty\text{-rational }\mathrm{hyperplanes}.\}$. After a certain normalization, we identify $\Omega^r$ with 
    \[
        \Omega^r=\{(w_1,\dots,w_{r-1},1)^\tr\in\Mat_{r\times 1}(\mathbb{C}_\infty)\mid w_1,\dots,w_{r-1},1~\mathrm{are}~K_\infty\text{-}\mathrm{linearly~independent}\}.
    \]
    Note that $\Omega^r$ is equipped with a structure of the rigid analytic space and each point $\boldsymbol{\omega}=(w_1,\dots,w_{r-1},1)^{\tr}$ corresponds to an $A$-lattice $\Lambda_{\bomega}$ of rank $r$ given by
    \[
\Lambda_{\boldsymbol{\omega}}=Aw_1+\cdots+Aw_{r-1}+A\subset\mathbb{C}_\infty.
    \]
    
In what follows, for each tuple $(b_1,\dots,b_r)\in K_{\infty}^r$ and $\bomega=(w_1,\dots,w_{r-1},1)^{\tr}$, we set $\ell_{b_1,\dots,b_r}(\bomega):=b_1w_1+\dots+b_{r-1}w_{r-1}+b_r$ and let $|\bomega|_{\infty}$ be the maximum among  $|w_1|,\dots,|w_{r-1}|$ and $1$.    
    For each positive integer $n$, we consider the subset $\Omega_{n}^r\subset \Omega^r$ which is the set of all $\bomega\in \Omega^r$ such that $|\ell_{b_1,\dots,b_r}(\bomega)|\geq q^{-n}|\bomega|_{\infty}$ for any $(b_1,\dots,b_r)\in K_{\infty}^r$. Then  $\{\Omega_{n}^r\}_{n=1}^{\infty}$ is an admissible covering of $\Omega^r$ \cite[\S3]{BBP21}. We call $f:\Omega^r\to \mathbb{C}_{\infty}$ \emph{a rigid analytic function} if its restriction to each $\Omega_n^r$ is the uniform limit of rational functions having no poles in $\Omega_n^r$.    
    
    Given $\gamma\in\GL_r(K_\infty)$ and $\boldsymbol{\omega}\in\Omega^r$, we set
    \[
        j(\gamma;\boldsymbol{\omega}):=\text{the last entry of}~\gamma\boldsymbol{\omega}.
    \]
    Then we can define $\GL_r(K_\infty)$-action on $\Omega^r$ by setting
    \[
        \gamma\cdot\boldsymbol{\omega}:=j(\gamma;\boldsymbol{\omega})^{-1}\gamma\boldsymbol{\omega}.
    \]
    For any rigid analytic function $f:\Omega^r\to\mathbb{C}_\infty$, $k\in\mathbb{Z}$ and $m\in\mathbb{Z}/(q-1)\mathbb{Z}$, we define
    \[
        (f\|_{k,m}\gamma)(\boldsymbol{\omega}):=\det(\gamma)^mj(\gamma;\boldsymbol{\omega})^{-k}f(\gamma\cdot\boldsymbol{\omega}).
    \]
    
    Let $N\in A$ be a monic polynomial. We set $\Gamma(1):=\GL_r(A)$ and for $N\in A\setminus \mathbb{F}_q$, define $\Gamma(N):=\Ker\big(\GL_r(A)\twoheadrightarrow\GL_r(A/N)\big)$ to be the kernel of the mod $N$ map. Note that $\Gamma(N)$ is a normal subgroup of $\Gamma(1)$ with finite index. By \emph{a congruence subgroup $\Gamma$ of $\GL_r(A)$}, we mean a subgroup of $\GL_r(A)$ that contains $\Gamma(N)$ for some monic polynomial $N$.
    \begin{definition} \textnormal{
        A rigid analytic function $f:\Omega^r\to\mathbb{C}_\infty$ is called \emph{a rank $r$  weak Drinfeld modular form of weight $k$ and type $m$ for a congruence subgroup $\Gamma$} if 
        \[
            (f\|_{k,m}\gamma)(\boldsymbol{\omega})=f(\boldsymbol{\omega}).
        \]
        We denote by $\mathcal{W}^r_{k,m}(\Gamma)$ the $\mathbb{C}_\infty$-vector space spanned by all rank $r$ weak Drinfeld modular forms of weight $k$ and type $m$ for the congruence subgroup $\Gamma$. We further set
        \[
            \mathcal{W}^r_k(\Gamma):=\bigcup_{m\in\mathbb{Z}/(q-1)\mathbb{Z}}\mathcal{W}^r_{k,m}(\Gamma)
        \]
        and
        \[
            \mathcal{W}^r(\Gamma):=\sum_{k\in\mathbb{Z}}\mathcal{W}^r_k(\Gamma)
        \]
        to be the $\mathbb{C}_\infty$-algebra generated by all rank $r$ weak Drinfeld modular forms for $\Gamma$.}
    \end{definition}

     Consider the unipotent embedding
    \begin{align*}
        \iota:A^{r-1}&\to\GL_r(A)\\
        (a_2,\dots,a_r)&\mapsto\begin{pmatrix}
            1 & a_2 & \cdots & a_r\\
             & \ddots & & \\
             & & \ddots & \\
             & & & 1
        \end{pmatrix}.
    \end{align*}
    For any congruence subgroup $\Gamma\subset\GL_r(A)$, we set $\Gamma_U:=\iota(A^{r-1})\cap\Gamma\subset\GL_r(A)$ and $\Lambda_\Gamma:=\iota^{-1}(\Gamma_U)\subset A^{r-1}$. For $\boldsymbol{\omega}=(w_1,\dots,w_{r-1},1)^{\tr}\in\Omega^r$, we let $\widetilde{\boldsymbol{\omega}}:=(w_2,\dots,w_{r-1},1)^{\tr}\in\Omega^{r-1}$. By a slight abuse of notation, we also write 
    \[
    \boldsymbol{\omega}=(w_1,w_2,\dots,w_{r-1},1)^{\tr}=(w_1,\widetilde{\boldsymbol{\omega}})^\tr\in \Omega^r.    
    \]
    We define 
    \[
        \Lambda_\Gamma\widetilde{\boldsymbol{\omega}}:=\{a_2w_2+\cdots+a_{r-1}w_{r-1}+a_r\in\mathbb{C}_\infty\mid(a_2,\dots,a_r)\in\Lambda_{\Gamma}\}
    \]
    to be an $A$-lattice of rank $r-1$ inside $\mathbb{C}_\infty$. Let $\widetilde{\pi}\in \mathbb{C}_{\infty}^{\times}$ be the Carlitz period defined in \S1. The parameter at infinity for $\Gamma$ is defined to be the function
    \begin{align*}
        u_\Gamma:\Omega^r&\to\mathbb{C}_\infty\\
        \boldsymbol{\omega}=(w_1,\widetilde{\boldsymbol{\omega}})^{\tr}&\mapsto u_{\Gamma}(\boldsymbol{\omega}):=\frac{1}{\exp_{\Lambda_{\Gamma}\widetilde{\pi}\widetilde{\boldsymbol{\omega}}}(\widetilde{\pi}w_1)}.
    \end{align*}
In particular, when $\Gamma=\Gamma(1)$, we have $u(\ww):=u_{\Gamma(1)}(\ww)=\exp_{A^{r-1}\widetilde{\pi}\ww}(\widetilde{\pi}w_1)^{-1}$ and, more generally, when $\Gamma=\Gamma(N)$ for some monic irreducible polynomial $N\in A$ of positive degree, we have 
\[
u_{N}(\ww):=u_{\Gamma(N)}(\ww)=\exp_{A^{r-1}\widetilde{\pi}\ww}\left(\frac{\widetilde{\pi}w_1}{N}\right)^{-1}.
\]
    
    A rigid analytic function $f:\Omega^r\to\mathbb{C}_\infty$ is called \emph{$\Gamma_U$-invariant} if for any $\boldsymbol{\omega}\in\Omega^r$ and $\gamma\in \Gamma_U$, we have
    \[
        f(\boldsymbol{\omega})=f(\gamma\cdot\boldsymbol{\omega}).
    \]
    It is known by \cite[Prop. 5.4]{BBP21} that any $\Gamma_U$-invariant rigid analytic function $f:\Omega^r\to\mathbb{C}_\infty$ admits a unique \emph{$u_\Gamma$-expansion}, namely for each $n\in\mathbb{Z}$, we have a unique rigid analytic function $f_n:\Omega^{r-1}\to\mathbb{C}_\infty$ so that
    \[
        f(\boldsymbol{\omega})=\sum_{n\in\mathbb{Z}}f_n(\widetilde{\boldsymbol{\omega}})u_\Gamma(\boldsymbol{\omega})^n
    \]
    whenever  $\boldsymbol{\omega}=(w_1,\widetilde{\boldsymbol{\omega}})^\tr\in\Omega^r$  lies in some neighborhood of infinity. By \cite[Prop. 3.2.7]{Bas14}, if $f\in\mathcal{W}^r(\Gamma)$, then, for each $n\geq 0$, we have $f_n\in\mathcal{W}^{r-1}(\widetilde{\Gamma})$ for some congruence subgroup $\widetilde{\Gamma}\in \GL_{r-1}(A)$. For $r=1$, we adopt the convention that $\Omega^1:=\{1\}$ and $\mathcal{O}(\Omega^1):=\mathbb{C}_\infty$ and for $r\geq 2$, we let $\mathcal{O}(\Omega^r)$ be the ring of $\mathbb{C}_\infty$-valued rigid analytic functions on $\Omega^r$. Then $u_\Gamma$-expansion induces an injection
    \begin{equation}\label{E:injection}
    \begin{split}
        \{f\in\mathcal{O}(\Omega^r)\mid f~\text{is}~\Gamma_U\text{-invariant}\}&\to\mathcal{O}(\Omega^{r-1})\llbracket X,X^{-1}\rrbracket\\
        f&\mapsto\mathscr{F}_f:=\sum_{n\in\mathbb{Z}}f_nX^n.
    \end{split}
    \end{equation}
Given $f\in\mathcal{O}(\Omega^r)$ that is $\Gamma_U$-invariant, we say that $f$ is \emph{holomorphic at infinity with respect to $\Gamma$} if $\mathscr{F}_f\in\mathcal{O}(\Omega^{r-1})\llbracket X\rrbracket$. Observe that every weak Drinfeld modular form for $\Gamma$ is automatically $\Gamma_U$-invariant.
    \begin{definition}\textnormal{
        A rank $r$ weak Drinfeld modular form $f$ of weight $k$ and type $m$ for a congruence subgroup $\Gamma$ is called \emph{a rank $r$ Drinfeld modular form of weight $k$ and type $m$ for $\Gamma$} if for each $\alpha\in\Gamma(1)$, $f\|_{k,m}\alpha$ is holomorphic at infinity with respect to $\alpha^{-1}\Gamma\alpha$. We denote by $\mathcal{M}^r_{k,m}(\Gamma)$ the $\mathbb{C}_\infty$-vector space spanned by all rank $r$ Drinfeld modular forms of weight $k$ and type $m$ for the congruence subgroup $\Gamma$. We also let
        \[
            \mathcal{M}^r_k(\Gamma):=\bigcup_{m\in\mathbb{Z}/(q-1)\mathbb{Z}}\mathcal{M}^r_{k,m}(\Gamma)
        \]
        and set
        \[
            \mathcal{M}^r(\Gamma):=\sum_{k\in\mathbb{Z}}\mathcal{M}^r_k(\Gamma)
        \]
         to be the $\mathbb{C}_\infty$-algebra generated by all rank $r$ Drinfeld modular forms for $\Gamma$. Since, for every monic polynomial $N$ of positive degree and $\gamma \in \Gamma(N)$, we have $\det(\gamma)=1$, we simply write
         \[
         \mathcal{M}^r_{k}(\Gamma(N)):=\mathcal{M}^r_{k,0}(\Gamma(N)).
         \]}
    \end{definition}

We finish this subsection by providing several examples of Drinfeld modular forms.
\begin{example}\label{Ex:1}
\begin{itemize}
    \item[(i)]  \textnormal{Given $\boldsymbol{\omega}=(w_1,\dots,w_{r-1},1)^{\tr}\in\Omega^r$, we denote by $\phi^{\bomega}$ the Drinfeld module corresponding to the $A$-lattice 
    $        \Lambda_{\boldsymbol{\omega}}.
    $
     Given $a\in\mathbb{F}_q[t]$, we have
    \[
        \phi^{\boldsymbol{\omega}}_a(X)=a(\theta)X+g_{1,a}(\boldsymbol{\omega})X^q+\cdots+g_{r\deg(a),a}(\boldsymbol{\omega})X^{q^{r\deg(a)}}.
    \]
   For each $1\leq i\leq r\deg(a)$, the function $g_{i,a}:\Omega^r\to\mathbb{C}_\infty$ is a rank $r$ Drinfeld modular form of weight $q^i-1$ and type $0$ for $\Gamma(1)$. For each $1\leq \mu \leq d\deg(a) $, we call each $g_{\mu,a}$ \emph{a coefficient form}.  When $a=t$, we set $g_i:=g_{i,t}$ for any $1\leq i\leq r$. In fact, by \cite[(3.3), (3.4) and (3.5)]{BB17}, for any $a\in\mathbb{F}_q[t]$ and $1\leq i\leq r\deg(a)$, we have
    \[
        g_{i,a}\in A[g_1,\dots,g_r].
    \]
}
    \item[(ii)] \textnormal{Let $N$ be a monic polynomial in $A$. We set $\mathcal{T}_N:=(N^{-1}A/A)^r\setminus\{0\}$. For each $\mathbf{u}=(u_1,\dots,u_r)\in\mathcal{T}_N$ and $\boldsymbol{\omega}=(w_1,\dots,w_{r-1},1)^{\tr}\in\Omega^r$, we write
\[
(\boldsymbol{a}+\bu)\cdot\boldsymbol{\omega}:=(a_1+u_1)w_1+\cdots+(a_{r-1}+u_{r-1})w_{r-1}+(a_r+u_r).
\]
    Then we define \emph{the weight one Eisenstein series for $\Gamma(N)$} by $E_{\mathbf{u},N}:\Omega^r\to\mathbb{C}_\infty$ by
    \[
        E_{\mathbf{u},N}(\boldsymbol{\omega}):=\sum_{\boldsymbol{a}=(a_1,\dots,a_r)\in A^r}\frac{1}{(\boldsymbol{a}+\bu)\cdot\boldsymbol{\omega}}.
    \]
    Note that $E_{\mathbf{u},N}$ is a nowhere vanishing Drinfeld modular form of weight one for $\Gamma(N)$ and hence lies in $\mathcal{M}^r_{1}(\Gamma(N))$. For simplicity, when $N=\theta$, we further write $E_{\mathbf{u}}:=E_{\mathbf{u},\theta}$}. 
\end{itemize}

\end{example}

We finish this section with a crucial lemma that will be later used in \S3 to obtain Theorem \ref{Thm:Special_Values_to_Periods}. We note that Lemma \ref{L:integral} can be also deduced from \cite[Thm. 5.4, (7.20)]{Gek25}. For the convenience of the reader, we provide the details of its proof.

\begin{lemma}[{cf. \cite[Thm. 5.4]{Gek25}}]\label{L:integral} Let $f\in \mathcal{M}_k^r(\Gamma(N))$ for $k\in \mathbb{Z}_{\geq 0}$ and some monic polynomial $N\in A$ of positive degree. Then $f$ is integral over $\mathbb{C}_{\infty}[g_1,\dots,g_r]$. In particular, $f$ is integral over the $\mathbb{C}_{\infty}$-algebra generated by all the Eisenstein series $E_{\mathbf{u}}$ where $\mathbf{u}\in\mathcal{T}_{\theta}$.
\end{lemma}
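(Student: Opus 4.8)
The plan is to descend from level $\Gamma(N)$ to the full level $\Gamma(1)$ by a norm (elementary symmetric function) argument, reduce integrality to the known structure of $\mathcal{M}^r(\Gamma(1))$, and finally rewrite the coefficient forms $g_1,\dots,g_r$ in terms of the Eisenstein series via the $\theta$-torsion of $\phi^{\bomega}$. To set up the descent, I would fix coset representatives $\gamma_1=1,\gamma_2,\dots,\gamma_m$ for the finite quotient $\Gamma(1)/\Gamma(N)$ and put $f_i:=f\|_{k,m}\gamma_i$. Using the cocycle relation $(f\|_{k,m}\gamma)\|_{k,m}\delta=f\|_{k,m}(\gamma\delta)$ together with the normality of $\Gamma(N)$, the set $\{f_1,\dots,f_m\}$ depends only on the cosets and is permuted by the $\|_{k,m}$-action of $\Gamma(1)$. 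Consequently the elementary symmetric functions $\sigma_1,\dots,\sigma_m$ of $f_1,\dots,f_m$ are $\|_{jk,jm}$-invariant under $\Gamma(1)$, so each $\sigma_j$ is a weak modular form of weight $jk$ for $\Gamma(1)$.

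The delicate point is that these $\sigma_j$ are in fact honest (holomorphic) Drinfeld modular forms for $\Gamma(1)$. Since $k\geq 0$ and each $f_i$ is holomorphic at infinity with respect to $\Gamma(N)$ (this is precisely the growth condition defining the Drinfeld modular form $f$, applied to the translates), the products $\prod_{i\in S}f_i$, and hence the $\sigma_j$, are holomorphic at infinity with respect to $\Gamma(N)$. As $\sigma_j$ is moreover $\Gamma(1)_U$-invariant, it also has a $u_{\Gamma(1)}$-expansion; near infinity $u_{\Gamma(1)}$ is a positive $q$-power of $u_N$ times a unit power series in $u_N$, so a nonzero negative-index term of the $u_{\Gamma(1)}$-expansion would produce a lowest-order negative power of $u_N$ that cannot cancel, contradicting the absence of negative $u_N$-terms. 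Hence the $u_{\Gamma(1)}$-expansion has no negative terms and $\sigma_j\in\mathcal{M}^r_{jk}(\Gamma(1))$. Then $f=f_1$ is a root of the monic polynomial $\prod_{i=1}^m(X-f_i)=X^m-\sigma_1X^{m-1}+\dots+(-1)^m\sigma_m$ with coefficients in $\mathcal{M}^r(\Gamma(1))$, so $f$ is integral over $\mathcal{M}^r(\Gamma(1))$.

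Next I would invoke the structure of the ring of modular forms of full level: its type-$0$ part is $\CC_\infty[g_1,\dots,g_r]$, and since $\det\gamma\in\FF_q^\times$ for $\gamma\in\Gamma(1)$, every form $h$ has $h^{q-1}$ of type $0$, so the whole of $\mathcal{M}^r(\Gamma(1))$ is integral over $\CC_\infty[g_1,\dots,g_r]$ (this is [Gek25, Thm.~5.4], cf. [BBP21]). By transitivity of integrality, $f$ is integral over $\CC_\infty[g_1,\dots,g_r]$, which is the first assertion. For the ``in particular'' clause I would use the $\theta$-torsion of $\phi^{\bomega}$: from the identity $1/e_{\Lambda}(z)=\sum_{\lambda\in\Lambda}(z-\lambda)^{-1}$ one gets $E_{\mathbf{u}}(\bomega)=1/e_{\Lambda_{\bomega}}(\mathbf{u}\cdot\bomega)$, and $\lambda_{\mathbf{u}}:=e_{\Lambda_{\bomega}}(\mathbf{u}\cdot\bomega)$ runs bijectively over the nonzero $\theta$-torsion points of $\phi^{\bomega}$ as $\mathbf{u}$ runs over $\mathcal{T}_\theta$. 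Factoring the $\FF_q$-linear polynomial $\phi^{\bomega}_\theta$ over its kernel yields $\phi^{\bomega}_\theta(X)=\theta X\prod_{\mathbf{u}\in\mathcal{T}_\theta}\bigl(1-E_{\mathbf{u}}(\bomega)X\bigr)$, and reading off the coefficient of $X^{q^i}$ (the only surviving monomials by $\FF_q$-linearity) identifies $g_i$ with $\theta$ times the elementary symmetric polynomial of degree $q^i-1$ in $\{E_{\mathbf{u}}\}_{\mathbf{u}\in\mathcal{T}_\theta}$. Thus $g_1,\dots,g_r\in\CC_\infty[E_{\mathbf{u}}\mid\mathbf{u}\in\mathcal{T}_\theta]$, so the monic equation for $f$ already has coefficients in this larger algebra, and $f$ is integral over it.

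The main obstacle is the descent step, specifically verifying that the symmetric functions $\sigma_j$ are holomorphic at infinity for $\Gamma(1)$ rather than merely weak: this is where the hypothesis $k\geq 0$ and the comparison of the $u_{\Gamma(N)}$- and $u_{\Gamma(1)}$-expansions are essential. The other substantial input is the structure theorem for $\mathcal{M}^r(\Gamma(1))$, which I would cite from [Gek25]/[BBP21] rather than reprove; everything else (the cocycle computation and the torsion factorization giving $g_i$ as elementary symmetric polynomials in the $E_{\mathbf{u}}$) is elementary once the framework of \S2.4 is in place.
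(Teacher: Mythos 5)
Your proof is correct and follows essentially the same route as the paper's: both form the monic polynomial $\prod_{\delta\in\Gamma(1)/\Gamma(N)}(X-f\|\delta)$ over the coset representatives, use normality of $\Gamma(N)$ to see that its coefficients are $\Gamma(1)$-invariant Drinfeld modular forms and hence lie in $\mathbb{C}_{\infty}[g_1,\dots,g_r]$ by the structure theorem for full level, and then deduce integrality over the Eisenstein algebra from the expression of $g_i$ in terms of the $E_{\mathbf{u}}$ (which the paper cites from Gekeler rather than rederiving via the $\theta$-torsion factorization). The only differences are expository: you spell out the holomorphy-at-infinity of the symmetric functions and the reduction to type $0$ explicitly, whereas the paper delegates these steps to citations of \cite{BBP21}.
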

\begin{proof} We start with noting that, by \cite[Thm. 17.5(a)]{BBP21}, the $\mathbb{C}_{\infty}$-algebra of Drinfeld modular forms of type $0$ is generated by $g_1,\dots,g_r$ over $\mathbb{C}_{\infty}$. Now, for any $\ell\in \mathbb{Z}_{\geq 0}$, letting $(\mathcal{M}_{\ell}^r(\Gamma(N)))^{\Gamma(1)}$ be the $\mathbb{C}_{\infty}$-vector space of elements in $\mathcal{M}_{\ell}^r(\Gamma(N))$ invariant under the operator $\|_{\ell,0}\gamma$ for any $\gamma \in \Gamma(1)$, by \cite[(6.7)]{BBP21}, we see that 
\begin{equation}\label{E:inv}
    (\mathcal{M}_{\ell}^r(\Gamma(N)))^{\Gamma(1)}=\mathcal{M}_{\ell,0}^r(\Gamma(1)).
    \end{equation}
Recall that $\Gamma(N)$ is of finite index in $\Gamma(1)$, say $\mathfrak{d}:=|\Gamma(1)/\Gamma(N)|$. We consider the polynomial
\[
\mathfrak{p}_f(X):=\prod_{\delta\in \Gamma(1)/\Gamma(N)}(X-f\|_{k,0}\delta)\in \mathcal{O}(\Omega^r)[X].
\]
    For each $0\leq i \leq \mathfrak{d}$, let $\mathfrak{p}_i$ be the $i$-th coefficient of $\mathfrak{p}_f$. Then $\mathfrak{p}_{\mathfrak{d}}=1$. Note that since $\Gamma(N)$ is a normal subgroup of $\Gamma(1)=\GL_r(A)$, for any $\gamma\in \Gamma(1)$, we have $\gamma^{-1}\Gamma(N)\gamma=\Gamma(N)$ and hence by \cite[Prop. 6.6]{BBP21}, we obtain $f\|_{k,0}\gamma\in \mathcal{M}_k^r(\Gamma(N))$. In particular, for $i<\mathfrak{d}$, we have $\mathfrak{p}_i\in \mathcal{M}^{r}_{k(\mathfrak{d}-i)}(\Gamma(N))$. On the other hand, by \cite[(1.6)]{BBP21} and the construction of $\mathfrak{p}_i$, we see that $
    \mathfrak{p}_i\|_{k(\mathfrak{d}-i),0}\gamma= \mathfrak{p}_i.$
    Hence by \eqref{E:inv}, we have $\mathfrak{p}_i\in \mathcal{M}_{k(\mathfrak{d}-i),0}^r(\Gamma(1))$. Since $\mathfrak{p}_f$ is monic and $\mathfrak{p}_f(f)=0$, $f$ is integral over $\mathbb{C}_{\infty}[g_1,\dots,g_r]$. The second assertion follows from the first assertion and \cite[(3.5), (3.6)]{Gek17}.
\end{proof}
    
\section{Arithmetic Drinfeld modular forms}
    
    The goal of this section is to show that the special values of arithmetic Drinfeld modular forms at CM points are given by algebraic multiple of periods of CM Drinfeld modules defined over $\oK$.  
    
    Let $\Gamma$ be a congruence subgroup of $\Gamma(1)$. We first set $\mathcal{AW}^1(\Gamma):=\oK$. Let $r\geq 2$. We call $f\in \mathcal{W}^r(\Gamma)$  \emph{an arithmetic weak Drinfeld modular form} if $ \mathscr{F}_f\in\mathcal{AW}^{r-1}(\Gamma)\llbracket X,X^{-1}\rrbracket $. We denote by $\mathcal{AW}^r(\Gamma)$ the $\oK$-algebra of arithmetic weak Drinfeld modular forms. 
    
    By \cite[Prop. 3.2.7]{Bas14}, note that if $f\in\mathcal{W}^r(\Gamma)$, then we have $\mathscr{F}_f\in\mathcal{W}^{r-1}(\Gamma)\llbracket X,X^{-1}\rrbracket$. Furthermore, we call $f\in \mathcal{M}^r(\Gamma)$ \emph{an arithmetic Drinfeld modular form} if it lies in $\mathcal{AW}^r(\Gamma)$. We denote by  
    \[
        \mathcal{AM}^r(\Gamma):=\mathcal{M}^r(\Gamma)\cap\mathcal{AW}^{r}(\Gamma)
    \]
    the $\oK$-algebra of the rank $r$ arithmetic Drinfeld modular forms for $\Gamma$. 
    Observe that every element $f\in\mathcal{AM}^r(\Gamma)$ must satisfy $\mathscr{F}_f\in\mathcal{AW}^{r-1}(\Gamma)\llbracket X\rrbracket$.

\begin{example}\label{Ex:2} \textnormal{Using an appropriate normalization, all coefficient forms are arithmetic. More precisely, we define
    \[
        g^{\mathrm{ari}}_{i,a}:=\widetilde{\pi}^{1-q^{i}}g_{i,a}.
    \]
    Then by \cite[Thm.~1.1]{Sug18}, we have
    \[
        g^{\mathrm{ari}}_{i,a}\in\mathcal{AM}^r\big(\Gamma(1)\big).
    \]}
\end{example}
    Now we extend the notion of arithmeticity to meromorphic functions on $\Omega^r$. For each $k\in\mathbb{Z}$, we set
    \[
        \mathcal{F}^r_{k}(\Gamma):=\mathrm{Span}_{\mathbb{C}_\infty}\left(f/g\mid f\in\mathcal{M}^r_{k+\mu}(\Gamma),~g\in\mathcal{M}^r_{\mu}(\Gamma),~\mu\geq 1\right)
    \]
    to be the $\mathbb{C}_\infty$-vector space spanned by rank $r$ meromorphic Drinfeld modular forms of weight $k$. The elements of $\mathcal{F}^r_0(\Gamma)$ are called \emph{Drinfeld modular functions for $\Gamma$}. We further denote by $\mathcal{AF}^r_k(\Gamma)$ the $\mathbb{C}_\infty$-vector space spanned by \emph{arithmetic meromorphic Drinfeld modular forms for $\Gamma$} which is given by
    \[
        \mathcal{AF}^r_k(\Gamma):=\mathrm{Span}_{\mathbb{C}_\infty}\left(f/g\mid f\in\mathcal{AM}^r_{k+\ell}(\Gamma),~g\in\mathcal{AM}^r_{\mu}(\Gamma),~\mu\geq 1\right).
    \]

\subsection{Arithmetic Drinfeld modular forms with level $N$} As before, let $N\in A$ be a monic polynomial.  The goal of this subsection is to study the $\oK$-algebra of arithmetic Drinfeld modular forms with level $N$. We begin with analyzing the $u_{\Gamma(N)}$-expansion $\mathscr{F}_{E_{\mathbf{u},N}}$ of $E_{\mathbf{u},N}$ for any $\mathbf{u}\in \mathcal{T}_N$. For $0\neq a\in\mathbb{F}_q[t]$ and $\widetilde{\boldsymbol{\omega}}\in\Omega^{r-1}$, we set $d(a):=(r-1)\deg(a)$ and define the reciprocal polynomial
    \begin{align*}
        f^{\widetilde{\boldsymbol{\omega}}}_a(X)&:=X^{q^{d(a)}}\widetilde{g}_{d(a),a}(\widetilde{\boldsymbol{\omega}})^{-1}\phi^{\Tilde{\boldsymbol{\omega}}}_a(X^{-1})\\
        &=1+\frac{\widetilde{g}_{d(a)-1,a}(\widetilde{\boldsymbol{\omega}})}{\widetilde{g}_{d(a),a}(\widetilde{\boldsymbol{\omega}})}X^{q^{d(a)}-q^{d(a)-1}}+\cdots+\frac{a(\theta)}{\widetilde{g}_{d(a),a}(\widetilde{\boldsymbol{\omega}})}X^{q^{d(a)}-1},
    \end{align*}
    where $\widetilde{g}_{i,a}:\Omega^{r-1}\to\mathbb{C}_\infty$ is a coefficient form sending each $\tomega\in \Omega^{r-1}$ to the $i$-th coefficient of $\phi_a^{\tomega}$. It induces a map
    \begin{align*}
        f^{(\cdot)}_a:\Omega^{r-1}&\to 1+X\mathbb{C}_\infty[X]\\
        \widetilde{\boldsymbol{\omega}}&\mapsto f^{\widetilde{\boldsymbol{\omega}}}_a(X).
    \end{align*}
    
    Let us denote by $\phi^{\widetilde{\pi}\widetilde{\boldsymbol{\omega}}}$ the Drinfeld module corresponding to the lattice $\widetilde{\pi}\Lambda_{\widetilde{\boldsymbol{\omega}}}$ generated by the elements $\tilde{\pi}w_2,\dots,\tilde{\pi}w_{r-1},\tilde{\pi}$ over $A$. Using the relation (see \cite[\S4]{Gek89})
    \[
    \exp_{\phi^{\widetilde{\pi}\tomega}}(X)=\widetilde{\pi}\exp_{\phi^{\tomega}}(X)
    \]
    as well as the functional equation \eqref{E:exp}, we see that
    \begin{align*}
        \phi^{\widetilde{\pi}\widetilde{\boldsymbol{\omega}}}_a(X)&=\widetilde{\pi}\phi_a^{\widetilde{\boldsymbol{\omega}}}(\widetilde{\pi}^{-1}X)=a(\theta)X+\widetilde{g}_{1,a}^{\mathrm{ari}}(\widetilde{\boldsymbol{\omega}})X^q+\cdots+\widetilde{g}_{d(a),a}^{\mathrm{ari}}(\widetilde{\boldsymbol{\omega}})X^{q^{d(a)}}.
    \end{align*}
    We further define its reciprocal polynomial
    \begin{align*}
        f^{\widetilde{\pi}\widetilde{\boldsymbol{\omega}}}_a(X)&:=X^{q^{d(a)}}\widetilde{g}_{d(a),a}^{\mathrm{ari}}(\widetilde{\boldsymbol{\omega}})^{-1}\phi^{\widetilde{\pi}\Tilde{\boldsymbol{\omega}}}_a(X^{-1})\\
        &=f^{\widetilde{\boldsymbol{\omega}}}_a(\widetilde{\pi}X)\\
        &= 1+\frac{\widetilde{g}_{d(a)-1,a}^{\mathrm{ari}}(\widetilde{\boldsymbol{\omega}})}{\widetilde{g}_{d(a),a}^{\mathrm{ari}}(\widetilde{\boldsymbol{\omega}})}X^{q^{d(a)}-q^{d(a)-1}}+\cdots+\frac{a(\theta)}{\widetilde{g}_{d(a),a}^{\mathrm{ari}}(\widetilde{\boldsymbol{\omega}})}X^{q^{d(a)}-1}.
    \end{align*}
    It also induces a map
    \begin{align*}
        f^{\widetilde{\pi}(\cdot)}_a:\Omega^{r-1}&\to 1+\mathbb{C}_\infty[X]X\\
        \widetilde{\boldsymbol{\omega}}&\mapsto f^{\widetilde{\pi}\widetilde{\boldsymbol{\omega}}}_a(X).
    \end{align*}
    
In what follows, when $r\geq 3$, to distinguish certain congruence subgroups of $\GL_{r-1}(A)$ and $\GL_r(A)$, for any monic polynomial $N$ in $A$, we further let $\widetilde{\Gamma}(N)\subset \GL_{r-1}(A)$ be the subgroup of elements in the kernel of the mod $N$ map. 

Our next result is a finer description of $f^{\widetilde{\pi}(\cdot)}_a$. For the convenience of the reader, for $1\leq i \leq r-1$, we recall that 
    \[
\widetilde{g}^{\mathrm{ari}}_{i,a}:=\widetilde{\pi}^{1-q^{i}}\widetilde{g}_{i,a}\in \mathcal{AM}^{r-1}(\widetilde{\Gamma}(1))
    \]  
    and
    \[
\widetilde{g}^{\mathrm{ari}}_{i}:=\widetilde{\pi}^{1-q^{i}}\widetilde{g}_{i,t}\in \mathcal{AM}^{r-1}(\widetilde{\Gamma}(1)).
    \]
    \begin{lemma}\label{L:recpoly}
        For $0\neq a\in\mathbb{F}_q[t]$, we have
        \[
            f^{\widetilde{\pi}(\cdot)}_a\in 1+A[\widetilde{g}_1^{\mathrm{ari}},\dots,\widetilde{g}_{r-2}^{\mathrm{ari}},(\widetilde{g}_{r-1}^{\mathrm{ari}})^{\pm 1}][X]X\subset 1+\mathcal{AW}^{r-1}(\widetilde{\Gamma}(1))[X]X.
        \]
    \end{lemma}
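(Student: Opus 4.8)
The plan is to establish the claimed containment by analyzing the coefficients of the reciprocal polynomial $f^{\widetilde{\pi}\widetilde{\boldsymbol{\omega}}}_a(X)$ and showing each is an element of $A[\widetilde{g}_1^{\mathrm{ari}},\dots,\widetilde{g}_{r-2}^{\mathrm{ari}},(\widetilde{g}_{r-1}^{\mathrm{ari}})^{\pm 1}]$. From the explicit expansion already recorded in the excerpt, the coefficient of $X^{q^{d(a)}-q^{i}}$ in $f^{\widetilde{\pi}\widetilde{\boldsymbol{\omega}}}_a(X)$ is the ratio $\widetilde{g}^{\mathrm{ari}}_{i,a}/\widetilde{g}^{\mathrm{ari}}_{d(a),a}$ (with the constant term $1$ and the appropriate interpretation for $i=0$). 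So the heart of the matter is to control these ratios of arithmetic coefficient forms. First I would treat the case $a=t$, where $d(t)=r-1$ and the coefficients are precisely $\widetilde{g}^{\mathrm{ari}}_i/\widetilde{g}^{\mathrm{ari}}_{r-1}$ for $1\leq i\leq r-1$; since $\widetilde{g}^{\mathrm{ari}}_{r-1}$ is invertible in the target ring by construction (we have adjoined its inverse), each such ratio lies in $A[\widetilde{g}_1^{\mathrm{ari}},\dots,\widetilde{g}_{r-2}^{\mathrm{ari}},(\widetilde{g}_{r-1}^{\mathrm{ari}})^{\pm 1}]$, which gives the base case immediately.

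Next I would handle general $a\in\mathbb{F}_q[t]\setminus\{0\}$ by reducing to the generators $\widetilde{g}^{\mathrm{ari}}_1,\dots,\widetilde{g}^{\mathrm{ari}}_{r-1}$. The key input is Example~\ref{Ex:1}(i), namely that for any $a$ the coefficient forms $\widetilde{g}_{i,a}$ lie in $A[\widetilde{g}_1,\dots,\widetilde{g}_{r-1}]$ via the relations $g_{i,a}\in A[g_1,\dots,g_r]$ from \cite[(3.3),(3.4),(3.5)]{BB17}, applied in rank $r-1$. I would then track the weights: $\widetilde{g}_{i,a}$ has weight $q^i-1$, so the normalization $\widetilde{g}^{\mathrm{ari}}_{i,a}=\widetilde{\pi}^{1-q^i}\widetilde{g}_{i,a}$ is weight-homogeneous of the same degree, and the arithmetic normalization is compatible with the polynomial relations because $\widetilde{\pi}$-powers match up weight by weight. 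This means $\widetilde{g}^{\mathrm{ari}}_{i,a}\in A[\widetilde{g}^{\mathrm{ari}}_1,\dots,\widetilde{g}^{\mathrm{ari}}_{r-1}]$ for every $a$ and every $i$. Consequently each coefficient $\widetilde{g}^{\mathrm{ari}}_{i,a}/\widetilde{g}^{\mathrm{ari}}_{d(a),a}$ of $f^{\widetilde{\pi}\widetilde{\boldsymbol{\omega}}}_a$ is a ratio of two elements of this polynomial ring.

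The main obstacle I anticipate is precisely controlling this denominator $\widetilde{g}^{\mathrm{ari}}_{d(a),a}$: a priori it is only a polynomial in $\widetilde{g}^{\mathrm{ari}}_1,\dots,\widetilde{g}^{\mathrm{ari}}_{r-1}$, not obviously a unit times a power of $\widetilde{g}^{\mathrm{ari}}_{r-1}$, so the ratios need not lie in the claimed localized ring unless one identifies $\widetilde{g}^{\mathrm{ari}}_{d(a),a}$ up to the admissible inversion. To resolve this I would use the recursive structure of the highest coefficient forms: the leading coefficient $\widetilde{g}_{d(a),a}$ of $\phi^{\widetilde{\boldsymbol{\omega}}}_a$ is a product of Frobenius twists of $\widetilde{g}_{r-1}=\widetilde{g}_{r-1,t}$ coming from iterating the relation $\phi^{\widetilde{\boldsymbol{\omega}}}_{ab}=\phi^{\widetilde{\boldsymbol{\omega}}}_a\phi^{\widetilde{\boldsymbol{\omega}}}_b$ and reading off top-degree terms, so after the arithmetic normalization $\widetilde{g}^{\mathrm{ari}}_{d(a),a}$ becomes an $A$-multiple of a nonnegative power of $\widetilde{g}^{\mathrm{ari}}_{r-1}$ (the multiplicativity in the leading coefficient forces this). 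Granting that identification, every denominator is absorbed into the adjoined inverse $(\widetilde{g}^{\mathrm{ari}}_{r-1})^{-1}$, each coefficient of $f^{\widetilde{\pi}(\cdot)}_a$ lands in $A[\widetilde{g}_1^{\mathrm{ari}},\dots,\widetilde{g}_{r-2}^{\mathrm{ari}},(\widetilde{g}_{r-1}^{\mathrm{ari}})^{\pm 1}]$, and the final inclusion into $1+\mathcal{AW}^{r-1}(\widetilde{\Gamma}(1))[X]X$ follows since each $\widetilde{g}^{\mathrm{ari}}_i\in\mathcal{AM}^{r-1}(\widetilde{\Gamma}(1))\subset\mathcal{AW}^{r-1}(\widetilde{\Gamma}(1))$ by Example~\ref{Ex:2}.
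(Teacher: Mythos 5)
Your proposal is correct and follows essentially the same route as the paper: the paper's proof likewise reduces everything to the explicit identity $\widetilde{g}_{d(a),a}^{\mathrm{ari}}=a_s(\widetilde{g}_{r-1}^{\mathrm{ari}})^{(1-q^{d(a)})/(1-q^{r-1})}$ (which is exactly what your multiplicativity argument for the leading coefficient produces), so that the only denominator to absorb is a power of $\widetilde{g}_{r-1}^{\mathrm{ari}}$, and then invokes \cite[(3.3), (3.4), (3.5)]{BB17} after normalization to place the remaining coefficients $\widetilde{g}_{i,a}^{\mathrm{ari}}$ in $A[\widetilde{g}_1^{\mathrm{ari}},\dots,\widetilde{g}_{r-1}^{\mathrm{ari}}]$. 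Your extra check that the $\widetilde{\pi}$-normalization is weight-homogeneous and hence compatible with those polynomial relations is precisely what the paper's phrase ``after normalization'' is silently using.
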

    
    \begin{proof}
        Assume that $a=a_0+a_1t+\cdots+a_st^s$ for some $a_j\in\mathbb{F}_q$ with $a_s\neq 0$. Then we must have 
        \[
            \widetilde{g}_{d(a),a}^{\mathrm{ari}}=a_s(\widetilde{g}_{r-1}^{\mathrm{ari}})^{\frac{1-q^{d(a)}}{1-q^{r-1}}}.
        \]
        In particular, we obtain
        \begin{equation}\label{Eq:inverse_of_leading_term}
            (\widetilde{g}_{d(a),a}^{\mathrm{ari}})^{-1}\in A[(\widetilde{g}_{r-1}^{\mathrm{ari}})^{-1}]\subset A[\widetilde{g}_1^{\mathrm{ari}},\dots,\widetilde{g}_{r-2}^{\mathrm{ari}},(\widetilde{g}_{r-1}^{\mathrm{ari}})^{\pm 1}].
        \end{equation}
        The desired result now follows from \eqref{Eq:inverse_of_leading_term} and the fact that $\widetilde{g}_{i,a}^{\mathrm{ari}}\in A[\widetilde{g}_1^{\mathrm{ari}},\dots,\widetilde{g}_{r-1}^{\mathrm{ari}}]$ for each $1\leq i \leq r-1$, which follows from \cite[(3.3), (3.4) and (3.5)]{BB17} after normalization.
    \end{proof}

    The next lemma is to obtain compatibility of the arithmetic property of weak Drinfeld modular forms with the level change.

    \begin{lemma}\label{Lem:Level_Change}  Let $(N_1)$ and $(N_2)$ be principal ideals in $A$ generated by the monic polynomials $N_1$ and $N_2$ respectively. If $(N_1)\subseteq (N_2)$, then
    \[
    \mathcal{AW}^r(\Gamma(N_2))\subseteq\mathcal{AW}^r(\Gamma(N_1)).
    \]
        In particular, for any $0\neq a\in\mathbb{F}_q[t]$, we have
        \[
            f^{\widetilde{\pi}(\cdot)}_a\in 1+\mathcal{AW}^{r-1}\big(\widetilde{\Gamma}(N)\big)[X]X.
        \]
    \end{lemma}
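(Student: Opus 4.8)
The plan is to prove the general containment by induction on the rank $r$ and then deduce the displayed statement about $f^{\tpi(\cdot)}_a$ as the special case $(N_1,N_2)=(N,1)$ applied in rank $r-1$. The base case $r=1$ is immediate, since $\mathcal{AW}^1(\Gamma)=\oK$ for every congruence subgroup $\Gamma$. For the inductive step I would first write $N_1=N_2 M$ with $M\in A$ monic, which is exactly the content of $(N_1)\subseteq(N_2)$, and record that $\Gamma(N_1)\subseteq\Gamma(N_2)$; hence any $f\in\mathcal{W}^r(\Gamma(N_2))$ already lies in $\mathcal{W}^r(\Gamma(N_1))$, and by \cite[Prop. 3.2.7]{Bas14} both its $u_{N_2}$- and $u_{N_1}$-expansions have coefficients that are genuine weak forms for $\widetilde{\Gamma}(N_2)$ and $\widetilde{\Gamma}(N_1)$ respectively. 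Thus the entire task is to upgrade arithmeticity of the $u_{N_2}$-coefficients to arithmeticity of the $u_{N_1}$-coefficients, i.e.\ to control how the parameter at infinity changes under level change.

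The key computation is the comparison of the two parameters. Setting $\Lambda:=A^{r-1}\tpi\tomega$ with associated Drinfeld module $\phi^{\tpi\tomega}$, we have $u_{N_i}(\bomega)^{-1}=\exp_{\phi^{\tpi\tomega}}(\tpi w_1/N_i)$. Since $\tpi w_1/N_2=M\cdot(\tpi w_1/N_1)$ with $M\in A$, choosing $a\in\mathbb{F}_q[t]$ with $a(\theta)=M$ and applying the functional equation \eqref{E:exp} gives $u_{N_2}^{-1}=\phi^{\tpi\tomega}_a(u_{N_1}^{-1})$. Rewriting the right-hand side through the reciprocal polynomial $f^{\tpi\tomega}_a$ and its leading coefficient $\widetilde{g}^{\mathrm{ari}}_{d(a),a}$, where $d(a)=(r-1)\deg(a)$, I would obtain
\[
u_{N_2}=\widetilde{g}^{\mathrm{ari}}_{d(a),a}(\tomega)^{-1}\,u_{N_1}^{q^{d(a)}}\,f^{\tpi\tomega}_a(u_{N_1})^{-1}.
\]
By Lemma~\ref{L:recpoly} one has $f^{\tpi(\cdot)}_a\in 1+\mathcal{AW}^{r-1}(\widetilde{\Gamma}(1))[X]X$, so this exhibits $u_{N_2}=c(\tomega)\,u_{N_1}^{q^{d(a)}}(1+O(u_{N_1}))$ as a substitution whose leading coefficient $c=(\widetilde{g}^{\mathrm{ari}}_{d(a),a})^{-1}$ and all higher coefficients lie in $\mathcal{AW}^{r-1}(\widetilde{\Gamma}(1))$; here I use that the inverse of a unit power series over a commutative ring stays over that ring.

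It then remains to substitute into $\mathscr{F}_f=\sum_n f^{(N_2)}_n X^n$. Because the leading $u_{N_1}$-exponent $nq^{d(a)}$ is strictly increasing in $n$ and the expansion is bounded below, each coefficient of a fixed power of $u_{N_1}$ is a \emph{finite} $\oK$-combination of products of the arithmetic quantities $f^{(N_2)}_n\in\mathcal{AW}^{r-1}(\widetilde{\Gamma}(N_2))$ and the substitution coefficients in $\mathcal{AW}^{r-1}(\widetilde{\Gamma}(1))$. By the inductive hypothesis both $\mathcal{AW}^{r-1}(\widetilde{\Gamma}(N_2))$ and $\mathcal{AW}^{r-1}(\widetilde{\Gamma}(1))$ sit inside $\mathcal{AW}^{r-1}(\widetilde{\Gamma}(N_1))$, which is a $\oK$-algebra, so every such combination lands there, giving $f\in\mathcal{AW}^r(\Gamma(N_1))$. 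The displayed special case follows by taking $N_2=1$, $N_1=N$ in rank $r-1$: Lemma~\ref{L:recpoly} places $f^{\tpi(\cdot)}_a$ in $1+\mathcal{AW}^{r-1}(\widetilde{\Gamma}(1))[X]X$, and the containment $\mathcal{AW}^{r-1}(\widetilde{\Gamma}(1))\subseteq\mathcal{AW}^{r-1}(\widetilde{\Gamma}(N))$ just proven upgrades this to $1+\mathcal{AW}^{r-1}(\widetilde{\Gamma}(N))[X]X$.

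The main obstacle I anticipate is the clean derivation and bookkeeping of this parameter-change formula: verifying that inverting the unit series $f^{\tpi\tomega}_a(u_{N_1})$ and collecting terms yields only finite arithmetic combinations (so the new expansion is well defined coefficient by coefficient), and correctly threading the change of congruence subgroup through the \emph{recursive} definition of arithmeticity, which is precisely what forces the induction on $r$ rather than a single direct argument.
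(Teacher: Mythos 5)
Your proposal is correct and follows essentially the same route as the paper: reduce to expressing $u_{N_2}$ as a power series in $u_{N_1}$ via the functional equation $u_{N_2}^{-1}=\phi^{\widetilde{\pi}\widetilde{\boldsymbol{\omega}}}_{\mathfrak{n}}(u_{N_1}^{-1})$ with $\mathfrak{n}N_2=N_1$, invert the reciprocal polynomial using Lemma~\ref{L:recpoly}, observe that only finitely many terms contribute to each coefficient, and induct on the rank $r$ to place everything in $\mathcal{AW}^{r-1}(\widetilde{\Gamma}(N_1))$. The only cosmetic differences are that you start the induction at $r=1$ rather than at $r=2$ (where the paper invokes the Carlitz module) and that the paper cites Sugiyama for $(\widetilde{g}^{\mathrm{ari}}_{d(\mathfrak{n}),\mathfrak{n}})^{-1}\in\mathcal{AW}^{r-1}(\widetilde{\Gamma}(1))$, which your appeal to Lemma~\ref{L:recpoly} covers.
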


    \begin{proof} Since any element of $\mathcal{AW}^{r}(\Gamma(N_1))$ ($ \mathcal{AW}^{r}(\Gamma(N_2)) $ resp.) admits a unique $u_{N_1}$ ($u_{N_2}$ resp.) expansion, to prove above statements, it suffices to show that $u_{N_2}$ may be written as a power series in $u_{N_1}$ whose coefficients are elements in $\mathcal{AW}^{r-1}(\widetilde{\Gamma})$ for some congruence subgroup $\widetilde{\Gamma}$ of $\GL_{r-1}(A)$.  
    
    Since $N_2|N_1$, there exists $\mathfrak{n}\in A$ such that $\mathfrak{n}N_2=N_1$. 
       By the functional equation \eqref{E:exp}, for $\bomega=(w_1,\tomega)^{\tr}\in \Omega^r$, we have
       \begin{equation}\label{E:expofu}
        \begin{split}
            u_{N_2}(\boldsymbol{\omega})&=\frac{1}{\exp_{A^{r-1}\widetilde{\pi}\tomega}\left(\frac{\widetilde{\pi}w_1}{N_2}\right)}=\frac{1}{\phi^{\widetilde{\pi}\tomega}_{\mathfrak{n}}\left(\exp_{A^{r-1}\widetilde{\pi}\tomega}\left(\frac{\widetilde{\pi}w_1}{N_1}\right)\right)}=\frac{u_{N_1}(\boldsymbol{\omega})^{q^{d(N_1)}}}{\widetilde{g}_{d(\mathfrak{n}),\mathfrak{n}}^{\mathrm{ari}}(\tomega)f_{\mathfrak{n}}^{\widetilde{\pi}\tomega}\big(u_{N_1}(\boldsymbol{\omega})\big)}.
        \end{split}
        \end{equation}
  We will proceed by the induction on $r$. When $r=2$, since $\phi^{\widetilde{\pi}\tomega}$ is the Carlitz module, defined over $A$, and in this case, $ \widetilde{g}_{d(\mathfrak{n}),\mathfrak{n}}^{\mathrm{ari}}(\tomega)=1$, the lemma follows. Assume that the desired assertion holds for $r-1$, that is,
  \[
    \mathcal{AW}^{r-1}(\widetilde{\Gamma}(N_2))\subseteq\mathcal{AW}^{r-1}(\widetilde{\Gamma}(N_1))
    \]  
  provided that $(N_1)\subseteq (N_2)$. By \cite[Prop. 1.3]{Sug18}, we know that $ (\widetilde{g}_{d(\mathfrak{n}),\mathfrak{n}}^{\mathrm{ari}})^{-1} \in \mathcal{AW}^{r-1}(\widetilde{\Gamma}(1))$ and hence, by our induction argument, we have
  \[
(\widetilde{g}_{d(\mathfrak{n}),\mathfrak{n}}^{\mathrm{ari}})^{-1}\in\mathcal{AW}^{r-1}\big(\widetilde{\Gamma}(1)\big)\subseteq \mathcal{AW}^{r-1}\big(\widetilde{\Gamma}(N_2)\big) \subseteq \mathcal{AW}^{r-1}\big(\widetilde{\Gamma}(N_1)\big). 
\]
Moreover, again by our induction argument and by Lemma \ref{L:recpoly}, we have   
\[
f_{\mathfrak{n}}^{\widetilde{\pi}\tomega} \in 1 +\mathcal{AW}^{r-1}\big(\widetilde{\Gamma}(N_1)\big)[X]X.
\]
 Since,  in the expansion of  of $\frac{1}{f_{\mathfrak{n}}^{\widetilde{\pi}\tomega}} $ as a power series in $X$, only finitely many elements in $\mathcal{AW}^{r-1}(\widetilde{\Gamma}(N_1))$ contributes to the each coefficient of $X$, by \eqref{E:expofu}, one obtains
        \[
            u_{N_2}(\boldsymbol{\omega})\in\mathcal{AW}^{r-1}\big(\widetilde{\Gamma}(N_1)\big)[u_{N_1}(\boldsymbol{\omega})]u_{N_1}(\boldsymbol{\omega})^{q^{d(N_1)}}.
        \]
        The desired result now follows.
    \end{proof}
We now set 
\[
E_{\mathbf{u},N}^{\mathrm{ari}}:=\widetilde{\pi}^{-1}E_{\mathbf{u},N}\in \mathcal{M}^r(\Gamma(N)).
\]    
and 
\[
E_{\mathbf{u}}^{\mathrm{ari}}:=\widetilde{\pi}^{-1}E_{\mathbf{u}}\in \mathcal{M}^r(\Gamma(\theta)).
\]
\begin{lemma}\label{Lem:Expansion_of_Eisenstein_Series} 
        For each $\mathbf{u}\in\mathcal{T}_N$, we have
        \[
            \widetilde{\pi}^{-1}\mathscr{F}_{E_\mathbf{u}, N}\in\mathcal{AW}^{r-1}\big(\Gamma(N)\big)\llbracket X\rrbracket.
        \]
        In particular, $E_{\mathbf{u},N}^{\mathrm{ari}}\in\mathcal{AM}^r\big(\Gamma(N)\big)$ and $E_{\mathbf{u}}^{\mathrm{ari}}\in\mathcal{AM}^r\big(\Gamma(\theta)\big)$.
    \end{lemma}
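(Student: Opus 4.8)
The plan is to compute the $u_{\Gamma(N)}$-expansion of $E_{\bu,N}$ explicitly and read off that, after dividing by $\widetilde{\pi}$, every $u_N$-coefficient is an arithmetic weak form of rank $r-1$. I would argue by induction on $r$, the base case $r=2$ being the classical rank-two computation (cf.\ \cite{Gek88}): there $\phi^{\widetilde{\pi}\tomega}$ is the Carlitz module, defined over $A$, the relevant torsion values are algebraic, and $\mathcal{AW}^1(\Gamma(N))=\oK$, so the expansion has coefficients in $\oK$ by direct calculation. For the inductive step, I would first collapse the inner lattice sum: grouping the defining sum of $E_{\bu,N}$ by the first coordinate $b_1=a_1+u_1$, the sum over the remaining coordinates runs over the coset $\widetilde{\bu}\cdot\tomega+\Lambda_{\tomega}$, where $\widetilde{\bu}:=(u_2,\dots,u_r)$, and the partial-fraction identity $1/e_\Lambda(z)=\sum_{\lambda\in\Lambda}1/(z-\lambda)$ (valid because $e_\Lambda$ is $\mathbb{F}_q$-linear with $e_\Lambda'=1$) yields
\[
E_{\bu,N}(\bomega)=\sum_{b_1\in u_1+A}\frac{1}{e_{\Lambda_{\tomega}}\!\big(b_1 w_1+\widetilde{\bu}\cdot\tomega\big)}.
\]

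Dividing by $\widetilde{\pi}$ and using the homothety $\exp_{\phi^{\widetilde{\pi}\tomega}}(\widetilde{\pi}X)=\widetilde{\pi}\exp_{\phi^{\tomega}}(X)$, each summand becomes $1/\exp_\psi(\widetilde{\pi}b_1 w_1+t_{\widetilde{\bu}})$ with $\psi:=\phi^{\widetilde{\pi}\tomega}$ and $t_{\widetilde{\bu}}:=\widetilde{\pi}\,\widetilde{\bu}\cdot\tomega$ an $N$-torsion point of $\psi$. Writing $u_1=c/N$ with $c\in A$, we have $\widetilde{\pi}b_1 w_1=(c+Na_1)y$ with $y:=\widetilde{\pi}w_1/N$, so additivity of $\exp_\psi$ gives $\exp_\psi(\widetilde{\pi}b_1 w_1+t_{\widetilde{\bu}})=\psi_\beta(u_N^{-1})+\exp_\psi(t_{\widetilde{\bu}})$, where $\beta\in\mathbb{F}_q[t]$ satisfies $\beta(\theta)=c+Na_1$ and $u_N=1/\exp_\psi(y)$. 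Now I would feed in Lemma~\ref{L:recpoly}: substituting $\psi_\beta(u_N^{-1})=\widetilde{g}^{\mathrm{ari}}_{d(\beta),\beta}\,u_N^{-q^{d(\beta)}}f^{\widetilde{\pi}\tomega}_\beta(u_N)$ and clearing denominators, each summand equals
\[
\frac{u_N^{q^{d(\beta)}}}{\widetilde{g}^{\mathrm{ari}}_{d(\beta),\beta}}\cdot\Big(f^{\widetilde{\pi}\tomega}_\beta(u_N)+\big(\widetilde{g}^{\mathrm{ari}}_{d(\beta),\beta}\big)^{-1}\exp_\psi(t_{\widetilde{\bu}})\,u_N^{q^{d(\beta)}}\Big)^{-1},
\]
a power series in $u_N$ beginning in degree $q^{d(\beta)}$. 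Since $f^{\widetilde{\pi}\tomega}_\beta(0)=1$, the bracketed factor is invertible as a power series, and its coefficients are polynomials over $\oK$ in the coefficients of $f^{\widetilde{\pi}\tomega}_\beta$, in $(\widetilde{g}^{\mathrm{ari}}_{d(\beta),\beta})^{-1}$, and in the torsion value $\exp_\psi(t_{\widetilde{\bu}})$ --- the last appearing \emph{only} with positive powers of $u_N$.

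By Lemma~\ref{L:recpoly}, \cite[Prop.~1.3]{Sug18} (giving $(\widetilde{g}^{\mathrm{ari}}_{d(\beta),\beta})^{-1}\in\mathcal{AW}^{r-1}(\widetilde{\Gamma}(1))$), and Lemma~\ref{Lem:Level_Change} (to regard level-$1$ forms as level-$N$ forms), all of these ingredients except possibly the torsion value already lie in $\mathcal{AW}^{r-1}(\widetilde{\Gamma}(N))$. As $\deg\beta\to\infty$ the starting degree $q^{d(\beta)}$ tends to infinity, so only finitely many $b_1$ contribute to each power of $u_N$; hence every $u_N$-coefficient is a finite $\oK$-combination of arithmetic forms. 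The case $u_1=0$ isolates the $a_1=0$ term $1/\exp_\psi(t_{\widetilde{\bu}})=\widetilde{\pi}^{-1}E_{\widetilde{\bu},N}(\tomega)$ as the constant term, arithmetic by the induction hypothesis.

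The crux, and the one genuinely new point over the base case, is the arithmeticity of the torsion value $\exp_\psi(t_{\widetilde{\bu}})=\big(\widetilde{\pi}^{-1}E_{\widetilde{\bu},N}\big)^{-1}$ when $\widetilde{\bu}\neq 0$: one must show the reciprocal of the nowhere-vanishing rank-$(r-1)$ arithmetic Eisenstein series is again arithmetic. I would establish this from the induction hypothesis together with the invertibility of its leading $u$-coefficient \emph{inside} $\mathcal{AW}^{r-1}$, which is exactly what Lemma~\ref{L:recpoly} (through the factor $(\widetilde{g}^{\mathrm{ari}}_{r-1})^{\pm1}$) and \cite[Prop.~1.3]{Sug18} supply; thus I would carry this invertibility statement along in the induction. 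Note that here we only need $\exp_\psi(t_{\widetilde{\bu}})$ itself, not its inverse, which is what makes this feasible. Granting it, the displayed expansion shows $\widetilde{\pi}^{-1}\mathscr{F}_{E_{\bu,N}}\in\mathcal{AW}^{r-1}(\Gamma(N))\llbracket X\rrbracket$. The two ``in particular'' assertions are then immediate: $E_{\bu,N}$ is a genuine Drinfeld modular form (Example~\ref{Ex:1}(ii)), so $E^{\mathrm{ari}}_{\bu,N}=\widetilde{\pi}^{-1}E_{\bu,N}\in\mathcal{M}^r(\Gamma(N))\cap\mathcal{AW}^r(\Gamma(N))=\mathcal{AM}^r(\Gamma(N))$, and taking $N=\theta$ yields $E^{\mathrm{ari}}_{\bu}\in\mathcal{AM}^r(\Gamma(\theta))$.
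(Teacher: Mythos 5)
Your argument is correct and follows the same route as the paper's proof: induction on the rank, the logarithmic-derivative identity to collapse the inner lattice sum, substitution of the reciprocal polynomial $f^{\widetilde{\pi}\tomega}_{a}$ from Lemma~\ref{L:recpoly}, a geometric-series inversion, and the observation that only finitely many summands contribute to each power of $u_N$. The one point where you genuinely diverge --- and improve on the printed proof --- is the treatment of the torsion value $\exp_{A^{r-1}\widetilde{\pi}\tomega}(\widetilde{\pi}\tomega\cdot\widetilde{\bu})$. The paper's displayed chain of equalities replaces this quantity by $E^{\mathrm{ari}}_{\widetilde{\bu},N}(\tomega)$ and then cites the induction hypothesis, but the torsion value is the \emph{reciprocal} $\bigl(E^{\mathrm{ari}}_{\widetilde{\bu},N}(\tomega)\bigr)^{-1}$, so arithmeticity of $E^{\mathrm{ari}}_{\widetilde{\bu},N}$ alone does not literally give what is needed. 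You correctly isolate this as the crux and close it by strengthening the induction to carry along the invertibility of the leading $u$-coefficient inside the arithmetic algebra; that invertibility is indeed available, since the leading coefficient is either the inverse of a leading coefficient form (arithmetic by \cite[Prop.~1.3]{Sug18} together with Lemma~\ref{L:recpoly}) when the first entry of $\widetilde{\bu}$ is nonzero, or a lower-rank arithmetic Eisenstein series covered by the strengthened hypothesis one step further down, bottoming out at Carlitz torsion values in $\oK^{\times}$. Your version is therefore, if anything, the more careful of the two; everything else (the base case, the use of Lemma~\ref{Lem:Level_Change}, the finiteness of contributions to each $u_N$-coefficient, and the two ``in particular'' assertions) matches the paper.
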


    
    \begin{proof}
        We will proceed the proof by using the induction on the rank $r$. The initial case $r=2$ is known by \cite[(2.1)]{Gek84} for any monic polynomial $N$. Now we assume that for each $\widetilde{\bu}\in(N^{-1}A/A)^{r-1}$, we have
        \begin{equation}\label{Eq:Induction_Hypothesis}
            E_{\widetilde{\mathbf{u}},N}^{\mathrm{ari}}:=\widetilde{\pi}^{-1}E_{\widetilde{\mathbf{u}},N}\in\mathcal{AM}^{r-1}\big(\widetilde{\Gamma}(N)\big).
        \end{equation}
        Taking the logarithmic derivative of both sides of
        \[
            \exp_{A^{r-1}\widetilde{\pi}\tomega}(z)=z\prod_{\lambda \in A^{r-1}\widetilde{\pi}\tomega} \left(1-\frac{z}{\lambda}\right),
        \] 
        we obtain 
        \begin{equation}\label{E:logder}
            \frac{1}{\exp_{A^{r-1}\widetilde{\pi}\tomega}(z)}=\sum_{b\in A^{r-1}\widetilde{\pi}\tomega}\frac{1}{z-b}.
        \end{equation}
        Let us denote each representative $\mathbf{u}\in\mathcal{T}_N$ by $\mathbf{u}=(u_1,\tilde{\mathbf{u}})=(\frac{v_1}{N},\tilde{\mathbf{u}})$ for some $v_1\in A$ with $\deg(v_1)<\deg(N)$. Then we have 
        \begin{align*}
            \widetilde{\pi}^{-1}E_{\mathbf{u},N}(\bomega)&=\sum_{(a_1,\dots,a_r)\in A^{r}}\frac{1}{\widetilde{\pi}(a_1+u_1)w_1+\cdots+\widetilde{\pi}(a_{r-1}+u_{r-1})w_{r-1}+\widetilde{\pi}(a_r+u_r)}\\
            &= \sum_{a_1\in A} \sum_{\tilde{a}=(a_2,\dots,a_r)\in A^{r-1}} \frac{1}{\widetilde{\pi}(a_1+u_1)w_1+\widetilde{\pi}\tomega\cdot \tilde{\mathbf{u}}+\widetilde{\pi}\tomega\cdot \tilde{a}}\\
            &= \sum_{a_1\in A}\frac{1}{\exp_{A^{r-1}\widetilde{\pi}\tomega}(\widetilde{\pi}(a_1+u_1)w_1+\widetilde{\pi}\tomega\cdot \tilde{\mathbf{u}})}\\
            &=\sum_{a_1\in A}\frac{1}{\exp_{A^{r-1}\widetilde{\pi}\tomega}\left(\widetilde{\pi}\left(a_1+\frac{v_1}{N}\right) w_1\right)+ \exp_{A^{r-1}\widetilde{\pi}\tomega}(\widetilde{\pi}\tomega\cdot \tilde{\mathbf{u}})}\\
            &=\sum_{a_1\in A}\frac{1}{\phi^{\widetilde{\pi}\tomega}_{a_1N+v_1}(u_{N}^{-1})+ \exp_{A^{r-1}\widetilde{\pi}\tomega}(\widetilde{\pi}\tomega\cdot \tilde{\mathbf{u}})}\\
            &=\sum_{a_1\in A}\frac{1}{\phi^{\widetilde{\pi}\tomega}_{a_1N+v_1}(u_{N}^{-1})+E_{\widetilde{\bu},N}^{\mathrm{ari}}(\tomega)}\\
            &=\sum_{a_1\in A}\frac{1}{f_{a_1N+v_1}^{\widetilde{\pi}\tomega}(u_N)\widetilde{g}_{d(a_1N+v_1),a_1N+v_1}^{\mathrm{ari}}u_N^{-q^{d(a_1N+v_1)}}+E_{\widetilde{\bu},N}^{\mathrm{ari}}(\tomega)}\\
            &=\sum_{a_1\in A}\frac{1}{1+F_{a_1}(\boldsymbol{\omega})}\frac{u_N^{q^{d(a_1N+v_1)}}}{\widetilde{g}_{d(a_1N+v_1),a_1N+v_1}^{\mathrm{ari}}}\\
            &=\sum_{a_1\in A}\frac{u_N^{q^{d(a_1N+v_1)}}}{\widetilde{g}_{d(a_1N+v_1),a_1N+v_1}^{\mathrm{ari}}}\sum_{i\geq 0}(-\mathcal{P}_{a_1}(\boldsymbol{\omega}))^i,
        \end{align*}
        where
        \begin{align*}
                \mathcal{P}_{a_1}(\bomega)=(f_{a_1N+v_1}^{\widetilde{\pi}\tomega}(u_N)-1)+\frac{E_{\widetilde{\bu},N}^{\mathrm{ari}}(\tomega)}{\widetilde{g}_{d(a_1N+v_1),a}^{\mathrm{ari}}}u_N^{q^{d(a_1N+v_1)}}.
        \end{align*}
        Here, the third equality follows from \eqref{E:logder} and the last equality follows from the functional equation of $\exp_{A^{r-1}\widetilde{\pi}\tomega}$. One the one hand, using Lemma~\ref{Lem:Level_Change}, we have
        \[
            f_{a_1N+v_1}^{\widetilde{\pi}\tomega}(u_N)-1\in\mathcal{AW}^{r-1}\big(\widetilde{\Gamma}(N)\big)[u_N]u_N.
        \]
        On the other hand, by \eqref{Eq:inverse_of_leading_term}, Lemma~\ref{Lem:Level_Change}, and the induction hypothesis applied to $E_{\widetilde{\mathbf{u}},N}^{\mathrm{ari}}$, we have
        \[
            \frac{E_{\widetilde{\bu},N}^{\mathrm{ari}}(\tomega)}{\widetilde{g}_{d(a_1N+v_1),a}^{\mathrm{ari}}}u_N^{q^{d(a_1N+v_1)}}\in\mathcal{AW}^{r-1}\big(\widetilde{\Gamma}(N)\big)[u_N]u_N.
        \]
        It follows that
        \[
            \mathcal{P}_{a_1}(\bomega)\in\mathcal{AW}^{r-1}\big(\widetilde{\Gamma}(N)\big)[u_N]u_N.
        \]
        Since
        \[
            \mathrm{ord}_{u_N}\bigg(\frac{u_N^{q^{d(a_1N+v_1)}}}{\widetilde{g}_{d(a_1N+v_1),a_1N+v_1}^{\mathrm{ari}}}\sum_{i\geq 0}(-F_{a_1}(\boldsymbol{\omega}))^i\bigg)\geq q^{d(a_1N+v_1)},
        \]
        if we fix $\ell\geq 0$, then there are finitely many $a_1$ whose summand contribute to the coefficient of $u_N^\ell$. Therefore, we conclude that 
        \[
            \widetilde{\pi}^{-1}\mathscr{F}_{E_\mathbf{u},N}\in\mathcal{AW}^{r-1}\big(\widetilde{\Gamma}(N)\big)\llbracket X\rrbracket.
        \]
        The second assertion is a consequence of the first assertion.
    \end{proof}
    
\subsection{Special CM values of arithmetic Drinfeld modular forms}
    In what follows, we aim to show the connection between the values of the arithmetic Drinfeld modular forms at CM points and the periods of certain CM Drinfeld modules. We begin with the result concerning the special CM values of the ratio of two weight one Eisenstein series. As usual, let $N$ be a monic polynomial in $A$. Since  these ratios generate the function field of the $\mathbb{C}_\infty$-varieties $\Gamma(N)\backslash\Omega^r$ \cite[Prop.~2.6]{Gek19}, our result can be seen as a generalization of Hamahata's result on special CM values of J-invariants \cite[Prop.~6.2]{Ham03}. Recall that  $\boldsymbol{\omega}\in \Omega^r$ is a CM point if $$\dim_K\End(\Lambda_{\boldsymbol{\omega}})\otimes_A K=r.$$

    \begin{lemma}\label{Lem:CM_values_E}
        Let $\bu\in\mathcal{T}_N$ and $\boldsymbol{\omega}\in\Omega^r$ be a CM point. Then there exists a period $\lambda_{\boldsymbol{\omega}}$ of a CM Drinfeld module defined over $\oK$ such that
        \[
            E_{\bu,N}(\boldsymbol{\omega})\in\oK\lambda_{\boldsymbol{\omega}}.
        \]
        In particular, for $\bv_1,\bv_2\in\mathcal{T}_N$, we have
        \[
            \frac{E_{\bv_1,N}(\boldsymbol{\omega})}{E_{\bv_2,N}(\boldsymbol{\omega})}\in\oK.
        \]
    \end{lemma}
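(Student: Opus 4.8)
The plan is to express $E_{\bu,N}(\bomega)$ as a single value of the lattice exponential of $\Lambda_{\bomega}$, and then transport everything to an algebraic model of the CM Drinfeld module attached to $\bomega$ via a homothety.

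First I would unwind the definition of the Eisenstein series. Writing $\bu=(u_1,\dots,u_r)\in\mathcal{T}_N$ and setting $\xi:=u_1w_1+\cdots+u_{r-1}w_{r-1}+u_r$, the summation index $\ba=(a_1,\dots,a_r)\in A^r$ runs exactly over $\Lambda_{\bomega}=Aw_1+\cdots+Aw_{r-1}+A$ through $\ba\mapsto a_1w_1+\cdots+a_{r-1}w_{r-1}+a_r$, so that
\[
E_{\bu,N}(\bomega)=\sum_{\lambda\in\Lambda_{\bomega}}\frac{1}{\xi+\lambda}=\sum_{b\in\Lambda_{\bomega}}\frac{1}{\xi-b},
\]
where the last equality uses that $\Lambda_{\bomega}$ is an $\FF_q$-vector space and hence stable under negation. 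The logarithmic-derivative identity \eqref{E:logder} applied to $\Lambda=\Lambda_{\bomega}$ (i.e.\ the partial-fraction expansion of $1/e_{\Lambda_{\bomega}}$) then gives $E_{\bu,N}(\bomega)=\exp_{\phi^{\bomega}}(\xi)^{-1}$, where $\phi^{\bomega}$ is the Drinfeld module of $\Lambda_{\bomega}$ and $\exp_{\phi^{\bomega}}=e_{\Lambda_{\bomega}}$. Here $\xi\in N^{-1}\Lambda_{\bomega}$, and since $\bu\neq 0$ in $(N^{-1}A/A)^r$ while $w_1,\dots,w_{r-1},1$ are $A$-linearly independent, we have $\xi\notin\Lambda_{\bomega}$; thus $\exp_{\phi^{\bomega}}(\xi)\neq 0$ and the value is well defined.

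Next I would invoke the theory of complex multiplication. Since $\bomega$ is a CM point, $\phi^{\bomega}$ is a CM Drinfeld module, and by the algebraicity of singular moduli it is isomorphic over $\CC_\infty$ to a Drinfeld module $\psi^{\bomega}$ defined over $\oK$; concretely there is $c\in\CC_\infty^\times$ with $c\Lambda_{\bomega}=\Lambda_{\psi^{\bomega}}$ and $\exp_{\psi^{\bomega}}(cz)=c\exp_{\phi^{\bomega}}(z)$. Because $1\in\Lambda_{\bomega}$, the element $\lambda_{\bomega}:=c$ is itself a period of $\psi^{\bomega}$, and its period lattice $\Lambda_{\psi^{\bomega}}$ is by construction homothetic to $\Lambda_{\bomega}$. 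Substituting gives
\[
E_{\bu,N}(\bomega)=\exp_{\phi^{\bomega}}(\xi)^{-1}=c\,\exp_{\psi^{\bomega}}(c\xi)^{-1}=\lambda_{\bomega}\,\exp_{\psi^{\bomega}}(\lambda_{\bomega}\xi)^{-1}.
\]
It then remains to show $\exp_{\psi^{\bomega}}(\lambda_{\bomega}\xi)\in\oK^\times$. Writing $N=\mathfrak{n}(\theta)$ for the corresponding $\mathfrak{n}\in\FF_q[t]$, the functional equation \eqref{E:exp} yields $\psi^{\bomega}_{\mathfrak{n}}\big(\exp_{\psi^{\bomega}}(\lambda_{\bomega}\xi)\big)=\exp_{\psi^{\bomega}}(\lambda_{\bomega}N\xi)=0$, since $N\xi\in\Lambda_{\bomega}$ forces $\lambda_{\bomega}N\xi\in\Lambda_{\psi^{\bomega}}$. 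Hence $\exp_{\psi^{\bomega}}(\lambda_{\bomega}\xi)$ is an $\mathfrak{n}$-torsion point of $\psi^{\bomega}$; as $\psi^{\bomega}$ is defined over $\oK$ the separable polynomial $\psi^{\bomega}_{\mathfrak{n}}(X)\in\oK[X]$ has all its roots in $\oK$, and this root is nonzero by the first paragraph, so it lies in $\oK^\times$. Therefore $E_{\bu,N}(\bomega)\in\oK^\times\lambda_{\bomega}\subseteq\oK\lambda_{\bomega}$, proving the first assertion. For the final claim, $\lambda_{\bomega}$ depends only on $\bomega$, so both $E_{\bv_1,N}(\bomega)$ and $E_{\bv_2,N}(\bomega)$ lie in $\oK^\times\lambda_{\bomega}$ (using that weight-one Eisenstein series are nowhere vanishing, Example~\ref{Ex:1}(ii)), whence the factors $\lambda_{\bomega}$ cancel and $E_{\bv_1,N}(\bomega)/E_{\bv_2,N}(\bomega)\in\oK$.

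I would flag the complex-multiplication input as the crux: the statement that a CM Drinfeld module over $\CC_\infty$ admits a model over $\oK$ (algebraicity of singular moduli) is precisely what makes the torsion value $\exp_{\psi^{\bomega}}(\lambda_{\bomega}\xi)$ algebraic, and it is the only non-formal ingredient in the argument. Everything else — the collapse of the Eisenstein sum into $\exp_{\phi^{\bomega}}(\xi)^{-1}$ via the partial-fraction expansion, and the behaviour of $\exp$ under the homothety $c$ — is a routine bookkeeping computation.
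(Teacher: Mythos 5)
Your proposal is correct and follows essentially the same route as the paper: both collapse the Eisenstein sum to $\exp_{\Lambda_{\bomega}}(\bu\cdot\bomega)^{-1}$, use CM theory (algebraicity of the CM Drinfeld module, which the paper sources to Hayes' explicit class field theory and Wei) to replace $\Lambda_{\bomega}$ by the homothetic lattice $\lambda_{\bomega}\Lambda_{\bomega}$ of a model $\psi^{\bomega}$ over $\oK$, and conclude that the resulting value is $\lambda_{\bomega}$ times the reciprocal of a nonzero $N$-torsion point of $\psi^{\bomega}$, hence lies in $\oK^\times\lambda_{\bomega}$. You merely make explicit two steps the paper delegates to citations (the partial-fraction identity and the algebraicity of torsion values), so no substantive difference.
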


    \begin{proof} 
        Since $\bomega$ is a CM point, $\phi^{\bomega}$ is a CM Drinfeld module. It follows from class field theory over global function fields \cite[Thm.~8.5]{Hay79} and \cite[Thm.~A.1]{Wei20} that there exists a Drinfeld module $\psi^{\boldsymbol{\omega}}$ defined over $H_{\bomega}\subset \overline{K}$, the ring class field of $\End(\Lambda_{\bomega})$, such that $\phi^{\boldsymbol{\omega}}\cong\psi^{\boldsymbol{\omega}}$. If we denote by $\Lambda_\psi$ the period lattice of $\psi^{\boldsymbol{\omega}}$, then there exists  $\lambda_{\boldsymbol{\omega}}\in\mathbb{C}_\infty^\times$ so that $\Lambda_\psi=\lambda_{\boldsymbol{\omega}}\Lambda_{\boldsymbol{\omega}}$. Moreover, since $1\in\Lambda_{\boldsymbol{\omega}}$, we must have $\lambda_{\boldsymbol{\omega}}\in\Lambda_\psi$. Thus,
        \[
            E_{\bu,N}(\boldsymbol{\omega})=\exp_{\Lambda_{\boldsymbol{\omega}}}(\bu\cdot\boldsymbol{\omega})^{-1}=\exp_{\lambda_{\boldsymbol{\omega}}^{-1}\lambda_{\boldsymbol{\omega}}\Lambda_{\boldsymbol{\omega}}}(\bu\cdot\boldsymbol{\omega})^{-1}=\lambda_{\boldsymbol{\omega}}\exp_{\Lambda_\psi}\big(\lambda_{\boldsymbol{\omega}}(\bu\cdot\boldsymbol{\omega})\big)^{-1}\in\oK\lambda_{\boldsymbol{\omega}}.
        \]
        Here, the first equality follows from \cite[Cor. 13.7]{BBP21} and the third equality follows from \cite[\S4]{Gek89}. Since $\lambda_{\boldsymbol{\omega}}$ is independent of the choice of elements in $\mathcal{T}_N$, the second assertion follows immediately.
    \end{proof}

    The following proposition is essential to prove Theorem~\ref{Thm:Special_Values_to_Periods}.

    \begin{proposition}\label{P:coefficients} 
        Let $f\in \mathcal{A}\mathcal{M}_k^{r}(\Gamma(N))$. Then for each $0\leq i \leq n-1$, there exists $\mathfrak{q}_i(\underline{X_{\mathbf{u}}})\in \overline{K}[X_{\mathbf{u}} \mid \mathbf{u}\in \mathcal{T}_{\theta}]$ so that 
        \[
            \mathfrak{q}_0(\underline{E^{\mathrm{ari}}_{\mathbf{u}}})+\mathfrak{q}_1(\underline{E^{\mathrm{ari}}_{\mathbf{u}}})f+\dots+\mathfrak{q}_{n-1}(\underline{E^{\text{ari}}_{\mathbf{u}}})f^{n-1}+f^n=0.
        \] 
        In particular, $f$ is integral over $\oK[E_\bu^{\mathrm{ari}}\mid\bu\in\mathcal{T}_\theta]$.
    \end{proposition}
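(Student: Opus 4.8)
The plan is to bootstrap from the integrality already supplied by Lemma~\ref{L:integral} and then to descend the coefficients from $\mathbb{C}_\infty$ down to $\overline{K}$ by playing the arithmeticity of every form involved against the injectivity of the $u$-expansion. We may assume $k\geq 0$, so that Lemma~\ref{L:integral} applies to $f\in\mathcal{M}^r_k(\Gamma(N))$ and produces a monic relation
\begin{equation*}
  f^n+c_{n-1}f^{n-1}+\cdots+c_0=0,\qquad c_i\in\mathbb{C}_\infty[E_{\mathbf{u}}\mid\mathbf{u}\in\mathcal{T}_\theta].
\end{equation*}
Writing each $c_i=\sum_M a^{(i)}_M M$ as a $\mathbb{C}_\infty$-linear combination of monomials $M=\prod_{\mathbf{u}}E_{\mathbf{u}}^{e_{\mathbf{u}}}$ and substituting $E_{\mathbf{u}}=\widetilde{\pi}\,E^{\mathrm{ari}}_{\mathbf{u}}$ (recall $E^{\mathrm{ari}}_{\mathbf{u}}=\widetilde{\pi}^{-1}E_{\mathbf{u}}$), each monomial becomes $M=\widetilde{\pi}^{\deg M}M^{\mathrm{ari}}$ with $M^{\mathrm{ari}}:=\prod_{\mathbf{u}}(E^{\mathrm{ari}}_{\mathbf{u}})^{e_{\mathbf{u}}}$. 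Hence the relation can be recast as
\begin{equation*}
  f^n=-\sum_{i=0}^{n-1}\sum_M\gamma_{i,M}\,\big(M^{\mathrm{ari}}f^i\big),\qquad \gamma_{i,M}:=a^{(i)}_M\widetilde{\pi}^{\deg M}\in\mathbb{C}_\infty,
\end{equation*}
expressing $f^n$ as an explicit $\mathbb{C}_\infty$-linear combination of the forms $M^{\mathrm{ari}}f^i$. No weight-homogeneity bookkeeping is needed: the powers of $\widetilde{\pi}$ are simply absorbed into the coefficients $\gamma_{i,M}$, which is harmless because the next step uses only the \emph{existence} of such a $\mathbb{C}_\infty$-linear relation.

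The crux is to replace the transcendental coefficients $\gamma_{i,M}$ by algebraic ones, and for this I would first put all the forms on a common level. Choosing a monic $N'$ with $(N')\subseteq(N)\cap(\theta)$, Lemma~\ref{Lem:Level_Change} guarantees that $f$ and each $E^{\mathrm{ari}}_{\mathbf{u}}$ (the latter arithmetic by Lemma~\ref{Lem:Expansion_of_Eisenstein_Series}) lie in $\mathcal{AW}^r(\Gamma(N'))$. Since $\mathcal{AW}^r(\Gamma(N'))$ is a $\overline{K}$-algebra, the products $f^n$ and $M^{\mathrm{ari}}f^i$ are again arithmetic, so by definition their $u_{N'}$-expansions have all their coefficients in $\mathcal{AW}^{r-1}(\widetilde{\Gamma})$, which is a $\overline{K}$-vector space. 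Applying the $u_{N'}$-expansion to the displayed relation exhibits $\mathscr{F}_{f^n}$ as a $\mathbb{C}_\infty$-linear combination of the $\mathscr{F}_{M^{\mathrm{ari}}f^i}$, where all of these power series have their coefficients lying in the single $\overline{K}$-vector space $V:=\prod_{j\geq 0}\mathcal{AW}^{r-1}(\widetilde{\Gamma})$.

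The descent is then purely linear-algebraic. For finitely many vectors in a $\overline{K}$-vector space $V$, membership of one in the $\mathbb{C}_\infty$-linear span of the others inside $V\otimes_{\overline{K}}\mathbb{C}_\infty$ already forces membership in their $\overline{K}$-linear span inside $V$, since for the $\overline{K}$-subspace $U$ they generate the natural map $V/U\to(V/U)\otimes_{\overline{K}}\mathbb{C}_\infty$ is injective. Consequently there exist $\delta_{i,M}\in\overline{K}$ with
\begin{equation*}
  \mathscr{F}_{f^n}=\sum_{i=0}^{n-1}\sum_M\delta_{i,M}\,\mathscr{F}_{M^{\mathrm{ari}}f^i}.
\end{equation*}
As $f^n-\sum_{i,M}\delta_{i,M}M^{\mathrm{ari}}f^i$ is $\Gamma_U$-invariant with vanishing $u_{N'}$-expansion, the injectivity in \eqref{E:injection} yields $f^n=\sum_{i,M}\delta_{i,M}M^{\mathrm{ari}}f^i$. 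Collecting terms by the power of $f$ and setting $\mathfrak{q}_i(\underline{X_{\mathbf{u}}}):=-\sum_M\delta_{i,M}\prod_{\mathbf{u}}X_{\mathbf{u}}^{e_{\mathbf{u}}}\in\overline{K}[X_{\mathbf{u}}\mid\mathbf{u}\in\mathcal{T}_\theta]$ gives precisely $\mathfrak{q}_0(\underline{E^{\mathrm{ari}}_{\mathbf{u}}})+\cdots+\mathfrak{q}_{n-1}(\underline{E^{\mathrm{ari}}_{\mathbf{u}}})f^{n-1}+f^n=0$, hence the integrality of $f$ over $\overline{K}[E^{\mathrm{ari}}_{\mathbf{u}}\mid\mathbf{u}\in\mathcal{T}_\theta]$.

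The hard part, and the only place where input beyond Lemma~\ref{L:integral} enters, is exactly this coefficient descent: it is what converts the a priori transcendental integral dependence (with $\mathbb{C}_\infty$-coefficients contaminated by powers of $\widetilde{\pi}$) into an algebraic one. Its whole force rests on knowing that the building blocks $f$ and $E^{\mathrm{ari}}_{\mathbf{u}}$, and all their products, stay arithmetic on one common level and therefore carry $\overline{K}$-rational $u$-expansion coefficients, which is precisely what Lemmas~\ref{Lem:Expansion_of_Eisenstein_Series} and~\ref{Lem:Level_Change} deliver in tandem with the injectivity of the $u$-expansion.
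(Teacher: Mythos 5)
Your overall strategy is the same as the paper's: start from the $\mathbb{C}_\infty$-integral relation supplied by Lemma~\ref{L:integral}, rewrite it in terms of the $E_{\mathbf{u}}^{\mathrm{ari}}$, and then use arithmeticity together with the $u$-expansion to replace the $\mathbb{C}_\infty$-coefficients by algebraic ones. The difference lies in how the descent to $\oK$ is carried out, and that is exactly where your argument has a genuine gap.

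You assert that the descent is ``purely linear-algebraic'': since $\mathscr{F}_{f^n}$ and the $\mathscr{F}_{M^{\mathrm{ari}}f^i}$ all lie in the $\oK$-vector space $V=\prod_{j\geq 0}\mathcal{AW}^{r-1}(\widetilde{\Gamma})$, a $\mathbb{C}_\infty$-linear relation among them ``inside $V\otimes_{\oK}\mathbb{C}_\infty$'' forces a $\oK$-linear one. The flaw is that the relation you actually possess holds in the ambient $\mathbb{C}_\infty$-vector space $\prod_{j\geq 0}\mathcal{O}(\Omega^{r-1})$, not a priori in $V\otimes_{\oK}\mathbb{C}_\infty$; to transport it there you need the natural map $V\otimes_{\oK}\mathbb{C}_\infty\to\prod_{j\geq 0}\mathcal{O}(\Omega^{r-1})$ to be injective, i.e.\ you need to know that $\oK$-linearly independent elements of $\mathcal{AW}^{r-1}(\widetilde{\Gamma})$ remain $\mathbb{C}_\infty$-linearly independent as rigid analytic functions. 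This is not a formal consequence of $V$ being a $\oK$-subspace: for $V=\oK+\oK\widetilde{\pi}\subset\mathbb{C}_\infty$ the element $\widetilde{\pi}$ lies in the $\mathbb{C}_\infty$-span of $1$ but not in its $\oK$-span, and $V\otimes_{\oK}\mathbb{C}_\infty\to\mathbb{C}_\infty$ is not injective. The needed injectivity does hold for $\mathcal{AW}^{r-1}(\widetilde{\Gamma})$, but it is an arithmetic statement that must itself be proved by induction on the rank, expanding coefficients repeatedly through \eqref{E:injection} until one lands in $\mathcal{AW}^1=\oK\subset\mathbb{C}_\infty$, where it finally becomes a statement about a field extension. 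This is precisely the step the paper does not skip: it turns \eqref{E:SLE} into a linear system $\mathcal{S}$ with coefficients in $\oK$ (via the induction of \cite[Lem.~2.6.6]{CG25}) and then invokes the elementary fact that a linear system defined over $\oK$ which is solvable over $\mathbb{C}_\infty$ is solvable over $\oK$. Your write-up hides this entire content inside the phrase ``inside $V\otimes_{\oK}\mathbb{C}_\infty$''; once you supply the injectivity lemma (equivalently, carry out the reduction to a $\oK$-linear system), the argument closes and coincides in substance with the paper's. A minor further remark: discarding the weight-homogeneity bookkeeping is harmless for the proposition as stated, but the paper keeps each $\mathfrak{q}_i$ homogeneous of degree $k(n-i)$ because that homogeneity is what is actually used in the proof of Theorem~\ref{Thm:Special_Values_to_Periods}.
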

    
    \begin{proof}
    Let $f\in \mathcal{A}\mathcal{M}_k^{r}(\Gamma(N))$. By Lemma \ref{L:integral}, we know that $f$ is integral over the $\mathbb{C}_\infty$-algebra $\mathbb{C}_{\infty}[E_{\mathbf{u}}^{\mathrm{ari}} \mid \mathbf{u}\in \mathcal{T}_{\theta}]$. Hence, there is a positive integer $n$ and, for each $0\leq i \leq n-1$, a polynomial $\mathfrak{p}_i(\underline{X_{\mathbf{u}}})\in \mathbb{C}_{\infty}[X_{\mathbf{u}} \mid \mathbf{u}\in \mathcal{T}_{\theta}]$ such that 
    \begin{equation}\label{E:integraleq}
        \mathfrak{p}_0(\underline{E^{\mathrm{ari}}_{\mathbf{u}}})+\mathfrak{p}_1(\underline{E^{\mathrm{ari}}_{\mathbf{u}}})f+\dots+\mathfrak{p}_{n-1}(\underline{E^{\text{ari}}_{\mathbf{u}}})f^{n-1}+f^n=0.
    \end{equation}
    Here, we mean by $\mathfrak{p}_i(\underline{E^{\mathrm{ari}}_{\mathbf{u}}})$ the evaluation of the polynomial $\mathfrak{p}_i(\underline{X_{\mathbf{u}}})$ at the Eisenstein series $E^{\mathrm{ari}}_{\mathbf{u}}$ for each $\mathbf{u}\in \mathcal{T}_{\theta}$. More precisely, if we set 
    \[
        \mathfrak{p}_i(\underline{X_{\mathbf{u}}}):=\sum_{\substack{\epsilon=(\epsilon_{\mathbf{u}})_{\bu\in \mathcal{T}_{\theta}}}}c_{\epsilon}\prod_{\mathbf{u}\in \mathcal{T}_{\theta}}X_{\mathbf{u}}^{\epsilon_{\mathbf{u}}},~c_{\epsilon}\in\oK
    \]
    then we have 
    \[
        \mathfrak{p}_i(\underline{E^{\mathrm{ari}}_{\mathbf{u}}}):=\sum_{\substack{\epsilon=(\epsilon_{\mathbf{u}})_{\bu\in \mathcal{T}_{\theta}}}}c_{\epsilon}\prod_{\mathbf{u}\in \mathcal{T}_{\theta}}(E^{\mathrm{ari}}_{\mathbf{u}})^{\epsilon_{\mathbf{u}}},~c_{\epsilon}\in\oK.
    \]
    Now assume without loss of generality, that is, by passing to a higher level if necessary, that $\theta|N$ and hence by Lemma \ref{Lem:Level_Change},  $E_{\bu}^{\mathrm{ari}}\in \mathcal{M}^r({\Gamma(N)})$. Since $\mathcal{M}^r({\Gamma(N)})$ is a graded ring by weights, by taking the pure weight $kn$ parts of \eqref{E:integraleq}, we have $\mathfrak{p}_i(\underline{E^{\mathrm{ari}}_{\mathbf{u}}})\in \mathcal{M}^r_{k(n-i)}(\Gamma(N))$ and $\mathfrak{p}_i(\underline{X_{\mathbf{u}}})$ is a homogeneous polynomial in $X_{\mathbf{u}}$ of degree $k(n-i)$. 
    
    For $0\leq i\leq n-1$, we consider
    \[
        \mathfrak{P}_i(\underline{X_{\mathbf{u}}},\underline{Y_{\epsilon}}):=\sum_{\substack{\epsilon=(\epsilon_{\mathbf{u}})_{\mathbf{u}\in \mathcal{T}_{\theta}}}, \wt(\epsilon)=ik}Y_{\epsilon}\prod_{\mathbf{u}\in \mathcal{T}_{\theta}}X_{\mathbf{u}}^{\epsilon_{\mathbf{u}}}\in \mathbb{F}_q[X_{\mathbf{u}}, Y_{\epsilon} \mid  \mathbf{u}\in\mathcal{T}_{\theta},~\epsilon\in\mathbb{Z}_{\geq 0}^{|\mathcal{T}_\theta|},~\wt(\epsilon)=k(n-i)],
    \]
    where for each $\epsilon=(\epsilon_{\bu})_{\bu\in\mathcal{T}_\theta}$, we set $\wt(\epsilon):=\sum_{\bu\in\mathcal{T}_\theta}\epsilon_\bu$.
    Note that
    \[
        \mathfrak{P}_i(\underline{X_{\mathbf{u}}},\underline{Y_{\epsilon}})|_{X_{\mathbf{u}}=E^{\mathrm{ari}}_{\mathbf{u}}}=\mathfrak{P}_i(\underline{E_{\mathbf{u}}^{\mathrm{ari}}},\underline{Y_{\epsilon}})\in (\mathbb{F}_q[E_\bu^{\mathrm{ari}}\mid\bu\in\mathcal{T}_\theta])[Y_{\epsilon}\mid\epsilon\in\mathbb{Z}_{\geq 0}^{|\mathcal{T}_\theta|},~\wt(\epsilon)=k(n-i)].
    \]
    Consider
    \begin{equation}\label{E:SLE}
        \mathfrak{P}_0(\underline{E^{\mathrm{ari}}_{\mathbf{u}}},\underline{Y_{\epsilon}})+\mathfrak{P}_1(\underline{E^{\mathrm{ari}}_{\mathbf{u}}},\underline{Y_{\epsilon}})f+\dots+\mathfrak{P}_{n-1}(\underline{E^{\mathrm{ari}}_{\mathbf{u}}},\underline{Y_{\epsilon}})f^{n-1}+f^n=0.
    \end{equation}

    In what follows, we will use the injection described in \eqref{E:injection} to construct a system of linear equations over $\overline{K}$ in the variables $\underline{Y_{\epsilon}}$. Note that, by \eqref{E:injection}, since $f$ and $E_{\mathbf{u}}^{\mathrm{ari}}$ are elements in $\mathcal{AM}^r(\Gamma(N))$, for each $\ell\geq 0$ and $\mathbf{u}\in \mathcal{T}_{\theta}$, there exist uniquely defined rigid analytic functions $f_{\ell},\widetilde{F}_{\mathrm{u},\ell}:\Omega^{r-1}\to \mathbb{C}_{\infty}$ such that 
\begin{equation}\label{E:expan1}
\mathcal{F}_{f}=\sum_{\ell\geq 0}f_{\ell}X^{\ell}\in \mathcal{O}(\Omega^{r-1})\llbracket X\rrbracket
\end{equation}
and
\begin{equation}\label{E:expan2}
\mathcal{F}_{E^{\mathrm{ari}}_{\mathbf{u}}}=\sum_{\ell\geq 0}\widetilde{F}_{\mathrm{u},\ell}X^{\ell}\in \mathcal{O}(\Omega^{r-1})\llbracket X\rrbracket.
\end{equation}

For each $0\leq \mu \leq n-1$, 
using \eqref{E:expan1} and \eqref{E:expan2}, we can see that each coefficient of the monomials in $\mathfrak{P}_{\mu}(\underline{E^{\mathrm{ari}}_{\mathbf{u}}},Y_{\underline{\epsilon}})f^{\mu}\in \mathcal{O}(\Omega^{r-1})[Y_{\epsilon} \mid  \epsilon\in\mathbb{Z}_{\geq 0}^{|\mathcal{T}_\theta|}\mid\wt(\epsilon)=k(n-\mu)]$ leads to an infinite series expansion in $X$. Hence, by a slight abuse of notation, we can write  
\[
\mathcal{F}_{\mathfrak{P}_{\mu}(\underline{E^{\mathrm{ari}}_{\mathbf{u}}},Y_{\underline{\epsilon}})f^{\mu}}=\sum_{\ell\geq 0}F_{r,\mu,\ell}X^{\ell}\in \Span_{\mathcal{O}(\Omega^{r-1})}(1,Y_{\underline{\epsilon}})\llbracket X\rrbracket
\]
for some rigid analytic functions $F_{r,\mu,\ell}:\Omega^{r-1}\to \mathbb{C}_{\infty}$. Now comparing the $X^{\ell}$ coefficients of both sides of \eqref{E:SLE}, one obtains a linear equation in the variables $Y_{\epsilon}$ given by
\begin{equation}\label{E:LE1}
F_{r,0,\ell}+\cdots+F_{r,n,\ell}=0.
\end{equation}
Thus, since $f$ is an arithmetic Drinfeld modular form, using the induction argument in the proof of \cite[Lem. 2.6.6]{CG25}, for each linear equation in \eqref{E:LE1}, one can obtain a linear equation, hence a linear system $\mathcal{S}$, over $\overline{K}$ in $Y_{\underline{\epsilon}}$. Since, the set
\[
    \cup_{i=0}^{n-1}\{c_{\epsilon} \mid \epsilon\in \mathbb{Z}_{\geq 0}^{|\mathcal{T}_\theta|},~\wt(\epsilon)=k(n-i) \}
\]
is a solution for $\mathcal{S}$, we know that $\mathcal{S}$ is solvable system of linear equations over $\oK$ and it has at most 
\[
    |\cup_{i=0}^{n-1}\{c_{\epsilon} \mid \epsilon\in \mathbb{Z}_{\geq 0}^{|\mathcal{T}_\theta|},~\wt(\epsilon)=k(n-i) \}|<\infty
\]
many linearly independent equations. Since $\mathcal{S}$ is defined over $\overline{K}$, there exists a set of solutions
\[
    \cup_{i=0}^{n-1}\{d_{\epsilon} \mid \epsilon\in \mathbb{Z}_{\geq 0}^{|\mathcal{T}_\theta|},~\wt(\epsilon)=k(n-i) \}\subset\oK.
\]
Finally, we define
\[
    \mathfrak{q}_i(\underline{X_{\mathbf{u}}}):=\mathfrak{P}_i(\underline{X_{\mathbf{u}}},\underline{d_{\epsilon}})\in\oK[X_\bu\mid\bu\in\mathcal{T}_\theta].
\]
Then it follows from our construction and \eqref{E:SLE} that
\begin{align*}
    0&=\big(\mathfrak{P}_0(\underline{E^{\mathrm{ari}}_{\mathbf{u}}},\underline{Y_{\epsilon}})+\mathfrak{P}_1(\underline{E^{\mathrm{ari}}_{\mathbf{u}}},\underline{Y_{\epsilon}})f+\dots+\mathfrak{P}_{n-1}(\underline{E^{\mathrm{ari}}_{\mathbf{u}}},\underline{Y_{\epsilon}})f^{n-1}+f^n\big)\mid_{Y_\epsilon=d_\epsilon}\\
    &=\mathfrak{q}_0(\underline{E^{\mathrm{ari}}_{\mathbf{u}}})+\mathfrak{q}_1(\underline{E^{\mathrm{ari}}_{\mathbf{u}}})f+\dots+\mathfrak{q}_{n-1}(\underline{E^{\text{ari}}_{\mathbf{u}}})f^{n-1}+f^n
\end{align*}
which finishes the proof of the proposition. 
\end{proof}

    Let $\alpha,\beta\in\mathbb{C}_\infty$. We denote by $\alpha\sim\beta$ if the ratio $\alpha/\beta\in\oK$. In order to state the main theorem of this section, we need the following definition.


    Now we are ready to present the primary goal of this section.
    
    \begin{theorem}\label{Thm:Special_Values_to_Periods}
        Let $F$ be a meromorphic arithmetic Drinfeld modular form of non-zero weight $k$ for a congruence subgroup $\Gamma$ of $\GL_r(A)$. For any CM point $\boldsymbol{\omega}\in\Omega^r$, if $F$ is defined at $\boldsymbol{\omega}$, then we have
        \[
            F(\boldsymbol{\omega})\sim\left(\lambda_{\boldsymbol{\omega}}/\widetilde{\pi}\right)^k
        \]
        where $\lambda_{\boldsymbol{\omega}}$ is a period of a CM Drinfeld module  defined over $\oK$.
    \end{theorem}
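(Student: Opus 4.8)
The plan is to reduce the statement to the integrality theorem of Proposition~\ref{P:coefficients}, combined with the special-value computation of Lemma~\ref{Lem:CM_values_E}, and then to exploit the grading by weight. Write $F=f/g$ with $f\in\mathcal{AM}^r_{k+\ell}(\Gamma)$ and $g\in\mathcal{AM}^r_{\ell}(\Gamma)$ for some $\ell\geq 1$, so that $k=(k+\ell)-\ell$. Choose a monic $N$ with $\theta\mid N$ and $\Gamma\supseteq\Gamma(N)$; invoking the level-compatibility of arithmeticity (Lemma~\ref{Lem:Level_Change} and its analogue for a general congruence subgroup sitting above $\Gamma(N)$) one first checks $f,g\in\mathcal{AM}^r(\Gamma(N))$, so that Proposition~\ref{P:coefficients} applies to each of them. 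Set $\Pi:=\lambda_{\bomega}/\tpi$, where $\lambda_{\bomega}$ is the period attached to the CM point $\bomega$ by Lemma~\ref{Lem:CM_values_E}; recall that $\lambda_{\bomega}$ is the period of a CM Drinfeld module defined over $\oK$ whose lattice is homothetic to $\Lambda_{\bomega}$, and crucially that it is independent of the chosen $\bu\in\mathcal{T}_\theta$. Since $E_\bu^{\mathrm{ari}}=\tpi^{-1}E_{\bu,\theta}$ and $E_{\bu,\theta}$ is nowhere vanishing (Example~\ref{Ex:1}(ii)), Lemma~\ref{Lem:CM_values_E} gives $E_\bu^{\mathrm{ari}}(\bomega)=c_\bu\,\Pi$ with $c_\bu\in\oK^\times$ for every $\bu\in\mathcal{T}_\theta$. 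The core of the argument is the intermediate claim that any $h\in\mathcal{AM}^r_\kappa(\Gamma(N))$ satisfies $h(\bomega)\sim\Pi^{\kappa}$, after which we simply pass to the quotient.

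For the intermediate claim I would use the \emph{weight-homogeneous} form of the integral equation produced in Proposition~\ref{P:coefficients}: since $\mathcal{M}^r(\Gamma(N))$ is graded by weight and each $E_\bu^{\mathrm{ari}}$ has weight one, the coefficients $\mathfrak{q}_i(\underline{X_\bu})\in\oK[X_\bu\mid\bu\in\mathcal{T}_\theta]$ in
\[
\mathfrak{q}_0(\underline{E^{\mathrm{ari}}_{\bu}})+\cdots+\mathfrak{q}_{n-1}(\underline{E^{\mathrm{ari}}_{\bu}})h^{n-1}+h^n=0
\]
may be taken homogeneous of degree $\kappa(n-i)$. Evaluating at $\bomega$ and substituting $E_\bu^{\mathrm{ari}}(\bomega)=c_\bu\Pi$, homogeneity yields $\mathfrak{q}_i$ evaluated at $(c_\bu\Pi)_\bu$ equal to $a_i\,\Pi^{\kappa(n-i)}$, where $a_i:=\mathfrak{q}_i(\underline{c_\bu})\in\oK$. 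Dividing the resulting identity by $\Pi^{\kappa n}$ gives
\[
a_0+a_1\big(h(\bomega)/\Pi^{\kappa}\big)+\cdots+a_{n-1}\big(h(\bomega)/\Pi^{\kappa}\big)^{n-1}+\big(h(\bomega)/\Pi^{\kappa}\big)^{n}=0,
\]
so $h(\bomega)/\Pi^{\kappa}$ is integral over $\oK$. Since $\oK$ is algebraically closed in $\mathbb{C}_\infty$, this forces $h(\bomega)/\Pi^{\kappa}\in\oK$, i.e.\ $h(\bomega)\sim\Pi^{\kappa}$.

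Applying the claim to $h=f$ (with $\kappa=k+\ell$) and $h=g$ (with $\kappa=\ell$) gives $f(\bomega)/\Pi^{k+\ell}\in\oK$ and $g(\bomega)/\Pi^{\ell}\in\oK$. Because $F$ is defined at $\bomega$ we have $g(\bomega)\neq 0$, hence $g(\bomega)/\Pi^{\ell}\in\oK^\times$, and therefore
\[
\frac{F(\bomega)}{\Pi^{k}}=\frac{f(\bomega)/\Pi^{k+\ell}}{g(\bomega)/\Pi^{\ell}}\in\oK,
\]
which is exactly $F(\bomega)\sim(\lambda_{\bomega}/\tpi)^{k}$ (the degenerate case $f(\bomega)=0$ giving $F(\bomega)=0\sim\Pi^k$ as well). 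I expect the main obstacle to be the bookkeeping that makes the reduction to level $\Gamma(N)$ legitimate, namely that a form arithmetic for $\Gamma$ remains arithmetic for $\Gamma(N)$ so that Proposition~\ref{P:coefficients} is genuinely available, together with the verification that the integral relation can be chosen weight-homogeneous. A secondary point worth recording explicitly is that the \emph{same} period $\lambda_{\bomega}$, hence the same $\Pi$, serves all the Eisenstein series simultaneously, which is precisely the independence of $\lambda_{\bomega}$ from $\bu$ in Lemma~\ref{Lem:CM_values_E}; once these points are secured, the factorization of powers of $\Pi$ and the algebraicity conclusion are routine.
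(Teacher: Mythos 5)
Your proposal is correct and follows essentially the same route as the paper: reduce to level $\Gamma(N)$ with $\theta\mid N$, invoke the weight-homogeneous integral relation of Proposition~\ref{P:coefficients} over $\oK[E_{\bu}^{\mathrm{ari}}\mid\bu\in\mathcal{T}_\theta]$, use Lemma~\ref{Lem:CM_values_E} to convert each Eisenstein value into an algebraic multiple of $\lambda_{\bomega}/\widetilde{\pi}$, deduce that $h(\bomega)/(\lambda_{\bomega}/\widetilde{\pi})^{\kappa}$ is integral over $\oK$ and hence algebraic, and finally pass to the ratio $F=f/g$. The only cosmetic difference is that the paper normalizes by dividing the relation by $(E_{\bu}^{\mathrm{ari}}(\bomega))^{nk}$ and uses the algebraicity of ratios of Eisenstein values, whereas you substitute $E_{\bu}^{\mathrm{ari}}(\bomega)=c_{\bu}\Pi$ and divide by $\Pi^{\kappa n}$ --- these are equivalent manipulations.
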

    
    \begin{proof} We first choose a monic polynomial $N\in A$ such that it is divisible by $\theta$ and $\Gamma(N)\subset \Gamma$. Let $F\in \mathcal{M}_k^r(\Gamma(N))$. By Lemma \ref{L:integral} and Proposition \ref{P:coefficients}, we know that there exist a positive integer $n$ and, for each $0\leq i \leq n-1$, a homogeneous polynomial $\mathfrak{q}_i(\underline{X_{\mathbf{u}}})\in \overline{K}[X_{\mathbf{u}} \ \ | \mathbf{u}\in \mathcal{T}_{\theta}]$ of degree $k(n-i)$ such that 
    \begin{equation}\label{E:integ2}
        \mathfrak{q}_0(\underline{E^{\mathrm{ari}}_{\mathbf{u}}})+\mathfrak{q}_1(\underline{E^{\mathrm{ari}}_{\mathbf{u}}})F+\dots+\mathfrak{q}_{n-1}(\underline{E^{\text{ari}}_{\mathbf{u}}})F^{n-1}+F^n=0.
    \end{equation}
    Since each $\mathfrak{q}_i(X_{\mathbf{u}})$ is homogeneous of degree $k(n-i)$, we have 
    \[
        \frac{\mathfrak{q}_i(\underline{E^{\mathrm{ari}}_{\mathbf{u}}})}{(E_\mathbf{u}^{\mathrm{ari}})^{k(n-i)}}\in \overline{K}\left[\frac{E_{\mathbf{u}_1}}{E_{\mathbf{u}_2}} \ \ | \ \ \mathbf{u}_1,\mathbf{u}_2\in \mathcal{T}_{\theta}\right]
    \] 
    and hence by Lemma \ref{Lem:CM_values_E}, we obtain 
    \begin{equation}\label{E:integ3}
        \frac{\mathfrak{q}_i(\underline{E^{\mathrm{ari}}_{\mathbf{u}}(\bomega))}}{(E_\mathbf{u}^{\mathrm{ari}}(\bomega))^{k(n-i)}}\in \overline{K}.
    \end{equation}

    We now evaluate the left hand side of \eqref{E:integ2} at $\bomega$. Then dividing it by $(E_{\mathbf{u}}^{\mathrm{ari}}(\bomega))^{nk}$ and using \eqref{E:integ3}, we see that 
    \[
        F(\bomega)/E_{\mathbf{u}}^{\mathrm{ari}}(\bomega)^k\in \overline{K}.
    \]
    In other words, $F(\boldsymbol{\omega})\sim E_{\bu}^{\mathrm{ari}}(\boldsymbol{\omega})^k$ and hence using Lemma~\ref{Lem:CM_values_E} once again, we obtain
    \begin{equation}\label{E:algebraicity}
        F(\boldsymbol{\omega})\sim\left(\lambda_{\boldsymbol{\omega}}/\widetilde{\pi}\right)^k,
    \end{equation}
    as desired. On the other hand, if we write $F=\frac{F_1}{F_2}$ for some elements $F_1\in \mathcal{A}\mathcal{M}^r_{k+\ell}(\Gamma(N))$ and $F_2\in \mathcal{A}\mathcal{M}^r_{\ell}(\Gamma(N))$ with $\ell\geq 1$, we see, by Lemma \ref{Lem:CM_values_E} and \eqref{E:algebraicity}, that 
    \[
        F(\bomega)=\frac{F_1(\bomega)}{F_2(\bomega)}=(E_{\bu}^{\mathrm{ari}}(\bomega))^k\frac{\frac{F_1(\bomega)}{(E_{\bu}^{\mathrm{ari}}(\bomega))^{k+\ell}}}{\frac{F_2(\bomega)}{(E_{\bu}^{\mathrm{ari}}(\bomega))^{\ell}}}\sim \left(\lambda_{\boldsymbol{\omega}}/\widetilde{\pi}\right)^{k},
    \]
    which concludes the proof of the theorem.
\end{proof}

\section{$t$-motivic Galois groups and algebraic independence}
    In what follows, our main purpose is to determine the transcendence degree of the field generated over $\oK$ by periods and quasi-periods of finitely many CM Drinfeld modules defined over $\oK$ under some natural assumption. We will begin with some essential properties of linear algebraic groups.

\subsection{Properties of algebraic tori}
    Let $\mathcal{K}_1$ and $\mathcal{K}_2$ be two Galois extensions over $\mathbb{F}_q(t)$ of degree $r_1\geq 2$ and $r_2\geq 2$. Assume that $\mathcal{K}_1$ and $\mathcal{K}_2$ are linearly disjoint over $\mathbb{F}_q(t)$. For each $i=1,2$,  let $G_i:=\Res_{\mathcal{K}_i/\mathbb{F}_q(t)}(\mathbb{G}_{m/\mathcal{K}_i})$ be the Weil restriction of scalars of the multiplicative group and $H_i:=\Res_{\mathcal{K}_i/\mathbb{F}_q(t)}(\mathbb{G}_{m/\mathcal{K}_i})^{(1)}$ be the norm torus inside $G_i$. That is, we have the following short exact sequence of linear algebraic groups over $\mathbb{F}_q(t)$:
        \[
            1\to H_i\hookrightarrow G_i\twoheadrightarrow \mathbb{G}_m\to 1.
        \]
        Here the map between $H_i$ and $G_i$ is the inclusion map and the map between $G_i$ and $\mathbb{G}_m$ is the determinant map which coincides with the field norm $N_{\mathcal{K}_i/\mathbb{F}_q(t)}$.
        
    To achieve our goal, we need the following extension of \cite[Prop.~3.3.1]{Cha12}.
    \begin{lemma}\label{Lem:Morphism_between_Torus}
         Any $\mathbb{F}_q(t)$-morphism (as morphisms of $\mathbb{F}_q(t)$-varieties) $f:H_1\to H_2$ is a constant map.
    \end{lemma}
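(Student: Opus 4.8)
The plan is to translate the statement into the language of character lattices of tori and to show that the only Galois-equivariant homomorphism between the relevant lattices is zero. Write $F := \FF_q(t)$, fix a separable closure $F^{\sep}$, and set $\Gamma := \Gal(F^{\sep}/F)$. Since $\mathcal{K}_1,\mathcal{K}_2$ are Galois over $F$, the subgroups $\Gamma_i := \Gal(F^{\sep}/\mathcal{K}_i)$ are normal with $\Delta_i := \Gamma/\Gamma_i = \Gal(\mathcal{K}_i/F)$ of order $r_i$; moreover both $G_i$ and $H_i$ split over $\mathcal{K}_i$, hence over $F^{\sep}$. First I would base change $f$ to $F^{\sep}$ and pass to coordinate rings: after splitting, $\mathcal{O}(H_{i,F^{\sep}}) = F^{\sep}[X^*(H_i)]$ is the group algebra of the character lattice, so that $f$ is encoded by the $F^{\sep}$-algebra homomorphism $f^* \colon F^{\sep}[X^*(H_2)] \to F^{\sep}[X^*(H_1)]$.

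The key structural input is that the units of a group algebra $F^{\sep}[L]$ of a lattice $L$ are exactly $(F^{\sep})^\times \cdot L$ (it is a Laurent polynomial ring over a field). Since each character $\chi \in X^*(H_2)$ is a unit, $f^*(\chi) = c_\chi\,\psi(\chi)$ for a scalar $c_\chi \in (F^{\sep})^\times$ and a character $\psi(\chi) \in X^*(H_1)$. Comparing monomial parts in $f^*(\chi_1\chi_2) = f^*(\chi_1)f^*(\chi_2)$ shows $\psi \colon X^*(H_2) \to X^*(H_1)$ is a homomorphism of abelian groups, and comparing monomial parts in the Galois-equivariance relation $f^*(\gamma\cdot\chi) = \gamma\cdot f^*(\chi)$ (valid because $f$ is defined over $F$) shows that $\psi$ is $\Gamma$-equivariant. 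Thus the whole problem reduces to proving $\Hom_{\ZZ[\Gamma]}(X^*(H_2), X^*(H_1)) = 0$; indeed, if $\psi = 0$ then $f^*$ sends every character, and hence all of $F^{\sep}[X^*(H_2)]$, into $F^{\sep}$, so the image of $f$ is a single point and $f$ is constant.

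To finish I would compute the lattices explicitly. From the defining sequence $1 \to H_i \to G_i \to \Gm \to 1$ and $X^*(G_i) = \ZZ[\Delta_i]$ one gets $X^*(H_i) = \ZZ[\Delta_i]/\ZZ\!\cdot\!N_i$, where $N_i := \sum_{\delta \in \Delta_i}\delta$ is the norm element; a short purity check shows this quotient is torsion-free, so $X^*(H_1)$ is a genuine lattice. Now I would exploit linear disjointness: $\mathcal{K}_1 \cap \mathcal{K}_2 = F$ translates to $\Gamma_1\Gamma_2 = \Gamma$, whence the image of $\Gamma_1$ in $\Delta_2$ is all of $\Delta_2$. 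For $\gamma \in \Gamma_1$ the equivariance gives $\psi(\gamma x) = \gamma\,\psi(x) = \psi(x)$ (as $\Gamma_1$ acts trivially on $X^*(H_1)$), so $\psi$ kills the augmentation submodule and factors through the $\Delta_2$-coinvariants $X^*(H_2)_{\Delta_2}$. A direct computation with $N_2 \mapsto |\Delta_2| = r_2$ under the augmentation gives $X^*(H_2)_{\Delta_2} \cong \ZZ/r_2\ZZ$, which is torsion; any homomorphism from it into the torsion-free lattice $X^*(H_1)$ vanishes, so $\psi = 0$.

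The main obstacle—and the place where the hypotheses must be used exactly—is this last step: one must check that linear disjointness is precisely what forces $\Gamma_1$ to act through the full group $\Delta_2$ on $X^*(H_2)$ (if $\mathcal{K}_1 = \mathcal{K}_2$ the identity map is a nonconstant morphism, so some disjointness is genuinely needed), and that the resulting coinvariants are finite rather than having a free part. The remainder is the standard dictionary between split tori and their character lattices together with the description of units in a Laurent polynomial ring; the one routine verification left implicit is the Galois-descent compatibility guaranteeing that $\psi$ is $\Gamma$-equivariant and that the scalars $c_\chi$ do not obstruct the monomial-part comparison.
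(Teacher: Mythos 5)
Your proof is correct, and it takes a genuinely different (more self-contained) route than the paper's. The paper base-changes only to $\mathcal{K}_1$: it observes that $(H_1)_{\mathcal{K}_1}$ is split, uses linear disjointness to see that $\mathcal{K}_2\otimes_{\mathbb{F}_q(t)}\mathcal{K}_1$ is the field $\mathcal{K}_1\mathcal{K}_2$, so that $(H_2)_{\mathcal{K}_1}\cong\Res_{\mathcal{K}_1\mathcal{K}_2/\mathcal{K}_1}(\mathbb{G}_{m/\mathcal{K}_1\mathcal{K}_2})^{(1)}$ is anisotropic, and then invokes Springer's Proposition 13.2.2 as a black box to conclude that a morphism of varieties from a split torus to an anisotropic torus is constant. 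You instead pass all the way to $F^{\sep}$, encode $f$ by a $\Gamma$-equivariant homomorphism $\psi$ of character lattices (plus harmless scalars, via the description of units of a Laurent polynomial ring), and kill $\psi$ by showing it factors through the coinvariants $X^*(H_2)_{\Delta_2}\cong\mathbb{Z}/r_2\mathbb{Z}$, which is torsion while $X^*(H_1)$ is torsion-free. Both arguments consume the linear-disjointness hypothesis in the same place --- it is exactly what makes $\Gamma_1$ surject onto $\Delta_2$, equivalently what keeps $\mathcal{K}_1\mathcal{K}_2/\mathcal{K}_1$ of full degree $r_2$ --- and your coinvariants computation is dual to the paper's anisotropy statement $X^*(H_2)^{\Gamma_1}=0$ (for a finite group acting on a lattice, the invariants vanish precisely when the coinvariants are finite). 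What your version buys is that the one nontrivial citation is replaced by an explicit character-lattice argument, including the point that an arbitrary morphism of varieties between tori is a translate of a group homomorphism; the cost is a slightly longer setup, all steps of which you verify correctly.
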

    
    \begin{proof}
        For each $\mathbb{F}_q(t)$-algebraic group $G$, we set $(G)_L$ to be the extension of scalars of $G$ to a finite extension $L/\mathbb{F}_q(t)$. Since $\mathcal{K}_1$ is Galois over $\mathbb{F}_q(t)$, $(G_1)_{\mathcal{K}_1}$ and $(H_1)_{\mathcal{K}_1}$ both are split. We claim that $(H_2)_{\mathcal{K}_1}$ is anisotropic. Since $\mathcal{K}_2/\mathbb{F}_q(t)$ is finite and separable, we may write $\mathcal{K}_2=\mathbb{F}_q(t)(\beta)$ for some $\beta\in\mathcal{K}_2$. If we set $f_\beta(X)\in \mathbb{F}_q(t)[X]$ to be the minimal polynomial of $\beta$ over $\mathbb{F}_q(t)$, then
        \[
            \mathcal{K}_2\cong \mathbb{F}_q(t)[X]/(f_\beta(X))
        \]
        and
        \begin{align*}
            \mathcal{K}_2\otimes_{\mathbb{F}_q(t)}\mathcal{K}_1\cong \mathcal{K}_1[X]/(f_\beta(X))\cong \mathcal{K}_1[X]/(f_{\beta,1}(X))\times\cdots\times \mathcal{K}_1[X]/(f_{\beta,m}(X))
        \end{align*}
        where $f_\beta(X)=\prod_{i=1}^mf_{\beta,i}(X)\in\mathcal{K}_1[X]$ and $f_{\beta,i}(X)$ is irreducible in $\mathcal{K}_1[X]$ for $1\leq i \leq m$. Since $\mathcal{K}_1$ and $\mathcal{K}_2$ are linearly disjoint over $\mathbb{F}_q(t)$, $\mathcal{K}_2\otimes_{\mathbb{F}_q(t)}\mathcal{K}_1$ is isomorphic to the composite of fields $\mathcal{K}_1\mathcal{K}_2$. It implies that $m=1$ and $f_{\beta}(X)$ remains irreducible in $\mathcal{K}_1[X]$. It follows that $(G_2)_{\mathcal{K}_1}\cong\Res_{\mathcal{K}_1\mathcal{K}_2}(\mathbb{G}_{m/\mathcal{K}_1\mathcal{K}_2})$ and $(H_2)_{\mathcal{K}_1}\cong\Res_{\mathcal{K}_1\mathcal{K}_2/\mathcal{K}_1}(\mathbb{G}_{m/\mathcal{K}_1\mathcal{K}_2})^{(1)}$ is anisotropic as desired.
        
        
        Now, we consider any $k$-morphism $f:H_1\to H_2$. Then the extension of scalars induces the map
        \[
            f\otimes_k\mathcal{K}_1:(H_1)_{\mathcal{K}_1}\cong\prod\mathbb{G}_{m/\mathcal{K}_1}\to (H_2)_{\mathcal{K}_1}\cong\Res_{\mathcal{K}_1\mathcal{K}_2/\mathcal{K}_1}(\mathbb{G}_{m/\mathcal{K}_1\mathcal{K}_2})^{(1)}.
        \]
        By \cite[Prop.~13.2.2]{Spr98}, we conclude that $f$ must be a constant map.
    \end{proof}
    
    In what follows, for each $1\leq i \leq n$, let $\mathcal{K}_i$ be a Galois extension over $\mathbb{F}_q(t)$ of degree $r_i\geq 2$. Moreover, let $G_i:=\Res_{\mathcal{K}_i/\mathbb{F}_q(t)}(\mathbb{G}_{m/\mathcal{K}_i})$ and $H_i:=\Res_{\mathcal{K}_i/\mathbb{F}_q(t)}(\mathbb{G}_{m/\mathcal{K}_i})^{(1)}$. We set
    \[
        T_n:=\{(\gamma_1,\dots,\gamma_n)\in G_1\times\dots \times G_n\mid \det(\gamma_i)=\det(\gamma_j)\}.
    \]
    Then $T_n$ defines an $\mathbb{F}_q(t)$-torus of dimension $\dim(T_n)=(r_1+\cdots+r_n)-(n-1)$.
    Let 
    \[
        \pi_{[n]}:G_1\times\cdots\times G_n\twoheadrightarrow G_1\times\cdots\times G_{n-1}
    \]
    be the natural projection onto the first $n-1$ copies and let
    \[
        \pi_{n}:G_1\times\cdots\times G_n\twoheadrightarrow G_n
    \]
    be the natural projection onto $G_n$. Note that $\pi_{[n]}:T_n\to T_{n-1}$ is surjective. To show this, for a given element $(\gamma_1,\dots,\gamma_{n-1})\in T_{n-1}$, we first let $D:=\det(\gamma_1)=\cdots=\det(\gamma_{n-1})\in\mathbb{G}_m$. Then, since $\det:G_n\to\mathbb{G}_m$ is surjective, there is $\gamma_n\in G_n$ such that $\det(\gamma_n)=D$. Therefore, $(\gamma_1,\dots,\gamma_n)\in T_n$ and $\pi_{[n]}(\gamma_1,\dots,\gamma_n)=(\gamma_1,\dots,\gamma_{n-1})$.
    
    The next theorem generalizes \cite[Thm.~3.3.2]{Cha12}.
    
    \begin{theorem}\label{Thm:Galois}
        Let $n\geq 2$ and assume that $\mathcal{K}_1,\dots,\mathcal{K}_n$ are pairwise linearly disjoint over $\mathbb{F}_q(t)$.
        Suppose that $\mathcal{G}\subset T_n$  is an $\mathbb{F}_q(t)$-subtorus such that $\pi_{[n]}(\mathcal{G})=T_{n-1}$ and $\pi_n(\mathcal{G})=G_n$. Then we have $\mathcal{G}=T_n$.
    \end{theorem}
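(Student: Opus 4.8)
The plan is to reduce the statement to its ``norm-one'' part, where the problem becomes a clean question about subtori of a product of anisotropic and split tori. Since $\mathcal{G}\subseteq T_n$ is a connected subtorus, it suffices to prove $\dim\mathcal{G}=\dim T_n$. Writing $K:=\Ker(\pi_{[n]}|_{\mathcal{G}})$, the hypothesis $\pi_{[n]}(\mathcal{G})=T_{n-1}$ gives $\dim\mathcal{G}=\dim T_{n-1}+\dim K$, and since $\Ker(\pi_{[n]}|_{T_n})=\{1\}^{n-1}\times H_n$ we have $K\subseteq H_n$. As $\dim T_n=\dim T_{n-1}+\dim H_n$, the whole theorem comes down to showing that $K=H_n$.

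To isolate $K$ I would pass to the common-determinant homomorphism $\det\colon T_n\to\mathbb{G}_m$, $(\gamma_1,\dots,\gamma_n)\mapsto\det(\gamma_1)$, and set $\mathcal{H}:=(\Ker(\det|_{\mathcal{G}}))^{\circ}$. Because $\det\circ\pi_n$ is surjective on $\mathcal{G}$ (using $\pi_n(\mathcal{G})=G_n$ and surjectivity of $\det\colon G_n\to\mathbb{G}_m$), the map $\det|_{\mathcal{G}}$ is surjective and $\dim\mathcal{H}=\dim\mathcal{G}-1$. Now $\mathcal{H}\subseteq H_1\times\cdots\times H_n$, and a short dimension count shows $\Ker(\pi_{[n]}|_{\mathcal{H}})=K$ and $\Ker(\pi_n|_{\mathcal{H}})=(\Ker(\pi_n|_{\mathcal{G}}))^{\circ}$; feeding in $\dim\mathcal{G}=\dim T_{n-1}+\dim K$ and $\dim\Ker(\pi_n|_{\mathcal{G}})=\dim\mathcal{G}-r_n$ yields $\pi_{[n]}(\mathcal{H})=H_1\times\cdots\times H_{n-1}$ and $\pi_n(\mathcal{H})=H_n$. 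Thus $\mathcal{H}$ is a subtorus of $A\times B$ with $A:=H_1\times\cdots\times H_{n-1}$ and $B:=H_n$ that surjects onto each factor, and $K$ is exactly the ``$B$-kernel'' $\{b\in B:(1,b)\in\mathcal{H}\}$.

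The main point is then to show, via Goursat's lemma for tori, that such an $\mathcal{H}$ must be all of $A\times B$. By Goursat, $\mathcal{H}$ determines an isomorphism $Q:=A/M\iso B/K$ for some subtorus $M\trianglelefteq A$, and $\mathcal{H}=A\times B$ precisely when the common quotient $Q$ is trivial. To see $Q$ is trivial I would extend scalars to $\mathcal{K}_n$, exactly as in the proof of Lemma~\ref{Lem:Morphism_between_Torus}: for $i\neq n$ the linear disjointness of $\mathcal{K}_i$ and $\mathcal{K}_n$ makes $\mathcal{K}_i\mathcal{K}_n/\mathcal{K}_n$ a field extension of degree $r_i$, so $(H_i)_{\mathcal{K}_n}\cong\Res_{\mathcal{K}_i\mathcal{K}_n/\mathcal{K}_n}(\mathbb{G}_m)^{(1)}$ is anisotropic, whence $(A)_{\mathcal{K}_n}$ is anisotropic as a product of anisotropic tori; on the other hand $\mathcal{K}_n/\mathbb{F}_q(t)$ is Galois, so $(B)_{\mathcal{K}_n}=(H_n)_{\mathcal{K}_n}$ is split. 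Since a quotient of an anisotropic torus is anisotropic and a quotient of a split torus is split (cf. \cite{Spr98}), $(Q)_{\mathcal{K}_n}$ is simultaneously anisotropic and split, hence trivial; therefore $Q$ is trivial.

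Consequently $M=A$ and $K=B=H_n$, so $\dim\mathcal{G}=\dim T_{n-1}+\dim H_n=\dim T_n$ and $\mathcal{G}=T_n$. I expect the only delicate step to be the anisotropy argument of the third paragraph --- that is, correctly passing the split/anisotropic dichotomy through the base change to $\mathcal{K}_n$ and then through the quotient $Q$ --- since the surrounding reductions are routine dimension bookkeeping. The pairwise linear disjointness hypothesis enters exactly here and nowhere else, and it is precisely what forces the common quotient to collapse.
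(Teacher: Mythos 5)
Your argument is correct, and it reaches the conclusion by a genuinely different route than the paper. The paper first splits off a complementary subtorus $\mathcal{G}_{[n]}'$ with $\mathcal{G}=\mathcal{G}_{[n]}\cdot\mathcal{G}_{[n]}'$ (Springer, Prop.~13.2.3), inverts $\pi_{[n]}|_{\mathcal{G}_{[n]}'}$ up to isogeny via Ono's section $\pi_{[n]}^{\vee}$, and then uses Lemma~\ref{Lem:Morphism_between_Torus} (every $\mathbb{F}_q(t)$-morphism $H_i\to H_n$ is constant) to show that $\pi_{[n]}^{\vee}(H_1\times\cdots\times H_{n-1})$ sits inside $\Ker\pi_n$, after which a dimension count finishes the proof. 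You instead cut down to the norm-one part $\mathcal{H}=(\Ker(\det|_{\mathcal{G}}))^{\circ}\subseteq H_1\times\cdots\times H_n$, check by the same routine dimension bookkeeping that $\mathcal{H}$ surjects onto $A=H_1\times\cdots\times H_{n-1}$ and onto $B=H_n$, and then apply Goursat's lemma: the common quotient $A/M\cong B/K$ is simultaneously a quotient of a torus that becomes anisotropic over $\mathcal{K}_n$ (by pairwise linear disjointness, exactly the base-change computation in the proof of Lemma~\ref{Lem:Morphism_between_Torus}) and of one that becomes split over $\mathcal{K}_n$ (since $\mathcal{K}_n/\mathbb{F}_q(t)$ is Galois), hence is trivial, forcing $K=H_n$ and $\dim\mathcal{G}=\dim T_n$. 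The essential arithmetic input --- the split-versus-anisotropic dichotomy after base change, which is where linear disjointness enters --- is identical in both proofs; what differs is the group-theoretic packaging. Your version trades Ono's isogeny-splitting theorem and the existence of a complementary subtorus for Goursat's lemma (valid here since everything is commutative and quotients of tori by closed subgroup schemes are again tori) plus the elementary facts that quotients of anisotropic (resp.\ split) tori are anisotropic (resp.\ split); this makes the argument somewhat more self-contained and avoids the small gloss the paper needs when applying Lemma~\ref{Lem:Morphism_between_Torus} to a map out of the product $H_1\times\cdots\times H_{n-1}$ rather than a single factor. One point worth stating explicitly if you write this up: the $B$-kernel of $\mathcal{H}$ is a priori $K\cap\mathcal{H}$ rather than $K$ itself, but since Goursat gives $K\cap\mathcal{H}=B=H_n$ and $K\subseteq H_n$, the identification is harmless.
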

    
    \begin{proof}
        
        Let $\mathcal{G}_{[n]}:=\Ker\pi_{[n]}\cap\mathcal{G}$. Then $\mathcal{G}_{[n]}$ is of the form
        \begin{equation}\label{Eq:Gamma_n}
            \mathcal{G}_{[n]}=\left\{\left(
            \begin{array}{ccc|c}
                \mathbb{I}_{r_1} &  &  &\\
                & \ddots & & \\ 
                & & \mathbb{I}_{r_n} &\\ \hline
                & & & (\star)
            \end{array}\right)\mid
            (\star)\in H_n\right\}\subset\mathcal{G}
        \end{equation}
        and $\mathcal{G}_{[n]}$ forms an $\mathbb{F}_q(t)$-subtorus of $\mathcal{G}$. 
        Consequently, we have the following short exact sequence of linear algebraic groups over $\mathbb{F}_q(t)$:
        \[
            1\to \mathcal{G}_{[n]}\hookrightarrow \mathcal{G}\overset{\pi_{[n]}}{\twoheadrightarrow} T_{n-1}\to 1.
        \]
        By \cite[Prop.~13.2.3]{Spr98}, there exists an $\mathbb{F}_q(t)$-subtorus $\mathcal{G}_{[n]}'\subset\mathcal{G}$ such that $\mathcal{G}=\mathcal{G}_{[n]}\cdot\mathcal{G}_{[n]}'$ and $|\mathcal{G}_{[n]}\cap\mathcal{G}_{[n]}'|<\infty$. In particular, we have
        \[
            \dim\mathcal{G}=\dim\mathcal{G}_{[n]}+\dim\mathcal{G}_{[n]}'=\dim\mathcal{G}_{[n]}+\dim T_{n-1}.
        \]
        Thus, $\dim\mathcal{G}_{[n]}'=\dim T_{n-1}=r_1+\cdots+r_{n-1}-(n-2)$ and in fact we have $\pi_{[n]}(\mathcal{G}_{[n]}')=T_{n-1}$. Indeed, given $\mathbf{y}\in T_{n-1}$, since $\pi_{[n]}(\mathcal{G})=T_{n-1}$, there is an element $\mathbf{x}\in\mathcal{G}$ so that $\pi_{[n]}(\mathbf{x})=\mathbf{y}$. Note that $\mathcal{G}=\mathcal{G}_{[n]}\cdot\mathcal{G}_{[n]}'$. It follows that there exist $\mathbf{x}_1\in\mathcal{G}_{[n]}$ and $\mathbf{x}_2\in\mathcal{G}_{[n]}'$ satisfying $\mathbf{x}=\mathbf{x}_1\mathbf{x}_2$. By \eqref{Eq:Gamma_n}, the top-left blocks of $\mathbf{x}_1$ are given by the identity matrix. Since $\pi_{[n]}$ is the projection onto the first $n-1$ copies, we must have $\pi_{[n]}(\mathbf{x}_2)=\mathbf{y}$. This gives the desired surjectivity.
        
        Now, by \cite[Prop.~1.3.1]{Ono61}, there exists an $\mathbb{F}_q(t)$-isogeny $\pi_{[n]}^{\vee}:T_{n-1}\twoheadrightarrow\mathcal{G}_{[n]}'$ such that $\pi_{[n]}^{\vee}\circ(\pi_{[n]}\mid_{\mathcal{G}_{[n]}'})={\deg(\pi_{[n]}\mid_{\mathcal{G}_{[n]}'})}$. In particular, we have a homomorphism of linear algebraic groups $\pi_{n}\circ\pi_{[n]}^{\vee}$ which is defined over $\mathbb{F}_q(t)$. We shall mention that this map is not necessary to be surjective. We claim that $(\pi_{n}\circ\pi_{[n]}^{\vee})(H_1\times\cdots\times H_{n-1})\subset H_n$. To see this, let $\mathbf{x}=(x_1,\dots,x_{n-1})\in H_1\times\cdots\times H_{n-1}$. Since $\pi_{[n]}\mid_{\mathcal{G}_{[n]}'}$ is surjective, there exists $\mathbf{y}=(x_1,\dots,x_{n-1},y)\in\mathcal{G}_{[n]}'$ such that $\pi_{[n]}(\mathbf{y})=\mathbf{x}$. Then
        \begin{align*}
            (\pi_n\circ\pi_{[n]}^{\vee})(\mathbf{x})=\pi_n\circ(\pi_{[n]}^{\vee}\circ\pi_{[n]}\mid_{\mathcal{G}_{[n]}'})(\mathbf{y})=\pi_n(\deg(\pi_{[n]}\mid_{\mathcal{G}_{[n]}'})(\mathbf{y}))\in H_n.
        \end{align*}
        Thus, we have $(\pi_n\circ\pi_{[n]}^{\vee}):H_1\times\cdots\times H_{n-1}\to H_n$ and it must be the trivial map by Lemma~\ref{Lem:Morphism_between_Torus}.
        
        Now, we consider the short exact sequence of linear algebraic groups
        $$1\to \Ker\pi_n\hookrightarrow \mathcal{G}\overset{\pi_{n}}{\twoheadrightarrow} G_n\to 1.$$
        On the one hand, we have
        \begin{align*}
            \dim\mathcal{G}&=\dim\Ker\pi_n+\dim G_n\\
            &\geq \dim\pi_{[n]}^{\vee}(H_1\times\cdots\times H_{n-1})+r_n\\
            &=\dim (H_1\times\cdots\times H_{n-1})+r_n\\
            &=(r_1-1)+\cdots+(r_{n-1}-1)+r_n\\
            &=(r_1+\cdots+r_n)-(n-1).
        \end{align*}
        Here the first equality follows from \cite[A.99]{Mil15}, the second inequality follows from the fact that $(\pi_n\circ\pi_{[n]}^{\vee}):H_1\times\cdots\times H_{n-1}\to H_n$ is the trivial map and the third equality follows from the fact that $\pi_{[n]}^{\vee}$ is an isogeny.
        On the other hand, $\mathcal{G}\subset T_n$ implies that
        \[
            \dim\mathcal{G}\leq\dim T_n=(r_1+\cdots+r_n)-(n-1).
        \]
        Thus, we have $\dim\mathcal{G}=\dim T_n=(r_1+\cdots+r_n)-(n-1)$. Finally, the connectedness of $\mathcal{G}$ and $T_n$ implies that $\mathcal{G}=T_n$ as desired.
    \end{proof}

\subsection{Tannakian formalism for rigid analytically trivial pre-$t$-motives}
    In what follows, we give a rapid review on the $t$-motivic Galois theory developed by Papanikolas in \cite{Pap08}. Consider the twisted Laurent polynomial ring $\oK(t)[\sigma,\sigma^{-1}]$ in $\sigma$ over $\oK(t)$, subject to the relation $\sigma^nf=f^{(-n)}\sigma^n$ for any $n\in\mathbb{Z}$. A left $\oK(t)[\sigma,\sigma^{-1}]$-module $M$ is called a \emph{pre-$t$-motive} if it is finite dimensional over $\oK(t)$. For two pre-$t$-motives $M_1$ and $M_2$, we call  $f:M_1\to M_2$ \emph{a morphism of pre-$t$-motives} if $f$ is a left $\oK(t)[\sigma,\sigma^{-1}]$-module homomorphism.

    Since a pre-$t$-motive $M$ is a finite dimensional $\oK(t)$-vector space, we may set $r:=\dim_{\oK(t)}M$ and fix a choice of a $\oK(t)$-basis $\bm\in\Mat_{r\times 1}(M)$. Then there is a matrix $\Phi\in\GL_r(\oK(t))$ representing the action of $\sigma$ in the sense that $\sigma\bm=\Phi\bm$. We denote by $\LL$ the field of fractions of $\TT$. The pre-$t$-motive $M$ is called \emph{rigid analytically trivial} if there exists $\Psi\in\GL_r(\LL)$ such that $\Psi^{(-1)}=\Phi\Psi$. We may extend the $\sigma$-action on $\LL\otimes_{\oK(t)}M$ by setting $\sigma(f\otimes m):=f^{(-1)}\otimes \sigma m$, and we further define $M^B:=(\LL\otimes_{\oK(t)}M)^{\sigma}$ the $\mathbb{F}_q(t)$-subspace of $\sigma$-invariant elements in $\LL\otimes_{\oK(t)}M$. Denoting by $\mathcal{R}$ the category of rigid analytically trivial pre-$t$-motives,  by \cite[Thm.~3.3.15]{Pap08}, we see that $\mathcal{R}$ forms a neutral Tannakian category over $\mathbb{F}_q(t)$ with fiber functor $M\mapsto M^B$.

   Recall the dual $t$-motives defined in \S2.1 and let $\mathcal{AR}$ be the category of rigid analytically trivial dual $t$-motives. We define $\mathcal{T}$ to be \emph{the category of the $t$-motives} which is given by the essential image of the functor $\mathcal{M}\mapsto\oK(t)\otimes_{\oK[t]}\mathcal{M}$ from $\mathcal{AR}$ into $\mathcal{R}$.
    For a given $t$-motive $M$, we may consider $\mathcal{T}_M$, the strictly full Tannakian subcategory of $\mathcal{T}$ generated by $M$. Then, by the Tannakian duality, $\mathcal{T}_M$ is equivalent to $\mathbb{F}_q(t)$-finite dimensional representations of an affine algebraic group scheme $\Gamma_M$ defined over $\mathbb{F}_q(t)$. We call $\Gamma_M$ \emph{the $t$-motivic Galois group of $M$}. If $\mathcal{M}$ is a rigid analytically trivial dual $t$-motive and $(\Phi,\Psi)$ is a rigid analytic trivialization of $\mathcal{M}$, then, letting $M:=\oK(t)\otimes_{\oK[t]}\mathcal{M}$, we have
    \begin{equation}\label{Eq:trdeg_periods}
        \dim\Gamma_M=\trdeg_{\oK}\oK(\Psi(\theta))
    \end{equation}
    which can be regarded as a function field analogue of Grothendieck’s periods conjecture established by Papanikolas \cite[Thm.~1.1.7]{Pap08}. Note that the $t$-motivic Galois group $\Gamma_M$ has the following explicit description: We define $\widetilde{\Psi}:=\Psi_1^{-1}\Psi_2\in\GL_r(\LL\otimes_{\oK(t)}\LL)$ where $(\Psi_1)_{ij}=\Psi_{ij}\otimes 1$ and $(\Psi_2)_{ij}=1\otimes\Psi_{ij}$. Then as an affine algebraic group scheme over $\mathbb{F}_q(t)$, we have an isomorphism
    \[
        \Gamma_M\cong\Spec\mathrm{Im}(\mu_\Psi)
    \]
    where $\mu_\Psi:\mathbb{F}_q(t)[X,1\det(X)]\to\LL\otimes_{\oK(t)}\LL$ is an $\mathbb{F}_q(t)$-algebra homomorphism defined by $X_{ij}\mapsto\widetilde{\Psi}_{ij}$ with $X=(X_{ij})$ a square matrix of size $r$ in variables $X_{ij}$.

    Now, for each $1\leq i\leq n$, let $\phi_i$ be a CM Drinfeld module defined over $\oK$. Its dual $t$-motive $\mathcal{M}_i$ has a rigid analytic trivialization $(\Phi_i,\Psi_i)$. Then the direct sum   $\mathcal{M}:=\oplus_{1\leq i\leq n}\mathcal{M}_i$  has a rigid analytic trivialization $(\Phi,\Psi)$ where
    \[
        \Phi:=\begin{pmatrix}
            \Phi_1 & & & \\
             & \Phi_2 & & \\
             & & \ddots & \\
             & & & \Phi_n
        \end{pmatrix}\in\Mat_{r_1+\cdots+r_n}(\oK[t])\cap\GL_{r_1+\cdots+r_n}(\oK(t)),
    \]
    and
    \[
        \Psi:=\begin{pmatrix}
            \Psi_1 & & & \\
             & \Psi_2 & & \\
             & & \ddots & \\
             & & & \Psi_n
        \end{pmatrix}\in\GL_{r_1+\cdots+r_n}(\TT_\theta).
    \]
    Since, by \eqref{E:det}, $\det\widetilde{\Psi_i}=\Omega(t)^{-1}\otimes\Omega(t)\in\LL\otimes_{\oK(t)}\LL$, letting $M:=\oK(t)\otimes_{\oK[t]}\mathcal{M}$ and $M_i:=\oK(t)\otimes_{\oK[t]}\mathcal{M}_i$, we must have (cf. \cite[p.~202]{Cha12})
    \begin{equation}\label{Eq:Galois_1}
        \Gamma_M\subset T_n\subset \Gamma_{M_1}\times \cdots \times \Gamma_{M_n}.
    \end{equation}
     In addition, by the Tannakian theory (cf. \cite[Rem.~3.1.3]{Cha12}), the canonical projection $\pi_n:\Gamma_{M}\to\Gamma_{M_n}$ is surjective. Moreover, if we set $M_{[n]}:=\oplus_{i=1}^{n-1}M_i$ so that $M= M_{[n]}\oplus M_{n} $, then by the Tannakian theory again there is a natural surjective map from $\Gamma_M\to\Gamma_{M_{[n]}}$ which coincides with the canonical projection map $\pi_{[n]}$. Finally, if we denote by $\mathcal{K}_i:=\End(\phi_i)\otimes_{\mathbb{F}_q[t]}\mathbb{F}_q(t)$ the endomorphism algebra of $\phi_i$, then by \cite[Thm.3.5.4]{CP12}, we have
    \begin{equation}\label{Eq:Galois_2}
        \Gamma_{M_i}\cong\Res_{\mathcal{K}_i/\mathbb{F}_q(t)}(\mathbb{G}_{m/\mathcal{K}_i}).
    \end{equation}

    Now we are ready to state and prove the main result of this subsection, which generalizes \cite[Thm.~2.2.2]{Cha12}.
    \begin{theorem}\label{Thm:Algebraic_Independence}
        Let $\phi_1,\dots,\phi_n$ be CM Drinfeld modules of rank $r_1,\dots,r_n$ defined over $\oK$. Suppose that for any $1\leq i \leq n$, the endomorphism algebra $\mathcal{K}_i:=\mathbb{F}_q(t)\otimes_{\mathbb{F}_q[t]}\End(\Lambda_{\phi_i})$ is Galois over $\mathbb{F}_q(t)$ and $\mathcal{K}_i\cap\mathcal{K}_j=\mathbb{F}_q(t)$ for any $i\neq j$. Let $\oK\big(\cup_{i=1}^nP_{\phi_i}\big)$ be the field generated by the entries of $P_{\phi_i}$ for each $1\leq i\leq n$ over $\oK$. Then we have
        \[
            \trdeg_{\oK}\oK(\cup_{i=1}^nP_{\phi_i})=(r_1+\cdots+r_n)-(n-1).
        \]
        In particular, for each $1\leq i\leq n$, if we choose $0\neq\lambda_i\in\Lambda_{\phi_i}$, then
        \[
            \trdeg_{\oK}\oK(\lambda_1/\widetilde{\pi},\dots,\lambda_n/\widetilde{\pi})=n.
        \]
    \end{theorem}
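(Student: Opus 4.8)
The plan is to deduce the transcendence-degree formula from the $t$-motivic Galois theory recalled above, and then to extract the statement about $\lambda_1/\widetilde{\pi},\dots,\lambda_n/\widetilde{\pi}$ by a careful choice of transcendence basis. Write $\mathcal{M}:=\bigoplus_{i=1}^n\mathcal{M}_i$ and $M:=\oK(t)\otimes_{\oK[t]}\mathcal{M}$, with the block-diagonal trivialization $\Psi=\diag(\Psi_1,\dots,\Psi_n)$. Since the entries of $\Psi(\theta)$ are exactly those of the $\Psi_i(\theta)$ together with zeros, \eqref{E:fieldperiod} gives $\oK(\cup_{i=1}^nP_{\phi_i})=\oK(\Psi(\theta))$, so by \eqref{Eq:trdeg_periods} the first assertion is equivalent to $\dim\Gamma_M=\dim T_n=(r_1+\cdots+r_n)-(n-1)$. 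Because each $\mathcal{K}_i$ is Galois over $\mathbb{F}_q(t)$ and $\mathcal{K}_i\cap\mathcal{K}_j=\mathbb{F}_q(t)$, the fields $\mathcal{K}_1,\dots,\mathcal{K}_n$ are pairwise linearly disjoint, so Theorem~\ref{Thm:Galois} applies.

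I would prove $\dim\Gamma_M=\dim T_n$ by induction on $n$. The base case $n=1$ is immediate from \eqref{Eq:Galois_2}, since $\dim\Gamma_{M_1}=[\mathcal{K}_1:\mathbb{F}_q(t)]=r_1=\dim T_1$. For the inductive step I pass to the identity component $\Gamma_M^{\circ}$, which by \eqref{Eq:Galois_1} is an $\mathbb{F}_q(t)$-subtorus of $T_n$; this sidesteps any connectedness issue for $\Gamma_M$ itself. The projections $\pi_n(\Gamma_M)=\Gamma_{M_n}=G_n$ and $\pi_{[n]}(\Gamma_M)=\Gamma_{M_{[n]}}$ are surjections furnished by the Tannakian formalism, and since $T_{n-1}$ is connected, the induction hypothesis $\dim\Gamma_{M_{[n]}}=\dim T_{n-1}$ combined with $\Gamma_{M_{[n]}}\subseteq T_{n-1}$ forces $\pi_{[n]}(\Gamma_M)=T_{n-1}$. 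Taking identity components yields $\pi_{[n]}(\Gamma_M^{\circ})=T_{n-1}$ and $\pi_n(\Gamma_M^{\circ})=G_n$, so Theorem~\ref{Thm:Galois} gives $\Gamma_M^{\circ}=T_n$ and hence $\dim\Gamma_M=\dim T_n$, proving the first assertion.

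For the second assertion I would first record two facts about each period field $L_i:=\oK(P_{\phi_i})$. By the relations $\mathcal{L}_{ij}$ of \S2.2, $L_i=\oK\big(\lambda_i,F_{\tau}^{\phi_i}(\lambda_i),\dots,F_{\tau^{r_i-1}}^{\phi_i}(\lambda_i)\big)$, and since $\trdeg_{\oK}L_i=r_i^2/r_i=r_i$ equals the number of these generators, they form a transcendence basis of $L_i$ over $\oK$. Moreover the Legendre relation \eqref{Eq:Legendre_Relation} expresses $\widetilde{\pi}=c_{\phi_i}^{-1}\mathbf{D}\big(\lambda_i,F_{\tau}^{\phi_i}(\lambda_i),\dots,F_{\tau^{r_i-1}}^{\phi_i}(\lambda_i)\big)$, so $\widetilde{\pi}\in L_i$ for every $i$. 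The goal is to splice these bases into a single transcendence basis $S$ of $\oK(\cup_iP_{\phi_i})$, of the correct size $(r_1+\cdots+r_n)-(n-1)$, exhibiting $\lambda_1,\dots,\lambda_n$ as algebraically independent elements. Here I invoke Lemma~\ref{L:badpair}: since the hypotheses make $\phi_1,\dots,\phi_n$ pairwise non-isogenous, at most one pair $(\phi_i,\lambda_i)$ can be bad.

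If no pair is bad, then for each $i$ some $F_{\tau^{\mu_i}}^{\phi_i}(\lambda_i)$ with $\mu_i\geq 1$ is expressible by the Legendre relation, so $\{\lambda_i,\widetilde{\pi}\}\cup\{F_{\tau^j}^{\phi_i}(\lambda_i):1\leq j\leq r_i-1,\ j\neq\mu_i\}$ is again a transcendence basis of $L_i$. Gluing these along the common element $\widetilde{\pi}$ gives a set $S$ of size $1+\sum_i(r_i-1)=(r_1+\cdots+r_n)-(n-1)$ which generates $\oK(\cup_iP_{\phi_i})$ up to algebraic extension, hence is a transcendence basis; in particular $\widetilde{\pi},\lambda_1,\dots,\lambda_n\in S$ are algebraically independent. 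Since $\oK(\widetilde{\pi},\lambda_1/\widetilde{\pi},\dots,\lambda_n/\widetilde{\pi})=\oK(\widetilde{\pi},\lambda_1,\dots,\lambda_n)$ has transcendence degree $n+1$, the $n$ ratios have transcendence degree at least $n$, hence exactly $n$. If one pair, say $(\phi_1,\lambda_1)$, is bad, Lemma~\ref{L:badpair} gives $\widetilde{\pi}\sim\lambda_1^{r_1}$; I would then take $S$ to be all of $\{\lambda_1,\dots,F_{\tau^{r_1-1}}^{\phi_1}(\lambda_1)\}$ together with the truncated bases for $i\geq 2$, again of the right size and again exhibiting $\lambda_1,\dots,\lambda_n$ as independent. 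Now $\lambda_1/\widetilde{\pi}\sim\lambda_1^{1-r_1}$ recovers $\lambda_1$ algebraically, whence $\widetilde{\pi}$ and then every $\lambda_i=(\lambda_i/\widetilde{\pi})\widetilde{\pi}$ lie in the algebraic closure of $\oK(\lambda_1/\widetilde{\pi},\dots,\lambda_n/\widetilde{\pi})$, forcing transcendence degree $n$ once more. The main obstacle is this last, purely field-theoretic step: the bad-pair dichotomy must be handled separately because a bad pair destroys the independence of $\widetilde{\pi}$ from the $\lambda_i$, and the transcendence-basis surgery must be arranged so that $\lambda_1,\dots,\lambda_n$ survive as basis elements in either case.
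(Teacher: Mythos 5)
Your proof is correct and follows essentially the same route as the paper: the first assertion via the induction $\Gamma_{M_{[n]}}=T_{n-1}$ combined with the surjectivity of $\pi_{[n]}$ and $\pi_n$ and an appeal to Theorem~\ref{Thm:Galois}, and the second via Lemma~\ref{L:badpair} together with a cardinality count showing that the truncated generating set is a transcendence basis containing the periods. The only differences are cosmetic: you pass to the identity component rather than using the connectedness of $\Gamma_M$ directly, and you glue the unnormalized bases along $\widetilde{\pi}$ (with an explicit two-case bad-pair analysis) where the paper works throughout with the $\widetilde{\pi}$-normalized generators supplied by \cite[Prop.~3.1.2]{CG25}.
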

   \begin{proof}
        Using \eqref{E:fieldperiod} and the structure of the block diagonal matrix $\Psi$, we have
        \[
            \oK(\cup_{i=1}^nP_{\phi_i})=\oK(\Psi(\theta)).
        \]
        It follows from \cite[Thm.~1.1.7]{Pap08} that
        \[
            \dim\Gamma_M=\trdeg_{\oK}\oK(\Psi(\theta)).
        \]
        Thus, to prove the first assertion of the theorem, it suffices to determine $\dim\Gamma_M$. We first claim that $\Gamma_M=T_n$. We prove our claim by induction on $n$. When $n=1$, we simply have $\Gamma_{M}=\Gamma_{M_1}=T_1$ and hence the claim holds. Assume that the desired assertion holds for $n-1$, that is, we have $\Gamma_{M_{[n]}}=T_{n-1}$ where $M_{[n]}$ is as defined above. Using \eqref{Eq:Galois_1}, we have $\Gamma_M\subset T_n$. On the other hand, since $M_n$ can be identified with a $\oK[t,\sigma]$-submodule of $M$, the Tannakian theory (see also \cite[Rem. 3.1.3]{Cha12}) shows that $\pi_n(\Gamma_M)=\Gamma_{M_n}$. Moreover, as discussed after \eqref{Eq:Galois_1}, we have $\pi_{[n]}(\Gamma_{M})=\Gamma_{M_{[n]}}=T_{n-1}$ where the last equality follows from the induction hypothesis. Then by Theorem~\ref{Thm:Galois}, we have $\Gamma_M=T_n$ as desired. In particular, we have
        \[
            \dim\Gamma_M=\dim T_n=(r_1+\cdots+r_n)-(n-1).
        \]

        To prove the second part, we verify that the set
        \[
            \mathfrak{S}:=\left\{\frac{\lambda_1}{\widetilde{\pi}},\dots,\frac{\lambda_n}{\widetilde{\pi}}\right\}
        \]
        is algebraically independent over $\oK$. By \cite[Prop.~3.1.2]{CG25}, we have
        \[
            \oK(\cup_{i=1}^nP_{\phi_i})=\oK\left(\cup_{i=1}^n\left\{\frac{\lambda_{i}}{\widetilde{\pi}},\frac{F^{\phi_i}_{\tau}(\lambda_{i})}{\widetilde{\pi}},\dots,\frac{F^{\phi_i}_{\tau^{r_i-1}}(\lambda_{i})}{\widetilde{\pi}}\right\}\right).
        \]
        Since $\mathcal{K}_i$ is Galois over $\mathbb{F}_q(t)$ for each $1\leq i\leq n$ and $\mathcal{K}_i\cap\mathcal{K}_j=\mathbb{F}_q(t)$ for any $i\neq j$,  the collection $\{\mathcal{K}_i\}_{i=1}^n$ of fields is pairwise non-isomorphic and hence the collection $\{\phi_i\}_{i=1}^n$ of CM Drinfeld modules is pairwise non-isogenous (\cite[Prop. 2.1.1]{Cha12}). By Lemma \ref{L:badpair}, we see that there is at most one bad pair among $\{(\phi_1,\lambda_1),\dots,(\phi_n,\lambda_n)\}$. Without loss of generality, assume that $(\phi_1,\lambda_1)$ is the only bad pair. Thus, for each $2\leq i\leq n$, there is $1\leq j_i\leq r_i-1$ such that $F_{\tau^{j_i}}^{\phi_i}(\lambda_{i})$ is expressible by the Legendre relation for $\phi_i$. It follows that
        \begin{align*}
            \dim\Gamma_M&=\trdeg_{\oK}\oK\left(\cup_{i=1}^n\left\{\frac{\lambda_{i}}{\widetilde{\pi}},\frac{F^{\phi_i}_{\tau}(\lambda_{i})}{\widetilde{\pi}},\dots,\frac{F^{\phi_i}_{\tau^{r_i-1}}(\lambda_{i})}{\widetilde{\pi}}\right\}\right)\\
            &=\trdeg_{\oK}\oK\left(\left\{\frac{\lambda_{1}}{\widetilde{\pi}},\dots,\frac{F^{\phi_1}_{\tau^{r_1-1}}(\lambda_{1})}{\widetilde{\pi}}\right\}\cup\cup_{i=2}^n\left\{\frac{\lambda_{i}}{\widetilde{\pi}},\dots,\widehat{\frac{F^{\phi_i}_{\tau^{j_i}}(\lambda_{i})}{\widetilde{\pi}}},\dots,\frac{F^{\phi_i}_{\tau^{r_i-1}}(\lambda_{i})}{\widetilde{\pi}}\right\}\right)\\
            &=(r_1+\cdots+r_n)-(n-1).
        \end{align*}
        Here, again by the notation $\widehat{F_{\tau^j}(\lambda)}/\widetilde{\pi}$, we mean that the element $F_{\tau^j}(\lambda)/\widetilde{\pi}$ is not a generator of the field described. 
        In particular, by counting the number of generators, we see that the set
        \[
            \mathfrak{T}:=\left\{\frac{\lambda_{1}}{\widetilde{\pi}},\dots,\frac{F^{\phi_1}_{\tau^{r_1-1}}(\lambda_{1})}{\widetilde{\pi}}\right\}\cup\cup_{i=2}^n\left\{\frac{\lambda_{i}}{\widetilde{\pi}},\dots,\widehat{\frac{F^{\phi_i}_{\tau^{j_i}}(\lambda_{i})}{\widetilde{\pi}}},\dots,\frac{F^{\phi_i}_{\tau^{r_i-1}}(\lambda_{i})}{\widetilde{\pi}}\right\}
        \]
        is of cardinality $(r_1+\cdots+r_n)-(n-1)$. Hence it forms a transcendence basis of $\oK(\cup_{i=1}^nP_{\phi_i})$. As $\mathfrak{S}\subset\mathfrak{T}$, we conclude that
        the set $\mathfrak{S}$ is algebraically independent over $\oK$ as desired. This completes the proof.
    \end{proof}

    \begin{proof}[Proof of Theorem~\ref{Thm:Intro}]
        By Theorem~\ref{Thm:Special_Values_to_Periods}, we have
        \[
            F(\bomega_i)\sim\left(\frac{\lambda_{\bomega_i}}{\widetilde{\pi}}\right)^\ell
        \]
        for each $1\leq i\leq n$. Then Theorem~\ref{Thm:Algebraic_Independence} implies that
        \begin{align*}
            \trdeg_{\oK}\oK(F(\bomega_1),\dots,F(\bomega_n))=\trdeg_{\oK}\oK(\lambda_{\bomega_1}/\widetilde{\pi},\dots,\lambda_{\bomega_n}/\widetilde{\pi})=n.
        \end{align*}
        
    \end{proof}

\bibliographystyle{alpha}

\end{document}